\DeclareMathOperator{\id}{id}
\DeclareMathOperator{\dom}{dom}
\DeclareMathOperator{\ran}{ran}
\DeclareMathOperator{\re}{Re}
\DeclareMathOperator{\spa}{span}
\DeclareMathOperator{\tr}{tr}
\DeclareMathOperator{\opm}{M}
\newcommand{\her}{{\rm sa}}
\newcommand{\ve}{\varepsilon}
\newcommand{\bA}{\mathbb{A}}
\newcommand{\bP}{\mathbb{P}}
\newcommand{\N}{\mathbb{N}}
\newcommand{\Z}{\mathbb{Z}}
\newcommand{\R}{\mathbb{R}}
\newcommand{\C}{\mathbb{C}}
\newcommand{\cA}{\mathcal{A}}
\newcommand{\cB}{\mathcal{B}}
\newcommand{\cC}{\mathcal{C}}
\newcommand{\cD}{\mathcal{D}}
\newcommand{\cE}{\mathcal{E}}
\newcommand{\cF}{\mathcal{F}}
\newcommand{\cH}{\mathcal{H}}
\newcommand{\cK}{\mathcal{K}}
\newcommand{\cM}{\mathcal{M}}
\newcommand{\cN}{\mathcal{N}}
\newcommand{\ux}{\underline x}
\newcommand{\uu}{\underline t}
\newcommand{\uX}{\underline X}
\newcommand{\uY}{\underline Y}
\newcommand*{\matc}[1]{\opm_{#1}(\C)}
\newcommand*{\sos}[1]{\Sigma^2 #1}
\newcommand*{\soc}[1]{[#1,#1]}
\newcommand{\Langle}{\mathop{<}\!}
\newcommand{\Rangle}{\!\mathop{>}}
\newcommand{\mx}{\Langle \ux\Rangle}
\newcommand{\rx}{\R\!\mx}
\newcommand*{\rxd}[1]{\R\!\mx\!{}_{#1}}
\newcommand{\bi}{\R\!\Langle x_1,x_2\Rangle}
\newcommand{\bih}{\R\!\Langle x_1,x_2\Rangle\!_\her}
\newcommand{\rxh}{\R\!\mx\!{}_\her}
\newcommand{\cx}{\C\!\mx}
\newcommand{\ser}{\R\!\Langle\!\!\mx\!\!\Rangle}
\def\moverlay{\mathpalette\mov@rlay}
\def\mov@rlay#1#2{\leavevmode\vtop{
		\baselineskip\z@skip \lineskiplimit-\maxdimen
		\ialign{\hfil$#1##$\hfil\cr#2\crcr}}}
\newcommand{\plangle}{\moverlay{(\cr<}}
\newcommand{\prangle}{\moverlay{)\cr>}}
\newcommand{\rat}{\R\plangle \ux \prangle}
\newcommand{\nc}{nc\xspace}
\newcommand{\ncmeasure}{noncommutative joint distribution\xspace}
\newcommand{\ncmeasures}{noncommutative joint distributions\xspace}
\newcommand{\Ncmeasures}{Noncommutative joint distributions\xspace}
\newcommand{\integrable}{power-integrable\xspace}
\newcommand{\integ}{L^\omega(\cF,\tau)}
\newtheorem*{rep@thm}{\rep@title}
\newcommand{\newreptheorem}[2]{%
\newenvironment{rep#1}[1]{%
 \def\rep@title{#2 \ref{##1}}%
 \begin{rep@thm}}%
 {\end{rep@thm}}}
\newtheorem{thm}{Theorem}[section]
\newtheorem{lem}[thm]{Lemma}
\newtheorem{cor}[thm]{Corollary}
\newtheorem{prop}[thm]{Proposition}
\newtheorem{thmA}{Theorem}
\theoremstyle{definition}
\newtheorem{exa}[thm]{Example}
\theoremstyle{remark}
\newtheorem{rem}[thm]{Remark}
\newcommand{\mycontentsbox}{%
	{\centerline{NOT FOR PUBLICATION}
		\addtolength{\parskip}{-2.3pt}
		\tableofcontents}}
\def\enddoc@text{\ifx\@empty\@translators \else\@settranslators\fi
	\ifx\@empty\addresses \else\@setaddresses\fi
	\newpage\mycontentsbox\newpage\printindex}
\numberwithin{equation}{section}
\title[Trace-positive polynomials and the unbounded tracial moment problem]{
Globally trace-positive noncommutative polynomials
and the unbounded tracial moment problem
}
\author[I. Klep]{Igor Klep${}^1$}
\address{Igor Klep, Faculty of Mathematics and Physics, University of Ljubljana}
\email{igor.klep@fmf.uni-lj.si}
\thanks{${}^1$Supported by the Slovenian Research Agency grants J1-2453, N1-0217, J1-3004 and P1-0222.}
\author[C. Scheiderer]{Claus Scheiderer${}^2$}
\address{Claus Scheiderer, Department of Mathematics and Statistics, University of Konstanz}
\email{claus.scheiderer@uni-konstanz.de}
\thanks{${}^2$Supported by DFG grant SCHE281/10-2.}
\author[J. Vol\v{c}i\v{c}]{Jurij Vol\v{c}i\v{c}${}^3$}
\address{Jurij Vol\v{c}i\v{c}, Department of Mathematics, Drexel University}
\email{jurij.volcic@drexel.edu}
\thanks{${}^3$Supported by Villum Fonden via the Villum Young Investigator
grant (No. 37532), the National Science Foundation grant DMS-1954709, and the Slovenian Research Agency grants J1-3004 and P1-0222.}
\subjclass[2020]{Primary 13J30, 46L51, 14P99;
	Secondary 47A63, 47L60}
\date{\today}
\keywords{Noncommutative polynomial, trace, Positivstellensatz,
von Neumann algebra, noncommutative moment problem}
\begin{document}

\begin{abstract}
A noncommutative (\nc) polynomial is called (globally) trace-positive 
if its evaluation at any tuple of operators in a tracial von Neumann algebra has nonnegative trace.
Such polynomials emerge as trace inequalities in several matrix or operator variables, 
and are widespread in mathematics and physics.
This paper delivers the first Positivstellensatz for global trace positivity of \nc polynomials.
Analogously to Hilbert's 17th problem in real algebraic geometry,
trace-positive \nc polynomials are shown to be weakly sums of hermitian squares and commutators of regular \nc rational functions.
In two variables, this result is strengthened further using 
a new sum-of-squares certificate with concrete univariate denominators for nonnegative bivariate polynomials.

The trace positivity certificates in this paper 
are obtained by convex duality through solving the so-called
unbounded tracial moment problem,
which arises from noncommutative integration theory and free probability.
Given a linear functional on \nc polynomials, the tracial moment problem asks whether it is a joint distribution of integral operators affiliated with a tracial von Neumann algebra.
A counterpart to Haviland's theorem on solvability of the tracial moment problem is established.
Moreover, a variant of Carleman's condition is shown to guarantee the existence of a solution 
to the tracial moment problem.
Together with semidefinite optimization, this is then used to prove that
every trace-positive \nc polynomial admits 
an explicit approximation in the 1-norm on its coefficients
by sums of hermitian squares and commutators of \nc polynomials.
\end{abstract}

\maketitle


\section{Introduction}

Trace inequalities in several operator variables are ubiquitous in mathematics and physics. 
For example, Golden--Thompson, Lieb--Thirring inequalities and their generalizations play an important role in quantum statistical mechanics \cite{Car,SBT}.
Another source of trace inequalities is quantum information \cite{Bei,PhysRev}, where they materialize through data processing inequalities and restrictions on quantum correlations.
In operator algebras \cite{NT,FNT} and noncommutative probability \cite{GS,JZ}, they appear through H\"older, Minkowski and other inequalities in noncommutative $L^p$-spaces, as well as trace convexity of entropy.
In mathematical optimization, hierarchies of semidefinite programs based on tracial noncommutative optimization are applied to compute matrix factorization ranks \cite{GdLL}.
Finally, the recently resolved \cite{JNVWY} Connes' embedding problem has several interpretations in terms of trace inequalities \cite{Had,Rad,KS,Oza}.
This paper studies trace polynomial inequalities that are valid globally, without restrictions on the variables or their norms, in all finite von Neumann algebras. 
For the first time, 
we provide necessary and sufficient certificates (Positivstellens\"atze) for such inequalities to hold, 
obtained by solving the associated unconstrained tracial moment problem.

Let $\rx$ be the real free $*$-algebra of noncommutative (\nc) polynomials in self-adjoint variables $x_1,\dots,x_n$.
A tracial von Neumann algebra is a pair 
$(\cF,\tau)$ consisting of a finite von Neumann algebra $\cF$ and a tracial state $\tau$ on $\cF$. 
Given an \nc polynomial $f\in\rx$ and a tuple of self-adjoint operators $\uX\in\cF_\her^n$, 
we consider the evaluation $f(\uX)\in\cF$ and its trace $\tau(f(\uX))\in\C$. 
We say that $f\in\rx$ is {\bf (globally) trace-positive} if 
$\tau(f(\uX))\ge0$ for all $\uX\in\cF_\her^n$ and 
all tracial von Neumann algebras $(\cF,\tau)$.

The notion of trace positivity of \nc polynomials fits in between 
positivity (of commutative polynomials) on tuples of real numbers 
and positive semidefiniteness on tuples of operators. 
Commutative and operator positivity are both well-studied, 
the former in real algebraic geometry and moment problems \cite{CF,Mar,Sche,Schm1}, 
and the latter under the umbrella of free analysis and free convexity \cite{HM,ANT,GKVW}. 
In both cases, algebraic certificates for positivity, 
the so-called Positivstellens\"atze, are usually given in terms of sums of squares. 
For example, 
the famous resolution of Hilbert's 17th problem asserts that a commutative polynomial is nonnegative 
if and only if it is a sum of squares of {\it rational} functions. 
Analogously, an \nc polynomial is everywhere positive semidefinite if and only if it is a sum of hermitian squares of \nc polynomials \cite{Hel,McC}.
On the other hand, results on trace-positive \nc polynomials are scarcer.
Only algebraic certificates for trace positivity on domains with additional restrictions have been given so far.
In \cite{KS,KMV}, Positivstellens\"atze for trace positivity on bounded domains (in terms of Archimedean quadratic modules) were derived. 
Likewise well-understood are \nc polynomials that are trace-positive on tuples of $k\times k$ matrices for a fixed $k\in\N$ \cite{KSV}, 
which were focal in the Procesi--Schacher conjecture \cite{PS}. 
On the other hand, a characterization of \nc polynomials that are trace-positive without norm or dimension restrictions on the input has been absent (except for the single operator case,
where even positivity of polynomials in trace powers can be handled \cite{KPV1}).

Polynomial positivity is dual to moment problems \cite{CF,PV,Schm1}.
While the classical moment problem aims at determining which functionals on a polynomial ring arise from integration with respect to some measure, 
the tracial moment problem considers functionals on a free algebra that arise from noncommutative integration \cite{Nel} with respect to a trace on a von Neumann algebra.
Its $C^*$-analog pertains to relativistic quantum theory \cite{Dub}.
In free probability, a unital algebra $\cA$ with a trace $\tau$ is viewed as a noncommutative probability space, 
and the functionals on $\rx$ of the form $p\mapsto \tau(p(\uX))$ for $\uX\in\cA^n$ 
are called noncommutative laws or joint distributions of $\uX$ \cite{VDN,MS}.
\Ncmeasures given by evaluations on tuples of elements from tracial von Neumann algebras $(\cF,\tau)$ have been characterized by the solution of the bounded tracial moment problem \cite{Had,Rad}.
The special case of \ncmeasures of tuples from finite-dimensional von Neumann algebras is settled in \cite{BKP16}.
This paper solves the unbounded tracial moment problem of describing functionals arising from $\tau$ and evaluations on (possibly unbounded) operators affiliated with $\cF$.

\subsection{Main results}

The contribution of this paper is twofold: we solve the unbounded tracial moment problem, 
and derive a Positivstellensatz for trace-positive \nc polynomials. 
Before stating these results, we require some notation.
Given a tracial von Neumann algebra $(\cF,\tau)$, let $\integ$ be its $*$-algebra of \integrable operators, 
i.e., the intersection of all the $L^p$-spaces associated with $(\cF,\tau)$.
That is, $X\in \integ$ if $X$ is an operator affiliated with $\cF$, and $\tau(|X|^p)<\infty$ for all $p\in\N$.
Let $\rxh$ be the subspace of symmetric (or self-adjoint) polynomials in $\rx$.
Obvious examples of trace-positive polynomials are sums of hermitian squares and commutators of \nc polynomials; 
however, not all trace-positive polynomials are of this form. 
To remedy this, one has to replace \nc polynomials with a certain class of regular \nc rational functions.
Let $\cK\subset\rxh$ be the convex cone of all (symmetric) \nc polynomials that can be written as
sums of hermitian squares and commutators of elements in
the $*$-subalgebra
$$\R\!\Langle x_1,\dots,x_n,(1+x_1^2)^{-1},\dots,(1+x_n^2)^{-1}\Rangle$$
of the free skew field \cite{Coh,Vol}. 
For example, the following noncommutative lift of the Motzkin polynomial
$$x_2x_1^4x_2+x_2^2x_1^2x_2^2-3x_2x_1^2x_2+1$$
belongs to $\cK$ (Example \ref{ex:motzkin}) 
even though it is not a sum of hermitian squares and commutators of \nc polynomials.
The cone $\cK$ plays a central role in our first main result, 
the solution of the unbounded tracial moment problem. 
The following theorem comprises tracial analogs of Haviland's theorem 
and Carleman's condition.

\begin{thmA}\label{t:a}
Let $\varphi:\rxh\to\R$ be a linear functional with $\varphi(1)=1$.
\begin{enumerate}[(a)]
\item There exists a tracial von Neumann algebra $(\cF,\tau)$ and $\uX\in \integ^n$ such that
$\varphi(p)=\tau(p(\uX))$ for all $p\in\rxh$
if and only if
$\varphi(\cK)=\R_{\ge0}$.

\item The equivalent conditions in (a) hold if there is $M>0$ such that
$\varphi(x_j^r)\le r! M^r$ for all $j=1,\dots,n$ and even $r\in \N$. 
\end{enumerate}
\end{thmA}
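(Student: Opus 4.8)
The plan is to attack the two parts in a way mirroring the classical Haviland-plus-Carleman strategy, with the cone $\cK$ playing the role that the cone of sums of squares plays classically.

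For part (a), the direction "solution exists $\Rightarrow$ $\varphi(\cK)=\R_{\ge0}$" should be the easy half: if $\varphi(p)=\tau(p(\uX))$ with $\uX\in\integ^n$, then for any $X_j$ affiliated with $\cF$ the resolvent-type element $(1+X_j^2)^{-1}$ lies in $\cF$ and is bounded, so every element of the $*$-subalgebra $\R\!\Langle x_1,\dots,x_n,(1+x_1^2)^{-1},\dots\Rangle$ evaluates to a \integrable operator; hence hermitian squares evaluate to positive elements and commutators to trace-zero elements (one needs $\tau([A,B])=0$ for $A,B$ in the integrable $*$-algebra, which follows because $AB,BA\in L^1$), giving $\varphi(\cK)\subseteq\R_{\ge0}$, and $\varphi(1)=1$ gives the reverse inclusion of $\R_{\ge0}$. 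The substantial direction is the converse: assuming $\varphi(\cK)=\R_{\ge0}$, one builds a GNS-type representation. I would first use positivity of $\varphi$ on hermitian squares to get a positive semidefinite sesquilinear form on the $*$-subalgebra $\cR:=\R\!\Langle x_1,\dots,x_n,(1+x_1^2)^{-1},\dots\Rangle$, pass to the quotient by the kernel, and complete to a Hilbert space $\cH$ with a cyclic vector $\xi$; the variables $x_j$ act as (generally unbounded) symmetric operators, but the key point is that $(1+x_j^2)^{-1}$ acts as a \emph{bounded} positive operator, which is exactly what is needed to show the $x_j$ are essentially self-adjoint (via the standard criterion that $\mathrm{ran}(1\pm i x_j)$, or here $\mathrm{ran}(1+x_j^2)$, is dense). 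Then the von Neumann algebra $\cF$ generated by the bounded functional calculus of these self-adjoint operators together with the vector state $\tau=\langle\,\cdot\,\xi,\xi\rangle$ must be shown to be tracial; traciality on the dense $*$-subalgebra follows from $\varphi$ vanishing on commutators in $\cK$, and one extends by normality/density. Finally one checks $X_j\in\integ$, i.e. $\tau(|X_j|^p)<\infty$ for all $p$: this is where one uses that $\varphi(x_j^{2p})=\tau(X_j^{2p})<\infty$ by hypothesis (the moments are finite real numbers), together with affiliation. The main obstacle here is the unboundedness bookkeeping: ensuring the operators $X_j$ are affiliated with $\cF$, that products and the trace behave well on unbounded affiliated operators, and that $\cK$-positivity really does force essential self-adjointness rather than merely symmetry — the resolvent generators $(1+x_j^2)^{-1}$ are the device that makes this work, so the proof must exploit them carefully, presumably via a lemma showing that a symmetric operator $T$ on a dense domain with $(1+T^2)^{-1}$ extending to a bounded operator with dense range is essentially self-adjoint.

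For part (b), the plan is to show the Carleman-type bound $\varphi(x_j^r)\le r!\,M^r$ implies $\varphi(\cK)=\R_{\ge0}$, so that (a) applies. One inclusion, $\varphi(\cK)\supseteq\R_{\ge0}$, is trivial from $\varphi(1)=1$. For $\varphi(\cK)\subseteq\R_{\ge0}$, I would first observe that the hypothesis makes $\varphi$ a positive functional on hermitian squares of \emph{polynomials} (this is the sub-case where $\cR$ is replaced by $\rx$, obtained from the Hankel-matrix positivity that the moment bounds entail, via a one-variable Carleman argument applied coordinate-wise to control the GNS form), hence one gets symmetric operators $X_j$ on a common dense domain $\cD$; the moment growth condition $\varphi(x_j^{2p})\le (2p)!M^{2p}$ is precisely the classical Carleman condition $\sum (\varphi(x_j^{2p}))^{-1/2p}=\infty$, which by Nelson's analytic vectors theorem (the vectors $p(\uX)\xi$ are analytic for each $X_j$) guarantees each $X_j$ is essentially self-adjoint and, more strongly, that the $n$-tuple admits a joint spectral resolution — here one invokes Nelson's commutation theorem or the Masani-type criterion, using that $\varphi$ vanishes on commutators. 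Concretely: the growth bound is inherited by all moments of the form $\varphi(w)$ for words $w$, so the functional extends continuously enough that the GNS operators have a dense set of common analytic vectors, and then essential self-adjointness plus the commutator relations yield a tracial von Neumann algebra; the resolvents $(1+x_j^2)^{-1}$ land in $\cF$ automatically. Since $X_j\in\integ$ follows from all moments being finite (indeed dominated by $r!M^r$), we conclude by part (a) that $\varphi(\cK)=\R_{\ge0}$. The hard part in (b) is the passage from single-variable Carleman/essential self-adjointness to the \emph{joint} distribution: one must produce a single von Neumann algebra in which all $X_j$ live and $\tau$ is tracial, which requires the analytic-vector argument to be run simultaneously for the whole tuple and combined with the commutator-vanishing of $\varphi$; I expect this joint essential self-adjointness and the verification of traciality of the resulting algebra to be the technical crux.
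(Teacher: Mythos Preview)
Your outline for part (a) has the right shape but skips a genuinely nontrivial step: the functional $\varphi$ is only given on $\rxh$, yet you immediately speak of ``positivity of $\varphi$ on hermitian squares'' in the larger algebra $\cR=\cA$ and of a sesquilinear form $\langle a,b\rangle=\varphi(ab^*)$ for $a,b\in\cA$. There is no such form until $\varphi$ has been extended to $\cA_\her$, and this extension is exactly where the cone $\cK$ earns its keep. The paper handles this via a structural lemma showing $\cA_\her\subseteq\sos{\cA}+\soc{\cA}+\rxh$ (proved by an inductive normal-form reduction eliminating the $(1+x_j^2)^{-1}$), after which the Riesz extension theorem yields a functional on $\cA_\her$ nonnegative on $\sos{\cA}+\soc{\cA}$. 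Only then does the GNS construction you describe (with essential self-adjointness coming from invertibility of $x_j\pm i$ in $\cA_\C$) go through. Without this step your GNS has no inner product to start from.

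For part (b) there is a real error: you claim the Carleman-type bound ``makes $\varphi$ a positive functional on hermitian squares of polynomials'' via ``Hankel-matrix positivity that the moment bounds entail.'' This is false: bounds on the single-variable moments $\varphi(x_j^{2r})$ impose no constraint whatsoever on mixed moments like $\varphi(x_1x_2x_1x_2)$, so Hankel positivity does not follow. In the paper's precise version (Theorem~\ref{t:carleman}) the hypotheses $\varphi(pp^*)=\varphi(p^*p)\ge0$ are assumed outright alongside the growth condition; the introductory Theorem~A(b) is a compressed summary. With that extra hypothesis in hand, the paper does \emph{not} route through part (a) but runs a direct GNS on $\cx$: the growth bound makes $\vv{1}$ an analytic vector for each $X_j'$, so the exponential series $\sum\frac{(it)^k}{k!}(X_j')^k$ converges on the dense domain, defines a strongly continuous one-parameter unitary group, and Stone's theorem produces the self-adjoint $X_j$; the von Neumann algebra is generated by these unitaries, and traciality is checked on the exponentials via the series expansion. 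Your analytic-vectors intuition is correct, but the logical flow you propose (deduce $\varphi(\cK)\ge0$, then invoke (a)) is backwards---constructing the representation \emph{is} the conclusion, not a lemma toward it.
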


The first part of Theorem \ref{t:a} is proved as Theorem \ref{t:haviland},
while the second part is given in Theorem \ref{t:carleman}.
The proof combines methods and results from convexity \cite{Bar}, 
the theory of unbounded operators \cite{Schm0}, 
and noncommutative integration \cite{Nel,Tak2}.
Theorem \ref{t:a} is used to obtain a tracial Positivstellensatz for \nc polynomials.

\begin{thmA}\label{t:b}
The following are equivalent for $f\in\rxh$:
\begin{enumerate}[(i)]
\item $\tau(f(\uX))\ge0$ for all $(\cF,\tau)$ and $\uX\in \cF^n$;
\item $\tau(f(\uX))\ge0$ for all $(\cF,\tau)$ and $\uX\in \integ^n$;
\item $f$ lies in the closure of $\cK$ with respect to the finest locally convex topology on $\rxh$;
\item for every $\ve>0$ there exists $r\in\N$ such that
$$f+\ve\sum_{j=1}^n\sum_{k=0}^r \frac{1}{k!}x_j^{2k}$$
is a sum of hermitian squares and commutators in $\rx$.
\end{enumerate}
\end{thmA}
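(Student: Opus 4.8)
The plan is to run the cycle $(iii)\Rightarrow(i)$, $(i)\Rightarrow(ii)$, $(ii)\Rightarrow(iii)$, $(iii)\Rightarrow(iv)$, $(iv)\Rightarrow(i)$, so that all four conditions become equivalent; Theorem~\ref{t:a} is the engine for the two steps that touch the moment problem, and I expect $(iii)\Rightarrow(iv)$ to be the heart of the argument. Three of the implications I would settle by direct evaluation. For $(iii)\Rightarrow(i)$ and $(iii)\Rightarrow(ii)$: if $\uX$ is a tuple of self-adjoint operators in $\cF^n$ (or in $\integ^n$), then $1+X_j^2\ge1$ is invertible, so every element of $\R\Langle x_1,\dots,x_n,(1+x_1^2)^{-1},\dots,(1+x_n^2)^{-1}\Rangle$ evaluates to an element of $\cF$ (respectively of $\integ$, since $\integ$ is a $*$-algebra containing the bounded operators); hence $p\mapsto\tau(p(\uX))$ is nonnegative on $\cK$. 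As every linear functional on $\rxh$ is continuous for the finest locally convex topology, $f$ in the closure of $\cK$ forces $\tau(f(\uX))\ge0$; and $(ii)\Rightarrow(i)$ is trivial since $\cF^n\subseteq\integ^n$. For $(iv)\Rightarrow(i)$: given $\uX\in\cF^n$ and $\ve>0$, choose $r$ as in $(iv)$ and apply $\tau(\,\cdot\,(\uX))$ to $f+\ve\sum_j\sum_{k=0}^r\tfrac1{k!}x_j^{2k}$; since sums of hermitian squares have nonnegative trace and commutators have trace $0$, this gives $\tau(f(\uX))\ge-\ve\sum_j\sum_{k=0}^r\tfrac1{k!}\tau(X_j^{2k})\ge-\ve\sum_j\tau(e^{X_j^2})\ge-\ve\sum_je^{\|X_j\|^2}$, and letting $\ve\to0$ yields $(i)$.

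For $(i)\Rightarrow(ii)$ I would use spectral truncation: for $\uX\in\integ^n$ set $X_j^{(m)}:=\chi_{[-m,m]}(X_j)X_j\in\cF_\her$; since $|X_j|^p\in L^1(\cF,\tau)$ for all $p$, one has $X_j^{(m)}\to X_j$ in each $L^p(\cF,\tau)$, and the noncommutative H\"older inequality gives $\tau(w(\uX^{(m)}))\to\tau(w(\uX))$ for every monomial $w$, hence $\tau(f(\uX^{(m)}))\to\tau(f(\uX))$; each left-hand term is nonnegative by $(i)$. For $(ii)\Rightarrow(iii)$, suppose $f$ is not in the closure of $\cK$; Hahn--Banach separation (all functionals being continuous) produces a linear $\varphi\colon\rxh\to\R$ with $\varphi(\cK)\subseteq\R_{\ge0}$ and $\varphi(f)<0$. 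Since $1\in\cK$ we get $\varphi(1)\ge0$, and $\varphi(1)=0$ would force $\varphi\equiv0$ (complete the square against hermitian squares $(1+tp)^2$), contradicting $\varphi(f)<0$; rescaling gives $\varphi(1)=1$, and then $\varphi(\cK)=\R_{\ge0}$. Theorem~\ref{t:a}(a) now yields $(\cF,\tau)$ and $\uX\in\integ^n$ with $\varphi=\tau(\,\cdot\,(\uX))$, so $\tau(f(\uX))=\varphi(f)<0$ contradicts $(ii)$.

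The crux is $(iii)\Rightarrow(iv)$, where Theorem~\ref{t:a}(b) enters. Fix $\ve>0$ and suppose for contradiction that $g_r:=f+\ve\sum_j\sum_{k=0}^r\tfrac1{k!}x_j^{2k}\notin\sos{\rx}+\soc{\rx}$ for every $r$. The cone $\sos{\rx}+\soc{\rx}$ is closed for the finest locally convex topology, because its intersection with the polynomials of each bounded degree is closed (the standard closedness of bounded-degree tracial sum-of-squares cones; here one uses that a nonzero sum of hermitian squares is never a sum of commutators, so there is no degree cancellation between the two summands). Hence for each $r$ Hahn--Banach produces a linear $\varphi_r\colon\rxh\to\R$ with $\varphi_r\ge0$ on $\sos{\rx}+\soc{\rx}$ and $\varphi_r(g_r)<0$; using that every $x_j^{2k}$ is a hermitian square, one checks (again by completing squares: if all such $\varphi_r$ vanished on $\sum_j\sum_{k\le r}\tfrac1{k!}x_j^{2k}$ they would vanish on $1$ and hence on all of $\rxh$) that $\varphi_r$ may be normalised so that $\varphi_r\bigl(\sum_j\sum_{k=0}^r\tfrac1{k!}x_j^{2k}\bigr)=1$, which forces $\varphi_r(x_j^{2k})\le k!$ for all $j$ and $k\le r$, as well as $\varphi_r(1)\le\tfrac1n$ and $\varphi_r(f)<-\ve$. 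Positivity and traciality of $\varphi_r$ together with Cauchy--Schwarz bound $\varphi_r(w)$ on every fixed monomial $w$ uniformly in $r$, so along a subnet $\varphi_r\to\varphi$ pointwise on $\rxh$ with $\varphi$ linear, positive, and tracial. The estimate $|\varphi_r(f)|\le\varphi_r(1)^{1/2}\varphi_r(f^2)^{1/2}\le C\varphi_r(1)^{1/2}$ together with $\varphi_r(f)<-\ve$ forces $\varphi_r(1)\ge(\ve/C)^2$, hence $\varphi(1)>0$; rescaling to $\varphi(1)=1$ preserves positivity and traciality, keeps $\varphi(f)\le-\ve n<0$, and (since each fixed $k$ eventually satisfies $k\le r$) yields a uniform bound $\varphi(x_j^{2k})\le L\,k!$, whence $\varphi(x_j^r)\le r!\,M^r$ for all even $r$ and a suitable $M>0$. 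By Theorem~\ref{t:a}(b) (legitimately applied, $\varphi$ being a positive tracial functional with $\varphi(1)=1$) we obtain $\varphi(\cK)=\R_{\ge0}$; since $f$ lies in the closure of $\cK$ and $\varphi$ is continuous, $\varphi(f)\ge0$, contradicting $\varphi(f)<0$. Thus some $g_r\in\sos{\rx}+\soc{\rx}$, and as $\ve>0$ was arbitrary this is exactly $(iv)$.

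The main obstacle I anticipate is precisely this last implication, and the friction should sit in two places: establishing that the degree-truncated cone $\bigl(\sos{\rx}+\soc{\rx}\bigr)\cap\{\deg\le d\}$ is closed (so that finite-dimensional separation is available and closure-membership transfers correctly), and extracting a \emph{single} uniform Carleman bound $\varphi(x_j^r)\le r!\,M^r$ out of the family $(\varphi_r)$ --- which is why the $\varphi_r$ must be normalised against the truncated exponentials $\sum_{k\le r}\tfrac1{k!}x_j^{2k}$ and why the degeneration $\varphi_r(1)\to0$ must be ruled out. Everything else is routine once Theorem~\ref{t:a} is in hand.
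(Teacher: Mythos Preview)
Your proof is correct. The implications $(iii)\Rightarrow(i)$, $(iv)\Rightarrow(i)$, $(i)\Rightarrow(ii)$ (via $L^p$-approximation of unbounded by bounded operators), and $(ii)\Rightarrow(iii)$ (Hahn--Banach plus Theorem~\ref{t:a}(a)) match the paper's arguments in Theorems~\ref{t:posss} and~\ref{t:lass} essentially verbatim. The substantive difference lies in $(iii)\Rightarrow(iv)$. The paper (proving the equivalent $(i)\Rightarrow(iv)$ in Theorem~\ref{t:lass}) introduces a pair of dual semidefinite programs $Q_{r,M}$, $Q^\vee_{r,M}$ depending on an auxiliary parameter $M$, establishes strong duality, uses a Banach--Alaoglu compactness argument together with Theorem~\ref{t:a}(b) to show $\inf Q_{r,M}\nearrow f_M\ge f_\star$ (Lemma~\ref{l:upper}), and then reads off a certificate $f+\lambda_M\Omega_{r_M}\in\Theta^2_{2r_M}$ with $\lambda_M\to0$ by evaluating the dual optimality relation at the origin. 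You bypass the SDP framework entirely: you separate each $g_r=f+\ve\Omega_r$ directly from the closed cone $\sos{\rx}+\soc{\rx}$, and your key device is to normalise the separating functional by $\varphi_r(\Omega_r)=1$ rather than $\varphi_r(1)=1$, which delivers the Carleman bounds $\varphi_r(x_j^{2k})\le k!$ immediately, at the cost of having to rule out the degeneration $\varphi(1)=0$ after passing to the limit. Your route is more elementary and self-contained; the paper's formulation, by contrast, makes the optimisation content explicit and yields the computational hierarchy~\eqref{e:sdp2} as a by-product.
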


See Theorems \ref{t:posss} and \ref{t:lass} below for the proof. 
In addition to the aforementioned mathematics areas, 
techniques from polynomial and semidefinite optimization \cite{Las06,BKP16} are applied in the proof of Theorem \ref{t:b}.

The negative answer to Connes' embedding problem \cite{JNVWY} implies that in general, one cannot restrict (i) in Theorem \ref{t:b} to finite-dimensional von Neumann algebras (Proposition \ref{p:cec}).
Nevertheless, it suffices to consider only II\textsubscript{1} factors in Theorem \ref{t:b}(i).
On the other hand, trace polynomial inequalities that are valid in all finite von Neumann algebras 
(and are described by Theorem \ref{t:b}) do not necessarily hold in all formal tracial algebras \cite{NT}.

Alas, the cone $\cK$ is not closed in general (Proposition \ref{p:notclosed}). 
However, the statement of Theorem \ref{t:b} can be improved 
for a class of bivariate \nc polynomials.
We say that $f\in\bi$ is {\it cyclically sorted} if it is a linear combination of cyclic permutations of products of the form $x_1^ix_2^j$. It turns out (Corollary \ref{c:cyc}) that cyclically sorted \nc polynomials are trace-positive precisely when they belong to $\cK$. 
This statement is a consequence of the following new Positivstellensatz for commutative polynomials.

\begin{thmA}\label{t:c}
If $f\in\R[x,y]$ is nonnegative on $\R^2$ then there exists $k\in\N$ such that $(1+x^2)^k f$ is a sum of squares in $\R[x,y]$.
\end{thmA}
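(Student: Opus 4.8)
The plan is to fix $f\in\R[x,y]$ nonnegative on $\R^2$ and view it as a polynomial in $y$ with coefficients in $\R[x]$. Write $f=\sum_{i=0}^{2d} a_i(x)\,y^i$; nonnegativity on $\R^2$ forces the leading coefficient $a_{2d}(x)$ to be nonnegative on $\R$ and the degree in $y$ to be even for every fixed $x$ outside a finite set, hence $2d$ globally. The key idea is to homogenize the $y$-variable only: substitute $y=z/(1+x^2)^{??}$ or, more directly, multiply by a suitable power of $1+x^2$ so that the ``new leading coefficient'' in $y$ becomes a square. First I would reduce to the case where $a_{2d}(x)$ is a sum of squares in $\R[x]$ (automatic, since a univariate nonnegative polynomial is a sum of two squares) and then, after multiplying by $1+x^2$ enough times, arrange that $a_{2d}(x)=b(x)^2$ is an actual square in $\R[x]$ — this uses that $1+x^2$ is (up to scaling) the product of the conjugate linear factors that can obstruct a nonnegative univariate polynomial from being a perfect square, so finitely many factors of $1+x^2$ suffice. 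The hard part is then to complete the ``square in $y$'' uniformly.

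Concretely, after this reduction one can write, over the field $\R(x)$, a sum-of-squares decomposition of $f$ as a quadratic-in-$y$-like expression: since $f(x,\cdot)\ge0$ for all real $x$ and is a polynomial in $y$ of even degree $2d$ with square leading coefficient, I would invoke the classical fact that a univariate polynomial over an ordered field that is nonnegative at all points of the field is a sum of two squares in the field — applied to $\R(x)$ this gives $f=p_1^2+p_2^2$ with $p_1,p_2\in\R(x)[y]$. The denominators appearing in $p_1,p_2$ are univariate polynomials in $x$; the crucial claim is that these denominators can be taken to be powers of $1+x^2$. This is where the specificity of $\R^2$ (as opposed to a general real curve) enters: the ``bad'' values of $x$ where a denominator could vanish are exactly the real zeros of that denominator, but $f$ being globally nonnegative controls the behavior there, and — after the preliminary multiplications — one shows the denominators have no real zeros, hence (being univariate, monic up to sign, nonnegative) are products of factors $x^2+bx+c$ with negative discriminant, each of which divides some power of $1+x^2$ after an affine change of variable... but affine changes in $x$ do not preserve $1+x^2$, so instead one argues that any positive-definite univariate $q(x)$ divides $(1+x^2)^k$ in the ring $\R[x]$ localized appropriately — which is false in $\R[x]$ itself.

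The correct route, and the one I expect the authors take, is therefore more hands-on: induct on $\deg_y f$. If $\deg_y f=0$ the statement reduces to Hilbert's theorem that a nonnegative univariate polynomial is a sum of (two) squares, no factor of $1+x^2$ needed. For the inductive step, complete the square in $y$ formally: $f=a_{2d}(x)\bigl(y^d+\tfrac{a_{2d-1}}{2a_{2d}}y^{d-1}+\cdots\bigr)^2+g$ where $g$ has lower degree in $y$ and denominators that are powers of $a_{2d}(x)$. Multiply through by a high power of $(1+x^2)$ to clear $a_{2d}$ from denominators — using that after the initial reduction $a_{2d}=b^2$ and that $b^2 \mid (1+x^2)^{N}\cdot(\text{SOS})$ is arranged — reducing to a new polynomial $\tilde g(x,y)$ of strictly smaller $y$-degree which is still nonnegative on $\R^2$ wherever $a_{2d}\ne0$, hence (by continuity and density of such $x$) nonnegative on all of $\R^2$; apply the inductive hypothesis to $\tilde g$. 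The main obstacle is the bookkeeping that makes ``clear the denominators $a_{2d}$ using only powers of $1+x^2$'' legitimate: one needs $a_{2d}(x)$, after the preliminary multiplications, to be a \emph{unit times a power of $1+x^2$ in the localization of $\R[x]$ at the multiplicative set generated by $1+x^2$}, equivalently that $a_{2d}$ has no real roots and no roots on the imaginary axis other than $\pm i$ — which one can force by first multiplying $f$ by $(1+x^2)^m$ to push all real roots of $a_{2d}$ off the real line and then absorbing the remaining complex-conjugate factors into the square $b(x)^2$. Handling the interplay between ``making $a_{2d}$ a square'' and ``making $a_{2d}$ a power of $1+x^2$'' simultaneously, and verifying that the induction terminates with a bounded exponent $k$, is the technical heart of the argument.
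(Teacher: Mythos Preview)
Your inductive strategy has a genuine gap at its core. Completing the square in $y$ produces denominators that are powers of the leading coefficient $a_{2d}(x)$, and your scheme requires $a_{2d}$ to become a unit in $\R[x,(1+x^2)^{-1}]$ after multiplying $f$ by some $(1+x^2)^m$. But multiplying $f$ by $(1+x^2)^m$ simply replaces $a_{2d}$ by $(1+x^2)^m a_{2d}$: this neither removes real zeros of $a_{2d}$ (take $f=x^2y^2+1$, where $a_2=x^2$ vanishes at $0$) nor turns a generic positive-definite factor into a power of $1+x^2$ (take $a_{2d}=1+(x-1)^2$, which shares no complex root with $1+x^2$ and hence is never a unit in the localization, no matter how large $m$ is). Likewise, $(1+x^2)^m a_{2d}$ is a perfect square in $\R[x]$ only if every irreducible quadratic factor of $a_{2d}$ other than $1+x^2$ already occurs to even multiplicity, so ``arrange that $a_{2d}=b^2$ by multiplying by powers of $1+x^2$'' is not generally possible either. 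The step ``clear the denominators $a_{2d}$ using only powers of $1+x^2$'' therefore fails, and with it the induction; you yourself flag several of these obstructions mid-argument without resolving them.

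The paper's proof is entirely different and geometric. It identifies the localization $A=\R[x,y]_{1+x^2}$ with the coordinate ring of $Y\times\bA^1$, where $Y$ is $\bP^1$ minus one real point and two complex conjugate points; since $\bP^1$ minus two conjugate points is the affine circle $S=\{u^2+v^2=1\}$, one gets $A\cong\R[S\times\bA^1]_q$ for some $q\in\R[S]$ with a single (real) zero. The question thus reduces to: is every nonnegative regular function on the cylinder $S\times\bA^1$ a sum of squares in $\R[S\times\bA^1]$? This is exactly the Scheiderer--Wenzel theorem, which is quoted as a black box. No induction on $\deg_y f$ and no manipulation of leading coefficients is involved; the special denominator $1+x^2$ enters only through the identification of varieties.
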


The proof (see Theorem \ref{t:bivar}) relies on real algebraic geometry of affine surfaces \cite{Sche}.
Theorem \ref{t:c} is a strenghtened solution of Hilbert's 17th problem for bivariate polynomials, 
since only rational squares with uniform univariate denominators are needed.

\subsection*{Acknowledgments} The authors thank Narutaka Ozawa for sharing his insights on matricial microstates, 
and the anonymous referee for 
their clarifying remarks on unbounded operator algebras, and 
showing us an alternative proof of Proposition \ref{t:ag}.

\section{Preliminaries}\label{sec2}

In this section we review the terminology and notation on von Neumann algebras, their affiliated operators, \nc polynomials and rational functions that are used throughout the paper.

\subsection{Affiliated and \integrable operators}

A {\it tracial von Neumann algebra} is a pair $(\cF,\tau)$ of
a finite von Neumann algebra $\cF$ with a separable predual and
a faithful normal tracial state $\tau$ on $\cF$.
Suppose $\cF$ acts on a Hilbert space $\cH$; then a closed and densely defined operator $X$ on $\cH$ is {\it affiliated with} $\cF$ if it commutes with every unitary operator in the commutant of $\cF$ in $\cB(\cH)$.

Next we review selected notions from noncommutative integration theory, following \cite{Nel,Tak2}.
For $p\in[1,\infty)$, the {\it noncommutative $L^p$-space} $L^p(\cF,\tau)$ is the completion of $\cF$ with respect to the norm $X\mapsto\tau(|X|^p)^{1/p}$ \cite[Section 3]{Nel}.
Note that $L^2(\cF,\tau)$ is a Hilbert space, and $\cF$ acts on it via the left regular representation.
Since $\tau$ is finite,
operators on $L^2(\cF,\tau)$ affiliated with $\cF$ form a $*$-algebra \cite[Section 2]{Nel}: the sum/product of closed
affiliated operators are understood as the closure of the sum/product as unbounded operators; see also \cite[Section IX.2]{Tak2}.
The sequence $(L^p(\cF,\tau))_{p\in[1,\infty)}$ can then be naturally seen as a decreasing net of nested subspaces in this algebra of operators affiliated with $\cF$ \cite[Theorem 5]{Nel}.
H\"older's inequality for noncommutative $L^p$-spaces \cite[Theorem IX.2.13(iv)]{Tak2}
states that
\begin{equation}\label{e:holder}
\big|\tau(Z_1\cdots Z_n)\big|\le \|Z_1\|_{p_1}\cdots\|Z_n\|_{p_n}
\end{equation}
for $Z_j\in L^{p_j}(\cF,\tau)$ and $\frac{1}{p_1}+\cdots+\frac{1}{p_n}=1$.
Therefore
$$\integ:=\bigcap_{p\in [1,\infty)}L^p(\cF,\tau)$$
is a $*$-algebra, and $\tau$ extends to a tracial state on $\integ$.
The algebra $\integ$ was introduced in \cite[Section 3]{Ino},
and its elements are {\it \integrable operators} affiliated with $(\cF,\tau)$.
For example,
if $\cF=L^\infty([0,1])$ and $\tau$ is the integration with respect to the Lebesgue measure on $[0,1]$,
then $\log(t) \in \integ\setminus\cF$ and $\tau(|\log(t)|^p)=p!$.

\subsection{Noncommutative polynomials and rational functions}

Let $\ux=(x_1,\dots,x_n)$ be a tuple of freely noncommuting variables, and let $\mx$ be the free monoid of words in $\ux$. Let $\rx$ be the real free $*$-algebra of {\it \nc polynomials} over $\ux$, with the involution given by $x_j^*=x_j$ for $j=1,\dots,n$.
For $d\in\N$ let $\rxd{d}\subset\rx$ be the subspace of \nc polynomials of degree at most $d$.
The universal skew field of fractions of $\rx$ is the free skew $*$-field $\rat$ (see e.g. \cite{Coh,KPV,Vol}).
In this paper, we consider the following $*$-subalgebra of $\rat$:
$$\cA:=\R\!\Langle x_1,\dots,x_n,(1+x_1^2)^{-1},\dots,(1+x_n^2)^{-1}\Rangle.$$
Alternatively, $\cA$ can be viewed as the free product of $n$ copies of $\R[t,\frac{1}{1+t^2}]$.
For $d\in\N$ let $\cA_d\subset\cA$ denote the subspace of elements that are linear combinations of products in $x_1,\dots,x_n,(1+x_1^2)^{-1},\dots,(1+x_n^2)^{-1}$ of length at most $d$.

If $\uX$ is a tuple of (possibly unbounded) self-adjoint operators affiliated with a finite von Neumann algebra $\cF$, then the evaluation
$$a(\uX)=a(X_1,\dots,X_n,(I+X_1^2)^{-1},\dots,(I+X_n^2)^{-1})$$
is well-defined for every $a\in\cA$ because the affiliated operators form an algebra and $(I+X)^{-1}$ is a bounded operator for a self-adjoint $X$.
Furthermore, if $\uX\in\integ^n$ then $\tau(a(\uX))$ is well-defined. 
More precisely, if $a\in\cA_p$ then
$\tau(a(\uX))$ is well-defined for every $\uX\in L^p(\cF,\tau)^n$ by H\"older's inequality \eqref{e:holder}.

\section{Sums of hermitian squares with denominators, and the cone \texorpdfstring{$\cK$}{K}}\label{sec3}

This section is devoted to the introduction and first properties of the convex cone $\cK$ (see \eqref{e:thecone} below) that is essential for the moment problem and positivity certificates of this paper.

Given a $*$-algebra $\cB$ let
\begin{align*}
\cB_\her &= \{b\in\cB\colon b^*=b \},\\
\sos{\cB} &=\left\{\sum_i b_ib_i^*\colon b_i\in\cB \right\}, \\
\soc{\cB} &=\spa\left\{b_1b_2-b_2b_1\colon b_1,b_2\in\cB \right\}.
\end{align*}

The following lemma lists relations between the above convex cones and subspaces
in the $*$-algebras $\rx$ and $\cA$.

\begin{lem}\label{l:cones}
Under the natural embedding $\rx\subset\cA$,
\begin{enumerate}[(a)]
\item $\soc{\cA}\cap\rx=\soc{\rx}$;
\item $\sos{\cA}\cap\rx=\sos{\rx}$;
\item $\sos{\cA}\cap\soc{\cA}=\{0\}$;
\item $(\sos{\cA}+\soc{\cA})\cap \rx\supsetneq\sos{\rx}+\soc{\rx}$ for $n\ge2$;
\item$(1+x_j^2)^{-m},1-(1+x_j^2)^{-m} \in\sos{\cA}$ for all $m\in\N$.
\end{enumerate}
\end{lem}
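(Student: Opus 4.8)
The plan is to handle the five items largely independently, with (e) being the computational heart and (d) being the only part requiring a genuinely nontrivial idea.

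For (a) and (b), the key observation is that $\cA$ carries a natural $\N_0$-grading-like filtration, but more useful is a \emph{linear} direct sum decomposition of $\cA$ as a vector space that restricts nicely to $\rx$. Concretely, I would exploit the free-product structure: $\cA$ is the free product of the $n$ copies $\R[x_j,(1+x_j^2)^{-1}]$, and each factor decomposes as $\R[x_j]\oplus (1+x_j^2)^{-1}\R[x_j]\cdot(\text{stuff})$, i.e. the one-variable algebra $\R[t,(1+t^2)^{-1}]$ is the vector-space direct sum of $\R[t]$ and $(1+t^2)^{-1}\R[t]_{\le 1}\cdot\big(\text{span of }(1+t^2)^{-m},\ m\ge1\big)$ — more precisely $\R[t,\tfrac1{1+t^2}] = \R[t] \oplus \bigoplus_{m\ge 1}\big(\R\cdot(1+t^2)^{-m} \oplus \R\,t\,(1+t^2)^{-m}\big)$. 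Passing to the free product, one gets a vector-space basis of $\cA$ indexed by alternating words, in which the sub-basis of words with no inverse letters spans exactly $\rx$. The involution and the commutator/square operations respect this, so a square or a commutator of an element of $\cA$ that happens to land in $\rx$ must already be a square/commutator of elements of $\rx$ once one projects onto the $\rx$-component; I would make this precise by a projection/splitting argument. Part (c) then follows from (a),(b) together with the classical fact (already true in $\rx$, cf.\ \cite{KS}) that $\sos{\cB}\cap\soc{\cB}=\{0\}$ for a $*$-algebra $\cB$ admitting a faithful tracial state with a positive GNS form — here one applies a positive trace (e.g.\ evaluation at $\uX=0$ composed with $\tau$, or a generic matricial trace) to see that a nonzero sum of hermitian squares has strictly positive trace while every commutator has trace zero.

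For (e), I would give the explicit one-variable identity and then invoke that evaluation at $x_j$ embeds $\R[t,(1+t^2)^{-1}]$ into $\cA$. Write $u=(1+x_j^2)^{-1}$. Then $u = u\cdot 1\cdot u + u\,x_j^2\,u = u(1+x_j^2)u$, wait — more cleanly, $u$ commutes with $x_j$, so $u = u^2 + (ux_j)(ux_j)^* = u^2 + u x_j^2 u$, exhibiting $u\in\sos{\cA}$; inductively $u^m = u^{m+1} + (u^{(m+1)/2}x_j)(\cdots)$ — better to write $u^m = u^{2m}(1+x_j^2)^m$ and expand $(1+x_j^2)^m$ by the binomial theorem into a sum of the squares $(u^m x_j^k)(u^m x_j^k)^*$, since everything here commutes. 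That gives $(1+x_j^2)^{-m}\in\sos{\cA}$. For $1-(1+x_j^2)^{-m}$, note $1-u^m = (1-u)(1+u+\cdots+u^{m-1})$ and $1-u = x_j^2 u = (x_j u^{1/2})(\cdots)$ — since $u$ is not literally a square in $\cA$ I would instead use $1-u^m = \sum$ of terms, each a product of commuting hermitian squares; concretely $(1-u^{2})=x_j^2u^2\cdot(1+x_j^2)\cdot u^0$… The clean route: in the commutative algebra $\R[x_j, u]$ with $u(1+x_j^2)=1$ and $u\succeq 0$, both $u^m$ and $1-u^m$ are clearly nonnegative polynomials in the single real variable obtained after substitution, hence (being in one commuting variable) are sums of squares of elements of $\R[x_j,u]=\R[t,(1+t^2)^{-1}]$, and this sub-$*$-algebra sits inside $\cA$ with its squares being hermitian squares in $\cA$. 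I expect item (e) to be routine once phrased this way.

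The genuine obstacle is part (d): showing the containment $(\sos{\cA}+\soc{\cA})\cap\rx \supsetneq \sos{\rx}+\soc{\rx}$ is \emph{strict} for $n\ge 2$. The containment $\supseteq$ is immediate. For strictness I would produce an explicit witness — the natural candidate is precisely the noncommutative Motzkin lift $m(\ux)=x_2x_1^4x_2+x_2^2x_1^2x_2^2-3x_2x_1^2x_2+1$ mentioned in the introduction (Example \ref{ex:motzkin}). One must show (i) $m\in\sos{\cA}+\soc{\cA}$, which amounts to finding an explicit representation using the denominators $(1+x_j^2)^{-1}$ — mimicking Motzkin's classical certificate $(x^2+y^2+1)\cdot(\text{Motzkin}) = \text{SOS}$ but tracking the noncommutative word order and absorbing the mismatch into commutators — and (ii) $m\notin\sos{\rx}+\soc{\rx}$, i.e. $m$ is not cyclically equivalent to a sum of hermitian squares of \emph{polynomials}; this negative statement I would establish by the standard Gram-matrix / cyclic-equivalence argument (the degree-$\le 3$ SOS-mod-commutators cone is defined by a semidefinite feasibility problem in finitely many variables, and one exhibits a dual functional — a tracial linear functional nonnegative on $\sos{\rx}+\soc{\rx}$ but negative on $m$, obtained from a pair of matrices on which the trace of the Motzkin lift is negative-or-zero-with-the-wrong-boundary-behavior), exactly as in \cite{KS,KMV}. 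Both halves of (d) are really "just" verifications, but finding the explicit denominator certificate in (i) and the explicit refuting functional in (ii) is where the actual work lies, and I would expect to defer the detailed computation to the later Example \ref{ex:motzkin}.
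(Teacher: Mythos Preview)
Your plan for (d) and (e) is fine and matches the paper (which indeed defers (d) to Example \ref{ex:motzkin}, and for (e) gives the explicit identities $(1+x_j^2)^{-1}=(1+x_j^2)^{-2}(1+x_j^2)$ and $1-(1+x_j^2)^{-1}=x_j^2(1+x_j^2)^{-1}$, then inducts). Your treatment of (c) is also essentially the paper's argument---evaluate at a matrix tuple where the element does not vanish and use that the matricial trace kills commutators but is strictly positive on nonzero hermitian squares---though invoking (a) and (b) there is unnecessary, since (c) is a statement about $\cA$, not about the intersection with $\rx$.

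The genuine gap is in (a) and especially (b). Your ``projection/splitting'' argument relies on a \emph{linear} retraction $\pi:\cA\to\rx$ coming from the normal-form basis, and you assert that if a sum of commutators (resp.\ hermitian squares) of elements of $\cA$ lands in $\rx$ then it equals the corresponding expression in the $\pi$-images. But $\pi$ is not multiplicative, so $\pi([a,b])\neq[\pi(a),\pi(b)]$ and $\pi(aa^*)\neq\pi(a)\pi(a)^*$ in general; nothing in the free-product decomposition forces the ``non-polynomial'' cross-terms to vanish individually. The paper repairs (a) by replacing the linear projection with an \emph{algebra} embedding $\cA\hookrightarrow\ser$ into formal power series (sending $(1+x_j^2)^{-1}\mapsto\sum_k(-1)^kx_j^{2k}$); commutators then map to commutators, and one checks directly that $\soc{\ser}\cap\rx=\soc{\rx}$ using the word-grading. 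For (b) there is no such algebraic trick: the paper instead observes that any $s\in\sos{\cA}$ is positive semidefinite under every matrix evaluation, and then invokes the Helton--McCullough Positivstellensatz \cite{Hel,McC} to conclude $s\in\sos{\rx}$ when $s$ is a polynomial. This is a nontrivial external input that your plan omits, and without it (b) does not go through.
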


\begin{proof}
(a) There is an embedding of $\cA$ into formal power series $\ser$ that sends
$(1+x_j^2)^{-1}$ to $\sum_{k=0}^\infty (-1)^k x_j^{2k}$.
If $s,t\in\ser$ are given as $s=\sum_{w\in\mx}s_ww$ and $t=\sum_{w\in\mx}t_ww$, then their commutator
$$[s,t]=\sum_{i=0}^\infty \sum_{|u|+|v|=i}s_ut_v \cdot [u,v]$$
is a convergent (with respect to the adic topology of the power series) series of commutators. From here we immediately deduce that $\soc{\ser}\cap \rx=\soc{\rx}$, so (a) follows.

(b) If $\uX\in\matc{k}_\her^n$ and $s\in\sos{\cA}$, then $s(\uX)$ is a positive semidefinite matrix. Hence (b) holds by the Helton-McCullough Positivstellensatz, see \cite[Theorem 0.2]{McC} or \cite[Theorem 1.1]{Hel}.

(c) For every $\uX\in\matc{k}_\her^n$ and $c\in\soc{\cA}$ we have $\tr(c(\uX))=0$. On the other hand, if $f\in\cA\setminus\{0\}$, then there are $k\in\N$ and $\uY\in\matc{k}_\her^n$ such that $f(\uY)$ is nonzero, see e.g. \cite[Remark 6.7]{Vol}. Consequently $\tr(f(\uY)f^*(\uY))>0$. Therefore (c) holds.

(d) See Example \ref{ex:motzkin} below.

(e) Since $(1+x_j^2)^{-1}=(1+x_j^2)^{-2}(1+x_j^2)$ and $1-(1+x_j^2)^{-1}=x_j^2(1+x_j^2)^{-1}$ belong to $\sos{\cA}$, so do $(1+x_j^2)^{-m}$ and $1-(1+x_j^2)^{-m}=(1-(1+x_j^2)^{-1})\sum_{i=0}^{m-1}(1+x_j^2)^{-i}$ for every $m\in\N$.
\end{proof}

\begin{exa}\label{ex:motzkin}
Let $m=x_2x_1^4x_2+x_2^2x_1^2x_2^2-3x_2x_1^2x_2+1$. Note that $m$ is a noncommutative lift of the classical Motzkin polynomial (see \cite[Proposition 1.2.2]{Mar} or \cite[Remark 1.1.2]{Sche}), which is nonnegative on $\R^2$ but not a sum of squares of polynomials. In particular, $m\notin \sos{\rx}+\soc{\rx}$. On the other hand, by \cite[Section 4.2]{Qua} we have $m=s+c$ for $s\in\sos{\cA}$ and $c\in\soc{\cA}$ where
$$s
= (1-x_1^2x_2^2)^*(1+x_1^2)^{-1}(1-x_1^2x_2^2) 
+ x_2(1+x_1^2)^{-1}(x_1^3-x_1)^2x_2 
+(x_2^2-1)(1+x_1^2)^{-1}x_1^2(x_2^2-1)
$$
and
\begin{flalign*}
&& c=2\big[x_2,[(1+x_1^2)^{-1},x_2]\big]. && \square
\end{flalign*}
\end{exa}

Let us define the convex cone
\begin{equation}\label{e:thecone}
\cK:= \big(\sos{\cA}+\soc{\cA} \big)\cap\rxh
\end{equation}
in $\rxh$.

\begin{lem}\label{l:inv}
$\cA_\her\subseteq\sos{\cA}+\soc{\cA}+\rxh$.
\end{lem}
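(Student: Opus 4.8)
The goal is to show that every self-adjoint element $a\in\cA_\her$ can be written as $s+c+p$ with $s\in\sos{\cA}$, $c\in\soc{\cA}$, and $p\in\rxh$. The natural strategy is to peel off the rational part of $a$ one denominator at a time, absorbing it into squares and commutators modulo $\rxh$. The key observation is Lemma \ref{l:cones}(e): for each $j$ and each $m\in\N$, the element $(1+x_j^2)^{-m}$ lies in $\sos{\cA}$, and so does $1-(1+x_j^2)^{-m}$; hence $(1+x_j^2)^{-m}\equiv 0 \pmod{\sos{\cA}+\rxh}$ in the weak sense that $(1+x_j^2)^{-m}=\big((1+x_j^2)^{-m}\big)+0$ is already a hermitian square, and more usefully $1=(1+x_j^2)^{-m}+\big(1-(1+x_j^2)^{-m}\big)$ decomposes the identity into two hermitian squares in $\cA$.

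First I would reduce to monomials. Since $\cA_\her$ is spanned (over $\R$) by elements of the form $b+b^*$ with $b$ a product of the generators $x_1,\dots,x_n,(1+x_1^2)^{-1},\dots,(1+x_n^2)^{-1}$, and since $\sos{\cA}+\soc{\cA}+\rxh$ is a subspace of $\cA$ containing $\rxh$, it suffices to treat a single such $b+b^*$. Write $b=w_0 r_1 w_1 r_2 w_2\cdots r_k w_k$ where each $r_i$ is one of the inverses $(1+x_{j_i}^2)^{-1}$ and each $w_i\in\mx$ is a (possibly empty) word in the $x$'s; here $k$ is the number of inverse-factors appearing in $b$. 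I would induct on $k$. The base case $k=0$ is immediate: then $b\in\mx$, so $b+b^*\in\rxh$.

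For the inductive step, consider the leftmost inverse-factor. Using $r_1=(1+x_j^2)^{-1}$ and the polarization-type identity
\[
r_1 u = \tfrac14\big((r_1+u)^*(r_1+u)-(r_1-u)^*(r_1-u)\big) + [\text{something}],
\]
together with the fact that $r_1$ and $r_1^2$ differ from $1$ by elements of $\sos{\cA}$ (Lemma \ref{l:cones}(e)), one can rewrite $b+b^*$ modulo $\soc{\cA}$ as a combination of a hermitian square (landing in $\sos{\cA}$) and products with strictly fewer inverse-factors. More concretely, using $[\,r_1,\, x_j\,]=0$ one may freely move powers of $x_j$ across $r_1$; and using $r_1 = 1-x_j^2 r_1 = 1 - x_j r_1 x_j$ one may trade the inverse-factor $r_1$ for the constant $1$ at the cost of introducing a term whose "weight" (e.g.\ total length of surrounding words plus a lexicographic measure on inverse positions) is strictly smaller, so that the induction hypothesis applies. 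The commutators that appear — as in the Motzkin example, where $c = 2\big[x_2,[(1+x_1^2)^{-1},x_2]\big]$ — are exactly what absorbs the non-symmetric leftover terms; one exploits $\tr$-invariance under cyclic permutations at the level of the cone $\soc{\cA}$, i.e.\ $uv\equiv vu\pmod{\soc{\cA}}$.

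\textbf{Main obstacle.} The delicate point is the bookkeeping that makes the induction well-founded: trading $r_1=(1+x_j^2)^{-1}$ for $1$ via $r_1=1-x_j r_1 x_j$ \emph{re-introduces} an inverse-factor (namely $r_1$ again, now flanked by extra $x_j$'s), so a naive "count the inverses" induction does not terminate. The right fix is to iterate the geometric expansion a finite but controlled number of times, or equivalently to work modulo a higher power $(1+x_j^2)^{-m}$: since $1-(1+x_j^2)^{-m}\in\sos{\cA}$ for all $m$, and $(1+x_j^2)^{-m}$ times a polynomial has a hermitian-square-plus-commutator decomposition for suitable $m$ (this is the $\cA$-analogue of clearing denominators), one can push the "inverse content" to arbitrarily high order and hence into $\sos{\cA}$. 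Making this quantitative — choosing $m$ large enough relative to the degrees of the words $w_i$ flanking $r_1$ so that the remainder after the geometric expansion is genuinely a square in $\cA$ — is the technical heart, and it is precisely the kind of computation carried out in \cite[Section 4.2]{Qua} for the Motzkin lift; the general case follows the same template applied inductively to each inverse-factor in turn.
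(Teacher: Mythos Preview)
Your outline has the right shape --- reduce to words $u+u^*$, invoke Lemma~\ref{l:cones}(e), and induct on some measure of rational complexity --- and you correctly identify the obstacle: the relation $r_j=1-x_jr_jx_j$ reintroduces $r_j$, so a na\"ive induction on the number $k$ of inverse factors does not terminate. But none of your proposed fixes actually closes the gap. Iterating the geometric expansion still leaves $r_j$ in the tail; the claim that ``$(1+x_j^2)^{-m}$ times a polynomial has a hermitian-square-plus-commutator decomposition for suitable $m$'' is neither made precise nor true as stated (take the polynomial $-1$); and the Quarez computation you invoke is a bespoke calculation for one polynomial, not a general mechanism. Your polarization identity is also the wrong tool, since it produces a \emph{difference} of hermitian squares and hence one summand with the wrong sign for $\sos{\cA}$.

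The missing ingredient is a specific completing-the-square identity. Writing $u=u'\,r\,u''$ with $r=(1+x_j^2)^{-m}$ a maximal block of inverses, one has
\[
u+u^*=(u'+u''^*)\,r\,(u'+u''^*)^*+u'(1-r){u'}^*+{u''}^*(1-r)u''-u'{u'}^*-{u''}^*u''.
\]
The first three terms lie in $\sos{\cA}$ by Lemma~\ref{l:cones}(e), and the last two are handled by induction. Crucially, the induction is on the number $\delta(u)$ of maximal \emph{blocks} of $(1+x_j^2)^{-1}$ in a fixed normal form, not on your total count $k$: one first cyclically rotates $u$ so that it (and hence $u'$) begins with such a block; then $u'{u'}^*$ begins and ends with that same block, and a further cyclic rotation (i.e.\ subtracting a commutator) merges them and strictly decreases the block count. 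With $k$ this fails --- splitting at a middle inverse factor leaves $u''^*u''$ with $2\lceil(k-1)/2\rceil=k$ inverse factors when $k$ is even, and cyclic rotation cannot help since it preserves $k$. The paper carries out this elimination one variable at a time.
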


\begin{proof}
Consider the set of formal words in $2n$ symbols $x_1,\dots,x_n,(1+x_1^2)^{-1},\dots,(1+x_n^2)^{-1}$ that do not contain subwords $x_j(1+x_j^2)^{-1}$ or $(1+x_j^2)^{-1}x_j^2$; it maps injectively into $\cA$, and its image, denoted $\cE$, is a basis of $\cA$.
An expansion of $a\in\cA$ with respect to $\cE$ will be called {\em the normal form} of $a$.
That is, univariate sub-expressions in $a$ are written as partial fractions, with the inverses on the left for the sake of bookkeeping.

It suffices to show that $u+u^* \in \sos{\cA}+\soc{\cA}+\rxh$ for all $u\in\cE$. This is done by consecutively eliminating $(1+x_1^2)^{-1},\dots,(1+x_n^2)^{-1}$ from $u$; we demonstrate this only for $(1+x_1^2)^{-1}$, and the other $(1+x_j^2)^{-1}$ are eliminated in the same manner.

Every $v\in\cE$ can be uniquely written as
$$v=v_0(1+x_1^2)^{-m_1}v_1(1+x_1^2)^{-m_2}v_2\cdots (1+x_1^2)^{-m_\ell}v_{\ell}$$
where $m_k>0$ and $v_k\in\cE$ with $v_j\neq 1$ for $0<j<\ell$ do not contain $(1+x_1^2)^{-1}$.
Set $\delta(v)=\ell$. More generally, for $a\in\cA$ let $\delta(a)$ be the maximum of $\delta(v)$ for $v$ appearing in the normal form of $a$.

Assume that $(1+x_1^2)^{-1}$ appears in $u$ (as otherwise there is nothing to be done); that is, $\delta(u)\ge1$.
After subtracting an element of $\soc{\cA}$ from $u+u^*$ and taking a maximal term with respect to $\delta$, we can assume that $u$ starts with $(1+x_1^2)^{-1}$.
We can write $u=u'(1+x_1^2)^{-m}u''$ for $m>0$ and $u',u''\in\cE$ with $\delta(u')=\lfloor\frac{\delta(u)}{2}\rfloor$ and $\delta(u'')=\delta(u)-1-\delta(u')$.
Then
\begin{equation}\label{e:u}
\begin{split}
u+u^*
&=
\big(u'+{u''}^*\big)(1+x_1^2)^{-m}\big(u'+{u''}^*\big)^* \\
&\quad+u'(1-(1+x_1^2)^{-m}){u'}^* \\
&\quad+{u''}^*(1-(1+x_1^2)^{-m}){u''} \\
&\quad- u'{u'}^*-{u''}^*{u''}.
\end{split}
\end{equation}
The first three terms of \eqref{e:u} belong to $\sos{\cA}$ by Lemma \ref{l:cones}(e).
Furthermore, note that $\delta(u''{u''}^*)<\delta(u)$, and since $u'$ is either $1$ (if $\delta(u)=1$) or starts with $(1+x_1^2)^{-1}$,
there is $c\in\soc{\cA}$ such that
$\delta(u'{u'}^*-c)<\delta(u)$.
Using \eqref{e:u} and induction on $\delta(u)$ it then follows that
$u+u^*\in \sos{\cA}+\soc{\cA}+\rxh$.
\end{proof}

Let $\cA_\C=\C\otimes_\R\cA$ denote the complexification of $\cA$.

\begin{lem}\label{l:riesz}
Every linear functional $\varphi:\rxh\to\R$ satisfying $\varphi(\cK)=\R_{\ge0}$ extends to a linear $*$-functional $\phi:\cA_\C\to\C$ satisfying $\phi(\sos{\cA_\C}+\soc{\cA_\C})=\R_{\ge0}$.
\end{lem}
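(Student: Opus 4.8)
The plan is to reduce this to a standard extension argument for positive functionals, using Lemma \ref{l:inv} to control the ``cross terms'' between $\cA_\her$ and $\rxh$. First I would note that it suffices to produce an $\R$-linear functional $\psi:\cA_\her\to\R$ extending $\varphi$ with $\psi(\sos{\cA})\subseteq\R_{\ge0}$ and $\psi(\soc{\cA})=\{0\}$; the complex functional $\phi$ is then recovered by $\phi(a+ib)=\psi(a)+i\psi(b)$ for $a,b\in\cA_\her$, and one checks routinely that $\phi$ is a $*$-functional with $\phi(\sos{\cA_\C}+\soc{\cA_\C})=\R_{\ge0}$ (using that $\sos{\cA_\C}$ is spanned over $\R_{\ge0}$ by elements $cc^*$ with $c\in\cA_\C$, each of which decomposes into a $\sos{\cA}$ part plus a $\soc{\cA_\C}$ part upon writing $c=a+ib$; and that $\phi$ does not vanish identically, since $\varphi(1)=1$, so the image is exactly $\R_{\ge0}$ rather than $\{0\}$).

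Next I would invoke Lemma \ref{l:inv}: $\cA_\her\subseteq \sos{\cA}+\soc{\cA}+\rxh$. This says that the subspace $V:=\sos{\cA}-\sos{\cA}+\soc{\cA}+\rxh$ is all of $\cA_\her$ (it is clearly a subspace, being a sum of a subspace and two cones closed under negation once we symmetrize). So I only need to \emph{define} $\psi$ consistently on the sum $\rxh + \bigl(\sos{\cA}+\soc{\cA}\bigr)$, declaring $\psi|_{\rxh}=\varphi$ and $\psi$ to vanish on $\soc{\cA}$ and to be nonnegative on $\sos{\cA}$, and then extend $\R$-linearly to all of $\cA_\her$ by choosing a complement. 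The one thing that must be verified for this to be well-defined is \emph{compatibility on the overlap}: if $p\in\rxh$ also lies in $\sos{\cA}+\soc{\cA}$, i.e.\ $p\in\cK$, then the two prescriptions must agree, i.e.\ we need $\varphi(p)\ge 0$ — which is exactly the hypothesis $\varphi(\cK)=\R_{\ge0}$. Slightly more carefully: if $p=s_1+c_1 = s_2+c_2$ with $s_i\in\sos{\cA}$, $c_i\in\soc{\cA}$ and $p\in\rxh$, I must know $\varphi$ is already forced, and that any linear functional on $\sos{\cA}+\soc{\cA}$ vanishing on $\soc{\cA}$, nonnegative on $\sos{\cA}$, and restricting to $\varphi$ on $\cK$ exists; this is where Lemma \ref{l:cones}(c) enters, guaranteeing $\sos{\cA}\cap\soc{\cA}=\{0\}$, so that $\sos{\cA}+\soc{\cA}$ is an \emph{internal direct sum} of the cone's span and $\soc{\cA}$, and a functional with the required sign behaviour on the $\sos{\cA}$ part is unobstructed.

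Concretely, I would assemble the functional as follows. Consider the subspace $W := \spa(\sos{\cA}) + \soc{\cA} + \rxh \subseteq \cA_\her$; by Lemma \ref{l:inv} in fact $W=\cA_\her$. Define $\psi_0$ on $\spa(\sos{\cA}) + \soc{\cA}$ by $\psi_0(s+c)=$ ``the part of $\varphi$ coming from $s$'': more precisely, by Lemma \ref{l:cones}(c) the map $\sos{\cA}+\soc{\cA}\to(\sos{\cA}+\soc{\cA})/\soc{\cA}$ identifies the quotient with $\spa(\sos{\cA})$, and one builds any positive linear functional on this quotient that agrees with $\varphi$ on the image of $\cK$. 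Then glue with $\varphi$ on $\rxh$: the gluing is well-defined on the sum precisely because on $\spa(\sos{\cA}+\soc{\cA})\cap\rxh$ — which contains $\cK$ and its negatives — the prescriptions coincide by $\varphi(\cK)=\R_{\ge0}$ and linearity. Finally extend from $W=\cA_\her$ — wait, there is nothing left to extend since $W=\cA_\her$; one just needs $\psi$ to be well-defined on all of it, which is the compatibility check above. Set $\phi(a+ib)=\psi(a)+i\psi(b)$.

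\textbf{Main obstacle.} The delicate point is the \emph{well-definedness} of the glued functional on $\spa(\sos{\cA}) + \soc{\cA} + \rxh$: a priori an element of this sum has many representations, and I must check that $\varphi(\cK)=\R_{\ge0}$ together with $\sos{\cA}\cap\soc{\cA}=\{0\}$ (Lemma \ref{l:cones}(c)) and $\soc{\cA}\cap\rxh=\soc{\rxh}$, $\sos{\cA}\cap\rxh=\sos{\rxh}$ (Lemma \ref{l:cones}(a),(b)) really pin the value down on all overlaps — in particular that $\varphi$ automatically annihilates $\soc{\rxh}$ and is nonnegative on $\sos{\rxh}$ because $\soc{\rxh}\subseteq\cK$ and $\sos{\rxh}\subseteq\cK$ (and $-\soc{\rxh}\subseteq\cK$ too, forcing $\varphi(\soc{\rxh})=0$). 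Once these intersection identities are in hand the bookkeeping is routine; the conceptual content is entirely in Lemmas \ref{l:cones} and \ref{l:inv}, which have already been established.
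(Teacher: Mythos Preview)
Your outline correctly identifies the ingredients (Lemmas \ref{l:cones} and \ref{l:inv}) and the overall shape of the argument, but the central step --- producing the positive extension $\psi:\cA_\her\to\R$ --- is asserted rather than proved, and the ``gluing'' you describe does not work as written.

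The decomposition $\cA_\her=\spa(\sos{\cA})+\soc{\cA}\cap\cA_\her+\rxh$ gives you no leverage, because $\spa(\sos{\cA})$ is already all of $\cA_\her$: for $a\in\cA_\her$ one has $4a=(a+1)(a+1)^*-(a-1)(a-1)^*$. So your ``$\psi_0$ on $\spa(\sos{\cA})+\soc{\cA}$'' is a functional on the whole target space, and saying ``one builds any positive linear functional on this quotient that agrees with $\varphi$ on the image of $\cK$'' is precisely the statement to be proved, not a construction. (Relatedly, Lemma \ref{l:cones}(c) gives only $\sos{\cA}\cap\soc{\cA}=\{0\}$, which does \emph{not} imply $\spa(\sos{\cA})\cap\soc{\cA}=\{0\}$, so your quotient identification is also off.) The well-definedness check you flag as the ``main obstacle'' is therefore not bookkeeping; it is the entire content.

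The paper handles this cleanly by invoking the Riesz extension theorem \cite[Proposition 1.7]{Schm1}: with $E=\cA_\her$, cone $C=\sos{\cA}+\soc{\cA}\cap\cA_\her$, and subspace $F=\rxh$, Lemma \ref{l:inv} gives $E=F+C$, the hypothesis is exactly $\varphi\ge0$ on $F\cap C=\cK$, and the theorem delivers the extension $\phi'$ with $\phi'(C)\subseteq\R_{\ge0}$. A minor second issue: your complexification $\phi(a+ib)=\psi(a)+i\psi(b)$ for $a,b\in\cA_\her$ only defines $\phi$ on $\cA_\her+i\cA_\her\subsetneq\cA_\C$ (skew-hermitian elements of $\cA$ are missed); the paper instead passes through the real and imaginary part maps $\rho,\iota:\cA_\C\to\cA$ and symmetrizes.
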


\begin{proof}
Note that $\sos{\cA}+\soc{\cA}$ is a convex cone in $\cA_\her$, and
$$\cA_\her=\sos{\cA}+\soc{\cA}\cap \cA_\her+\rxh$$
by Lemma \ref{l:inv}. 
Hence $\varphi$ extends to a linear fuctional $\phi':\cA_\her\to\R$ 
that satisfies $\phi'(\sos{\cA}+\soc{\cA}\cap\cA_\her)=\R_{\ge0}$ 
by the Riesz extension theorem \cite[Proposition 1.7]{Schm1}.
Let $\rho,\iota:\cA_\C\to\cA$ be $\R$-linear maps given by 
$\rho(a)=\frac12(a+\overline{a})$ and $\iota(a)=\frac1{2i}(a-\overline{a})$.
Thus $a=\rho(a)+i\iota(a)$ for $a\in\cA_\C$.
Let $\phi:\cA_\C\to\C$ be defined as
$$\phi(a) = \frac12\phi'(\rho(a)+\rho(a)^*)+\frac{i}{2}\phi'(\iota(a)+\iota(a)^*)$$
for $a\in\cA_\C$.
Then $\phi$ is a $*$-functional, 
$$
\phi([a,b])
=\phi\Big([\rho(a),\rho(b)]-[\iota(a),\iota(b)]
+i\big([\rho(a),\iota(b)]+[\iota(a),\rho(b)]\big)\Big)=0$$
and
\begin{align*}
\phi(aa^*)
&=\phi\Big(\rho(a)\rho(a)^*+\iota(a)\iota(a)^*+i\big(\iota(a)\rho(a)^*-\rho(a)\iota(a)^*\big)\Big)\\
&=\phi'\big(\rho(a)\rho(a)^*+\iota(a)\iota(a)^*\big)\ge0
\end{align*}
for all $a,b\in\cA_\C$.
Therefore $\phi(\sos{\cA_\C}+\soc{\cA_\C})=\R_{\ge0}$.
\end{proof}

\section{Unbounded tracial moment problem}\label{sec4}

The tracial analog of the moment problem for probability measures with compact support was solved in \cite{Had,Rad}. 
To obtain our rational Positivstellensatz on global trace positivity, one has to consider analogs of probability measures with non-compact support.
In this section we solve the unbounded tracial moment problem for \ncmeasures.

\begin{prop}\label{p:gns}
For every linear $*$-functional $\phi:\cA_\C\to\C$ satisfying $\phi(1)=1$ and $\phi(\sos{\cA_\C}+\soc{\cA_\C})=\R_{\ge0}$ there exist a tracial von Neumann algebra $(\cF,\tau)$ and $\uX\in \integ_\her^n$ such that
$\phi(a)=\tau(a(\uX))$ for all $a\in\cA_\C$.
\end{prop}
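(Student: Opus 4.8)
The plan is to run a GNS-type construction from the $*$-functional $\phi$, carefully controlling the unboundedness of the resulting operators so that they land in $\integ$ rather than merely among affiliated operators. First I would form the sesquilinear form $\langle a,b\rangle := \phi(b^*a)$ on $\cA_\C$. Positivity of $\phi$ on $\sos{\cA_\C}$ makes this a positive semidefinite form; quotienting by the null space $N=\{a : \phi(a^*a)=0\}$ and completing yields a Hilbert space $\cH$ with a dense subspace $\cD=\cA_\C/N$ and a cyclic vector $\xi$ the image of $1$. Left multiplication gives a representation $\pi$ of $\cA_\C$ by (generally unbounded) operators on $\cD$; one checks $N$ is a left ideal using the Cauchy--Schwarz inequality for $\phi$, so $\pi$ is well defined. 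The images $\hat X_j := \pi(x_j)$ are symmetric operators on $\cD$ (since $x_j^*=x_j$ and $\phi$ is a $*$-functional), and the vanishing of $\phi$ on $\soc{\cA_\C}$ makes $\phi(\,\cdot\,) = \langle \pi(\cdot)\xi,\xi\rangle$ a trace on the algebra $\pi(\cA_\C)$.

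The key structural point is that $\pi(\cA_\C)$ already contains the bounded operators $\pi((1+x_j^2)^{-1})$, and from Lemma \ref{l:cones}(e) one knows $(1+x_j^2)^{-1}$ and $1-(1+x_j^2)^{-1}$ lie in $\sos{\cA_\C}$, so $\pi((1+x_j^2)^{-1})$ is a bounded positive contraction. I would then argue that $\hat X_j$ is essentially self-adjoint: on the range of $\pi((1+x_j^2)^{-1})$ (which is a core-type dense subspace since that operator is injective with dense range), $\hat X_j$ is controlled by the bounded operator identity $\hat X_j\,\pi((1+x_j^2)^{-1}) = \pi(x_j(1+x_j^2)^{-1})$, and the relation $\pi((1+x_j^2)^{-1})(1+\hat X_j^2)=1$ on $\cD$ forces the deficiency indices to vanish. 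Let $X_j$ denote the self-adjoint closure. Now generate a von Neumann algebra: let $\cF_0$ be the von Neumann algebra on $\cH$ generated by all the bounded operators $\pi((1+x_j^2)^{-1})$ and the spectral projections of the $X_j$; then each $X_j$ is affiliated with $\cF_0$. The trace property of $\phi$ on the $*$-algebra $\pi(\cA_\C)$, together with $\phi(1)=1$, extends by normality/density to a normal tracial state $\tau$ on $\cF_0$; faithfulness can be arranged by cutting down by the support projection of $\tau$, and separability of the predual follows from separability of $\cH$ (which is countably generated over the separable algebra $\cA_\C$). This produces the tracial von Neumann algebra $(\cF,\tau)$ with $\tau(a(\uX))=\phi(a)$ for every $a\in\cA_\C$, at least once we know the evaluations make sense.

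The last and main obstacle is the integrability: we must show each $X_j\in\integ$, i.e. $\tau(|X_j|^p)<\infty$ for all $p\in\N$, and more generally that $a(\uX)\in L^1(\cF,\tau)$ for every $a\in\cA_\C$ so that $\tau(a(\uX))$ is literally the number $\phi(a)$ rather than a formal expression. For a single self-adjoint $X_j$, the moments $\tau(X_j^{2k}) = \phi(x_j^{2k})$ are finite real numbers for all $k$ (they are values of $\phi$ on squares), and by spectral theory $\tau(|X_j|^p)$ is finite for all $p$ precisely because the measure $\mu_j := \langle E_{X_j}(\cdot)\xi,\xi\rangle$ — really the trace-measure $\tau\circ E_{X_j}$ — has all moments finite; here one must be slightly careful that the relevant scalar measure controlling the $L^p$-norm is $\tau\circ E_{X_j}$, and identify its even moments with $\phi(x_j^{2k})$ via the trace identity $\tau(X_j^{2k})=\langle X_j^{2k}\xi,\xi\rangle$ applied to the cyclic vector (valid since $\xi$ is separating for $\cF'$, hence $X_j^{k}\xi$ computes the trace moments). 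Thus $X_j\in L^p$ for all $p$, so $X_j\in\integ$. Finally, since $\integ$ is a $*$-algebra containing each $X_j$ and each bounded $(1+X_j^2)^{-1}$, every $a(\uX)$ lies in $\integ\subset L^1$, and continuity of $\tau$ on $\cA_\C$ in the graph-type topology — or simply the equality of the two linear functionals $a\mapsto\tau(a(\uX))$ and $\phi$ on the spanning monomials — gives $\tau(a(\uX))=\phi(a)$ for all $a\in\cA_\C$. The delicate bookkeeping is ensuring essential self-adjointness of the $\hat X_j$ on the correct common core and checking that the trace on $\pi(\cA_\C)$ extends normally and faithfully; the noncommutative integration input from \cite{Nel,Tak2} is what makes the affiliated-operator arithmetic legitimate.
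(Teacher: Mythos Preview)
Your proof is on the right track and follows essentially the same GNS strategy as the paper's, but you gloss over a technical point that the paper handles with real care.

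First, a minor issue: your claim that the trace property of $\phi$ ``extends by normality/density'' from $\pi(\cA_\C)$ to $\cF_0$ is ill-posed as stated, since $\pi(\cA_\C)$ is not a subalgebra of $\cF_0$ (it contains unbounded operators). The paper avoids this by generating $\cF$ from the bounded resolvents $(X_j+iI)^{-1}$ only (your added spectral projections are already in that von Neumann algebra, so they are redundant), defining $\tau$ as the vector state at $\vv{1}$, and checking traciality on the bounded $*$-subalgebra generated by these resolvents---which \emph{does} sit inside $\cF$ and is SOT-dense. Your description can be massaged into this, but as written it obscures the actual argument.

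The more substantive gap is that you never establish that $\xi$ is cyclic for $\cF$, equivalently that $L^2(\cF,\tau)=\cH$. The paper devotes a full paragraph to this: for each $a\in\cA_\C$ it identifies the closure of $\{\vv{p\,a}:p\in\C[x_j,(1+x_j^2)^{-1}]\}$ with $L^2(\R,\mu)$ for some finite measure $\mu$, and then invokes the density of $\{(t\pm i)^{-k}:k\ge0\}$ in $L^2(\R,\mu)$ to bootstrap from the bounded orbit $\cF\vv{1}$ up to the full dense subspace $\cD$. This cyclicity is what yields faithfulness of $\tau$ and makes the final identity $\tau(a(\uX))=\langle a(\uX)\vv{1},\vv{1}\rangle=\phi(a)$ legitimate for unbounded $a(\uX)$. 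Your proposed workaround---cut down by the central support projection $e$ of $\tau$---is plausible (one has $e\vv{1}=\vv{1}$, and $\tau(eY)=\tau(Y)$ for $Y\in L^1$), but you would still need to verify that $\phi(a)=\tau(a(e\uX))$ for every $a\in\cA_\C$, which requires relating the $L^1$-extended trace on $e\cF$ back to the vector state on $\cH$; this is not quite immediate once $\cH\neq L^2(\cF,\tau)$, and is more work than you indicate.

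Your essential self-adjointness argument (via surjectivity of $1+\hat X_j^2$ on $\cD$, hence of $\hat X_j\pm iI$) and your integrability step are correct and match the paper's.
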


\begin{proof}
We split the proof, which is a version of the Gelfand--Naimark--Segal construction
for unbounded functionals on $\cA$ that produces power-integrable operators, 
in several steps.

\emph{Step 1: Construction of unbounded operators.}
On $\cA_\C$ we define a semi-scalar product $\langle a,b\rangle =\phi(ab^*)$. By the Cauchy-Schwarz
inequality for semi-scalar products,
$$\cN=\{a\in\cA_\C\colon \phi(aa^*)=0 \}$$
is a vector subspace of $\cA_\C$ and $\cN^*=\cN$.
Furthermore, for every $a\in\cN$, $b\in\cA_\C$ and $\ve>0$ we
have
$$0\le \phi\left((a^*\pm\varepsilon b)(a^*\pm\varepsilon b)^* \right)
=\varepsilon \left(\varepsilon \phi(bb^*) \pm 2\re \phi(ba)\right);$$
since $\varepsilon>0$ was arbitrary, and $a$ can be replaced by $ia$, it follows that $ba,ab\in\cN$. Hence $\cN$ is an ideal in $\cA_\C$. Let $\cH$ be the completion of $\cA_\C/\cN$ with respect to $\langle\cdot,\cdot\rangle$. Then $\cH$ is a separable Hilbert space; let $\vv a\in\cH$ denote the vector corresponding to $a\in\cA_\C$. The left multiplication by $x_j$ in $\cA_\C$ induces a densely defined symmetric operator $X'_j$ on $\cH$. In particular, $X_j'$ is closable by \cite[Section 3.1]{Schm0}; let $X_j$ be its closure. Since $x_j^2+1$ is invertible in $\cA$, the elements $x_j+i$ and $x_j-i$ in $\cA_\C$ are also invertible.
Hence the linear operators $X_j'+iI$ and $X_j'-iI$ are invertible on $\cA_\C/\cN$.
Therefore $X_j$ is a self-adjoint operator by \cite[Proposition 3.8]{Schm0}.
Note that
$$a(\uX)\vv{b}=\vv{ab}$$
for all $a,b\in\cA_\C$.
Then $-i$ belongs to the resolvent set of $X_j$, the resolvent $(X_j+iI)^{-1}$ is a bounded operator on $\cH$ and
\begin{equation}\label{e:ran}
\ran(X_j+iI)^{-1} = \dom X_j
\end{equation}
by \cite[Proposition 3.10]{Schm0}.

\emph{Step 2: A tracial von Neumann algebra.}
Let $\cF\subseteq \cB(\cH)$ be the von Neumann algebra generated by $(X_1+iI)^{-1},\dots,(X_n+iI)^{-1}$, i.e.,
the weak operator topology closure of the unital $*$-algebra generated by $(X_1+iI)^{-1},\dots,(X_n+iI)^{-1}$.
Define $\tau:\cF\to\C$ as $\tau(F) =\langle F\vv1,\vv1\rangle$. Then $\tau$ is a faithful normal state on $\cF$. Furthermore, $\tau$ is tracial. Indeed, let $P,Q$ be arbitrary elements of the unital $*$-algebra generated by $(X_1+iI)^{-1},\dots,(X_n+iI)^{-1}$. Then $P=p(\uX)$ and $Q=q(\uX)$ for some $p,q\in\cA_\C$.
Since $\phi(\soc{\cA_\C})=\{0\}$, we have
$$\tau(PQ)=\phi(pq)=\phi(qp)=\tau(QP).$$

By construction, the Hilbert space $L^2(\cF,\tau)$ naturally embeds into $\cH$.
In fact, $L^2(\cF,\tau)=\cH$. To see this, it suffices to show that $\vv{a}\in L^2(\cF,\tau)$ implies $\vv{x_ja}\in L^2(\cF,\tau)$ for every $a\in\cA$ and $j=1,\dots,n$.
Suppose $\vv{a}\in L^2(\cF,\tau)$; let $K\subset\cH$ be the closure of
$\{\vv{pa}\colon p\in\C[x_j,(1+x_j^2)^{-1}]\}$, and let $K_0\subseteq L^2(\cF,\tau)$ be the image of $\vv{a}$ under the $*$-algebra generated by $(X_j+iI)^{-1}$. Note that $(x_j\pm i)^{-1}\in \C[x_j,(1+x_j^2)^{-1}]$.
The map
$$\C[t,\tfrac{1}{t\pm i}]\to K,\qquad f\mapsto \vv{f(x_j)a}$$
induces a Hilbert space isomorphism
\begin{equation}\label{e:univar}
L^2(\R,\mu)\to K
\end{equation}
where $\mu$ is the finite measure on $\R$ satisfying
$$\int_\R f\,{\rm d}\mu = \phi(f(x_j)aa^*)$$
for $f\in\C[t,\tfrac{1}{t\pm i}]$.
The preimage of $K_0$ under the isomorphism \eqref{e:univar} contains the set $\{(t\pm i)^{-k}\colon k\in\N_0 \}$.
Therefore $K_0$ is dense in $K$ by \cite[Lemma 6.9]{Schm1}. Thus in particular $\vv{x_ja}\in K=\overline{K_0}\subset L^2(\cF,\tau)$, as desired.

\emph{Step 3: Affiliation.}
Let $U\in\cB(\cH)$ be a unitary in the commutant of $\cF$.
Since
\begin{equation}\label{e:com}
U(X_j+iI)^{-1}=(X_j+iI)^{-1}U
\end{equation}
and $\ran (X_j+iI)^{-1}=\dom X_j$ by \eqref{e:ran},
we have $U\dom X_j\subseteq \dom X_j$. Moreover, \eqref{e:com} then implies
$$(X_j+iI)U(X_j+iI)^{-1}(X_j+iI)=(X_j+iI)(X_j+iI)^{-1}U(X_j+iI)$$
on $\dom X_j$, from where we conclude $X_jU=UX_j$ on $\dom X_j$.
Since the unitary $U$ in the commutant of $\cF$ was arbitrary, the self-adjoint operator $X_j$ on $\cH$ is affiliated with $\cF$.

\emph{Step 4: Integrability.}
Since the positive semidefinite operator $X_j^2$ is affiliated with $\cF$,
by the spectral theorem there exists a projection valued measure $E_\lambda$ with values in $\cF$ such that
$X_j^2=\int_0^\infty \lambda\,{\rm d} E_\lambda$. Then for every $p\in\N$,
$$\tau(|X_j|^{2p})=\tau(X_j^{2p})
=\int_0^\infty \lambda^p\,{\rm d} \tau(E_\lambda)
=\int_0^\infty \lambda^p\,{\rm d} \langle E_\lambda\vv{1},\vv{1}\rangle
=\langle X_j^{2p}\vv{1},\vv{1}\rangle<\infty,
$$
where the second equality holds by \cite[Section 3]{Nel}, the third equality holds by the definition of integration, and the inequality holds since $\vv{1}\in\dom X_j^{2p}$. Therefore $X_j\in L^{2p}(\cF,\tau)$. As $p\in\N$ was arbitrary, it follows that $X_j\in \integ$.

\emph{Step 5: Conclusion.}
Finally, we have
$$\tau(a(\uX)) = \langle a(\uX)\vv{1},\vv{1}\rangle =\phi(a)$$
for every $a\in\cA_\C$.
\end{proof}

Let $(\cF,\tau)$ be a tracial von Neumann algebra and $\uX\in \integ_\her^n$. The functional $p\mapsto \tau(p(\uX))$ on $\rxh$ is called a \emph{\ncmeasure} (cf. \cite[Section 2.3]{VDN} or \cite[Section 6.4]{MS}).
We obtain the following tracial version of Haviland's theorem \cite[Theorem 3.1.2]{Mar}.

\begin{thm}\label{t:haviland}
Let $\varphi:\rxh\to\R$ be a linear functional with $\varphi(1)=1$.
Then $\varphi$ is a \ncmeasure if and only if
$\varphi(\cK)=\R_{\ge0}$.
\end{thm}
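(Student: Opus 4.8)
The plan is to prove Theorem~\ref{t:haviland} by combining the three results already established in this section: Lemma~\ref{l:riesz}, Proposition~\ref{p:gns}, and the easy converse direction. First I would dispatch the necessity: if $\varphi$ is a \ncmeasure, say $\varphi(p)=\tau(p(\uX))$ for some $(\cF,\tau)$ and $\uX\in\integ_\her^n$, then for $k\in\cK$ write $k=s+c$ with $s\in\sos{\cA}$ and $c\in\soc{\cA}$ (here $k\in\rxh$, so this is a decomposition inside $\cA_\her$ whose value on $\rxh$ agrees with $k$). Evaluating at $\uX$, the operator $s(\uX)$ is a sum of operators of the form $b(\uX)b(\uX)^*$ with $b\in\cA$, each of which is positive semidefinite and \integrable (the product of \integrable operators is \integrable by H\"older), so $\tau(s(\uX))\ge0$; and $\tau(c(\uX))=0$ since $\tau$ is a trace on $\integ$ and $c$ is a span of commutators. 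Hence $\varphi(k)=\tau(k(\uX))\ge0$, giving $\varphi(\cK)\subseteq\R_{\ge0}$; since $\varphi(1)=1$ and $1\in\cK$ (and $\cK$ is a cone), in fact $\varphi(\cK)=\R_{\ge0}$. One subtlety to address carefully: one must check that $\tau(k(\uX))$ coincides with $\varphi(k)$, i.e.\ that the \ncmeasure on $\rxh$ and its natural extension to $\cA_\her$ via Proposition~\ref{p:gns} are compatible; but this is immediate from $k\in\rx\subset\cA$.

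For the sufficiency direction, suppose $\varphi(\cK)=\R_{\ge0}$. Apply Lemma~\ref{l:riesz} to extend $\varphi$ to a linear $*$-functional $\phi:\cA_\C\to\C$ with $\phi(\sos{\cA_\C}+\soc{\cA_\C})=\R_{\ge0}$; note $\phi(1)=\varphi(1)=1$. Then apply Proposition~\ref{p:gns} to obtain a tracial von Neumann algebra $(\cF,\tau)$ and $\uX\in\integ_\her^n$ with $\phi(a)=\tau(a(\uX))$ for all $a\in\cA_\C$. Restricting to $\rxh\subset\cA_\her\subset\cA_\C$ gives $\varphi(p)=\phi(p)=\tau(p(\uX))$ for all $p\in\rxh$, which is exactly the assertion that $\varphi$ is a \ncmeasure.

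The proof is therefore almost entirely an assembly of the preceding lemmas; there is no genuinely hard new step. The only place requiring a little care is the necessity direction, where one must justify (i) that each summand $b(\uX)b(\uX)^*$ of $s(\uX)$ lies in $L^1(\cF,\tau)$ so that $\tau$ applies and is nonnegative --- which follows since $b\in\cA_p$ for some $p$ implies $b(\uX)\in L^p(\cF,\tau)$ and hence $b(\uX)b(\uX)^*\in L^{p/2}(\cF,\tau)\subseteq L^1$ when combined across all summands via \eqref{e:holder} --- and (ii) that $\tau$ annihilates commutators of \integrable operators, which is the statement (recorded in Section~\ref{sec2}) that $\tau$ extends to a tracial state on $\integ$. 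With these observations in place the equivalence follows, completing the tracial Haviland theorem.
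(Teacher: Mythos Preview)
Your proposal is correct and follows exactly the paper's approach: the sufficiency direction is precisely the assembly of Lemma~\ref{l:riesz} and Proposition~\ref{p:gns} that the paper gives, and the necessity direction is what the paper dismisses as ``straightforward''---you have simply spelled out the details (decompose $k\in\cK$ as $s+c$, use that $\integ$ is a $*$-algebra on which $\tau$ is tracial). The only minor imprecision is the parenthetical ``$b(\uX)b(\uX)^*\in L^{p/2}(\cF,\tau)\subseteq L^1$'', which is awkward for small $p$; it is cleaner (and sufficient) to note, as you already did earlier, that $b(\uX)\in\integ$ since $\integ$ is a $*$-algebra, whence $b(\uX)b(\uX)^*\in\integ\subset L^1(\cF,\tau)$.
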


\begin{proof}
$(\Rightarrow)$ is straightforward. 
$(\Leftarrow)$ The functional $\varphi$ extends to a $*$-functional $\phi:\cA_\C\to\C$ satisfying 
$\phi(\sos{\cA_\C}+\soc{\cA_\C})$ by Lemma \ref{l:riesz}.
Then $\varphi(p)=\phi(p)=\tau(p(\uX))$ 
for some $(\cF,\tau)$ and $\uX\in\integ_\her^n$, and all $p\in\rxh$, by Proposition \ref{p:gns}.
\end{proof}

\begin{rem}
The cone $\sos{\rx}+\soc{\rx}$ is closed in $\rx$ by
\cite[Proposition 1.58 and Corollary 3.11]{BKP16} and \cite[Remark A.29]{Schm1} 
and does not contain $\cK$ by Lemma \ref{l:cones}(d).
The condition ``$\varphi\ge0$ on $\cK$'' 
is therefore more restrictive than ``$\varphi\ge0$ on $\sos{\rx}+\soc{\rx}$''.
\end{rem}

The proof of Proposition \ref{p:gns} actually implies a more general version of Theorem \ref{t:haviland}. 
Let $S\subset\rxh$. Denote
\begin{equation}\label{e:coneS}
\cK_S:= \left(\soc{\cA}+\sum_{s\in \{1\}\cup S}s\cdot\sos{\cA}\right)\cap\rxh.
\end{equation}
In particular, $\cK_\emptyset$ equals $\cK$ from \eqref{e:thecone}.
Let us say that the functional $p\mapsto \tau(p(\uX))$ for some tracial von Neumann algebra $(\cF,\tau)$ and $\uX\in\integ_\her^n$ is \emph{\ncmeasure constrained by $S$} if $s(\uX)\succeq0$ for all $s\in S$.

\begin{cor}\label{c:haviland2}
Let $\varphi:\rxh\to\R$ be a linear functional with $\varphi(1)=1$.
Then $\varphi$ is a \ncmeasure constrained by $S\subset\rxh$ if and only if
$\varphi(\cK_S)=\R_{\ge0}$.
\end{cor}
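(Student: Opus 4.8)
The plan is to replay the chain Lemma~\ref{l:riesz} $\Rightarrow$ Proposition~\ref{p:gns} $\Rightarrow$ Theorem~\ref{t:haviland}, carrying along the extra cone generators $s\cdot\sos{\cA}$ for $s\in S$. The forward implication is direct, and the new technical content is to arrange that the operators produced by the GNS construction satisfy $s(\uX)\succeq0$.

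\textbf{Necessity.} Suppose $\varphi(p)=\tau(p(\uX))$ for a tracial von Neumann algebra $(\cF,\tau)$ and $\uX\in\integ_\her^n$ with $s(\uX)\succeq0$ for every $s\in S$. Given $g\in\cK_S$, write $g=c+\sum_{s\in\{1\}\cup S}\sum_i s\,b_{s,i}b_{s,i}^*$ with $c\in\soc{\cA}$ and $b_{s,i}\in\cA$. Since $\integ$ is a $*$-algebra containing $X_1,\dots,X_n$ and $(I+X_1^2)^{-1},\dots,(I+X_n^2)^{-1}$ on which $\tau$ is tracial, every $b_{s,i}(\uX)$ lies in $\integ$ and $\tau(c(\uX))=0$; moreover $s(\uX)^{1/2}\in\integ$ (as $\tau(|s(\uX)^{1/2}|^{2p})=\tau(s(\uX)^p)<\infty$), so $\tau\big(s(\uX)b_{s,i}(\uX)b_{s,i}(\uX)^*\big)=\tau\big((s(\uX)^{1/2}b_{s,i}(\uX))^*(s(\uX)^{1/2}b_{s,i}(\uX))\big)\ge0$, and this holds for $s=1$ as well. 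Hence $\varphi(g)=\tau(g(\uX))\ge0$; since $1\in\cK_S$ and $\varphi(1)=1$, this gives $\varphi(\cK_S)=\R_{\ge0}$.

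\textbf{Sufficiency.} Put $\cC:=\soc{\cA}+\sum_{s\in\{1\}\cup S}s\cdot\sos{\cA}\subseteq\cA$, so that $\cC\supseteq\sos{\cA}+\soc{\cA}$ and $\cC\cap\rxh=\cK_S$ by \eqref{e:coneS}. Because $\cC$ contains $\sos{\cA}+\soc{\cA}$, Lemma~\ref{l:inv} still yields $\cA_\her=(\cC\cap\cA_\her)+\rxh$, so the Riesz extension theorem \cite[Proposition 1.7]{Schm1} extends $\varphi$ to a linear functional $\phi'\colon\cA_\her\to\R$ with $\phi'\ge0$ on $\cC\cap\cA_\her$; in particular $\phi'$ vanishes on the self-adjoint commutators $\soc{\cA}\cap\cA_\her$. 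Running the complexification argument of Lemma~\ref{l:riesz} on $\phi'$ produces a $*$-functional $\phi\colon\cA_\C\to\C$ with $\phi(1)=1$, with $\phi$ vanishing on $\soc{\cA_\C}$, and with $\phi(aa^*)\ge0$ for all $a\in\cA_\C$; it additionally gives $\phi(saa^*)\ge0$ for $s\in S$, because writing $a=\rho(a)+i\iota(a)$ one has $saa^*=sh+isb$ with $h=\rho(a)\rho(a)^*+\iota(a)\iota(a)^*\in\sos{\cA}$ and $b=\iota(a)\rho(a)^*-\rho(a)\iota(a)^*$ skew-adjoint, where $\phi(sb)=\tfrac12\phi'([s,b])=0$ (a self-adjoint commutator, killed by $\phi'$) and $\phi(sh)=\tfrac12\phi'(sh+hs)\ge0$ since $sh+hs\in s\cdot\sos{\cA}+\soc{\cA}\subseteq\cC$ is self-adjoint. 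Thus $\phi$ is nonnegative on $\soc{\cA_\C}+\sum_{s\in\{1\}\cup S}s\cdot\sos{\cA_\C}$, and Proposition~\ref{p:gns} supplies $(\cF,\tau)$ and $\uX\in\integ_\her^n$ with $\tau(a(\uX))=\phi(a)$ for all $a\in\cA_\C$; restriction to $\rxh$ gives $\varphi(p)=\tau(p(\uX))$.

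\textbf{The main obstacle} is the last point: verifying $s(\uX)\succeq0$ for $s\in S$, where $s(\uX)$ is a generally unbounded self-adjoint operator affiliated with $\cF$, so one must control $\tau\big(s(\uX)\,\cdot\,\big)$ along an approximation. Let $\cD_0\subseteq\cF$ be the weak-operator-dense unital $*$-subalgebra generated by $(X_1\pm iI)^{-1},\dots,(X_n\pm iI)^{-1}$; each $d\in\cD_0$ equals $a(\uX)$ for some $a\in\cA_\C$ with bounded evaluation, so $\tau\big(d^*s(\uX)d\big)=\tau\big(s(\uX)dd^*\big)=\phi(saa^*)\ge0$. Fix $\ve>0$, let $E:=E_{(-\infty,-\ve)}(s(\uX))\in\cF$ be the corresponding spectral projection, and use the Kaplansky density theorem to choose $d_k\in\cD_0$ with $\|d_k\|\le1$ and $d_k\to E$ in the strong operator topology; then $\|d_k-E\|_2\to0$, hence $\|d_k^*-E\|_2\to0$ by traciality, and since $s(\uX)\in\integ\subseteq L^2(\cF,\tau)$ Hölder's inequality \eqref{e:holder} gives $\|d_k^*s(\uX)d_k-Es(\uX)E\|_1\to0$. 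Therefore $0\le\tau\big(d_k^*s(\uX)d_k\big)\to\tau\big(Es(\uX)E\big)\le-\ve\,\tau(E)\le0$, so $\tau(E)=0$ and $E=0$ by faithfulness of $\tau$. As $\ve>0$ was arbitrary, $s(\uX)$ has trivial negative part, i.e.\ $s(\uX)\succeq0$; thus $\varphi$ is a \ncmeasure constrained by $S$, which completes the proof.
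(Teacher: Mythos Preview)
Your argument is correct and follows the same route as the paper: extend $\varphi$ (via Riesz, using the larger cone $\cK_S$) to a $*$-functional on $\cA_\C$, run the GNS construction of Proposition~\ref{p:gns}, and then verify $s(\uX)\succeq0$ from $\phi(saa^*)\ge0$. The paper's proof is terse (it writes only $\langle s(\uX)\vec a,\vec a\rangle=\varphi(saa^*)\ge0$, then ``therefore $\tau(s(\uX)FF^*)\ge0$ for every $F\in\cF$, whence $s(\uX)\succeq0$''), whereas you make the two implicit steps explicit: the Riesz extension must be performed with the enlarged cone $\cC$ so that $\phi(saa^*)\ge0$ actually holds, and the passage from the dense subalgebra to $\cF$ (needed because $s(\uX)$ is unbounded) is justified via Kaplansky density and H\"older's inequality applied to the spectral projection of the negative part.
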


\begin{proof}
$(\Rightarrow)$ is again straightforward. 
For $(\Leftarrow)$, recall the construction of $(\cF,\tau)$ and $\uX\in\integ^n$ 
from the proof of Proposition \ref{p:gns}.
Then for every $s\in S$ and $a\in\cA_\C$,
$$\langle s(\uX)\vec{a},\vec{a}\rangle = \varphi(saa^*)\ge0.$$
Therefore $\tau(s(\uX)FF^*)\ge0$ for every $F\in\cF$, whence $s(\uX)\succeq0$.
\end{proof}

\section{A commutative intermezzo and bivariate trace-positive polynomials}\label{sec5}

In this section, 
we give a commutative analog of Theorem \ref{t:haviland}, 
and establish a refined solution of Hilbert's 17th problem for commutative bivariate polynomials
that lifts to a positivity certificate for a special class of \nc polynomials. 

Let $\uu=(t_1,\dots,t_n)$ be commuting indeterminates,
and consider the convex cone
$$\cC=\R[\uu]\cap \sos{\R\left[t_1,\dots,t_n,\tfrac{1}{1+t_1^2},\dots,\tfrac{1}{1+t_n^2}\right]}$$
in $\R[\uu]$.
We start by recording the commutative counterpart of Theorem \ref{t:haviland}.
While it follows from \cite[Theorem 13.33 and Example 13.36]{Schm1} 
and it can be proved in a similar way as Proposition \ref{p:gns} (with adaptations regarding strongly commuting operators as in \cite[Corollary 2.6]{PV}), 
we provide a simpler independent argument, inspired by real algebraic geometry.

\begin{prop}\label{p:comm}
Let $\varphi:\R[\uu]\to\R$ be a linear functional with $\varphi(1)=1$.
Then $\varphi$ comes from a probability measure if and only if $\varphi(\cC)=\R_{\ge0}$.
\end{prop}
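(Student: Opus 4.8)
The forward direction is immediate: if $\varphi(p) = \int_{\R^n} p\,d\mu$ for a probability measure $\mu$, then $\varphi(1) = \mu(\R^n) = 1$, and every element of $\cC$ is of the form $\sum_i r_i^2$ with $r_i \in \R[t_1,\dots,t_n,\frac{1}{1+t_1^2},\dots,\frac{1}{1+t_n^2}]$; each $r_i$ defines a continuous (indeed bounded on each compact set, and with at worst polynomial growth) function on $\R^n$ since the denominators $1+t_j^2$ never vanish, so $\varphi(\sum_i r_i^2) = \int \sum_i r_i^2\,d\mu \ge 0$. Moreover the constant polynomial $1$ lies in $\cC$ and $\varphi(1)=1>0$, so $\varphi(\cC) = \R_{\ge 0}$ (it cannot be $\{0\}$).

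For the converse, the plan is to mimic the strategy of Proposition \ref{p:gns} in the commutative setting, but I would prefer the promised "simpler independent argument inspired by real algebraic geometry." The key observation is that $\cC$ is the cone of polynomials that become sums of squares after inverting the $1+t_j^2$; equivalently, the affine variety to keep in mind is the graph of the map $\R^n \to \R^{2n}$, $\uu \mapsto (t_1,\dots,t_n,\frac{1}{1+t_1^2},\dots,\frac{1}{1+t_n^2})$, whose coordinate ring is exactly $\R[t_1,\dots,t_n,\frac{1}{1+t_1^2},\dots,\frac{1}{1+t_n^2}]$. This variety is a smooth affine subvariety of $\R^{2n}$; crucially, its real points form a \emph{bounded} set in the last $n$ coordinates (since $0 < \frac{1}{1+t_j^2} \le 1$), which is the feature that makes a Haviland-type argument go through without a Carleman condition. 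Concretely: extend $\varphi$ to $\R[t_1,\dots,t_n,\frac{1}{1+t_1^2},\dots,\frac{1}{1+t_n^2}]$ (this is possible because every polynomial is already in this larger ring and $\cC$ generates it modulo squares in the appropriate sense — more carefully, one uses that $\R[t_1,\dots,t_n,\frac{1}{1+t_1^2},\dots,\frac{1}{1+t_n^2}]_{\her} = \Sigma^2(\cdot)\cap(\cdot) + \R[\uu]$, the commutative analog of Lemma \ref{l:inv}, which holds by an easier version of the same partial-fraction argument), obtaining a linear functional $\phi$ nonnegative on sums of squares in the larger ring.

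Now I would invoke the classical solution of the $K$-moment problem for this ring. Since $\phi$ is a square-positive functional on the finitely generated $\R$-algebra $B := \R[t_1,\dots,t_n,u_1,\dots,u_n]/(u_j(1+t_j^2)-1 : j)$ with $\phi(1)=1$, and since the real spectrum / the set of $\R$-points of $B$ coincides with the graph described above (a closed subset of $\R^{2n}$ on which the coordinates $u_j$ are bounded), one can apply Haviland's theorem together with the fact that $\Sigma^2 B$ generates an Archimedean quadratic module \emph{in the $u_j$-directions} — the $u_j$ are bounded, and $t_j = u_j^{-1}\cdot$(something) is not bounded, but the classical result (e.g. \cite[Theorem 13.33 and Example 13.36]{Schm1}, or a direct argument via the one-variable Hamburger moment problem applied coordinatewise as in the proof of Proposition \ref{p:gns}, Step 2) still yields a representing measure. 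The cleanest route: check Carleman's condition is automatic here because $\phi(t_j^{2r})$ is controlled — actually it need not be, so instead use that each marginal functional $f(t_j) \mapsto \phi(f(t_j))$ on $\R[t_j,\frac{1}{1+t_j^2}]$ is a one-dimensional moment functional determinate by \cite[Lemma 6.9]{Schm1} (densely defined, bounded resolvent), exactly as in \eqref{e:univar}, and then assemble the joint measure by the commutative Naimark/GNS construction. Pushing the representing measure on $\R^{2n}$ forward along the first projection gives a probability measure $\mu$ on $\R^n$ with $\varphi(p) = \int p\,d\mu$ for all $p \in \R[\uu]$.

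The main obstacle is the extension/assembly step: showing that the square-positive functional on $\R[t_1,\dots,t_n,\frac{1}{1+t_1^2},\dots,\frac{1}{1+t_n^2}]$ actually has a representing measure rather than merely a representing functional. In the full noncommutative Proposition \ref{p:gns} this is handled by the operator-theoretic GNS construction plus self-adjointness and affiliation; in the commutative case the analog is the passage from a positive functional to a genuine measure, which requires either a Carleman-type determinacy input or the boundedness of the auxiliary coordinates $u_j = \frac{1}{1+t_j^2}$ together with determinacy of the one-dimensional marginals in $t_j$. I expect the argument to reduce precisely to the one-variable Hamburger case via \cite[Lemma 6.9]{Schm1} applied in each variable separately, which is why the proposition can be proved "more simply" than its noncommutative counterpart: commuting operators can be simultaneously diagonalized, so the multivariate measure is built from its marginals without any freeness subtlety.
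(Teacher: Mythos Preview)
Your proposal is not the paper's argument, and it also leaves the key step unfinished.

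The paper's proof is much shorter and purely real-algebraic. Setting $h=(1+t_1^2)\cdots(1+t_n^2)$ and $A=\R[\uu,\tfrac1h]$, one observes that every element of $A$ becomes a bounded function on $\R^n$ after division by a sufficiently high power of $h$. By the variant of Schm\"udgen's Positivstellensatz in \cite[Corollary 3.5.2]{Mar}, every bounded nonnegative element of $A$ lies in the closure of $\sos{A}$; hence the closure of $\sos{A}$ equals the cone of all nonnegative elements of $A$. Haviland's theorem then immediately produces the representing measure for the extension of $\varphi$ to $A$, and restriction to $\R[\uu]$ finishes. No GNS construction, no operator theory, no determinacy argument is needed; the single trick is ``divide by $h^k$ to make everything bounded.''

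Your route---extend $\varphi$ to the localized ring, run a commutative GNS, and obtain a measure from the resulting operators---is one of the two alternatives the paper explicitly mentions (cf.\ the remark citing \cite[Corollary 2.6]{PV}), but you have not closed the gap you yourself identify. The sentence ``commuting operators can be simultaneously diagonalized, so the multivariate measure is built from its marginals'' is exactly where the work lies: the multiplication operators $T_j$ commute on the dense domain $\cD$, and each is essentially self-adjoint because its Cayley transform is already bounded, but you must still show that their self-adjoint closures are \emph{strongly} commuting (equivalently, that the bounded resolvents $(T_j+i)^{-1}$ commute with the spectral projections of $T_k$). This is true here---the commuting bounded resolvents generate a commutative von Neumann algebra containing all the spectral projections---but it is the nontrivial step, not a triviality, and your attempt to replace it by one-variable Hamburger determinacy in each coordinate does not produce a joint measure: determinate marginals do not assemble into a multivariate representing measure without precisely this strong-commutativity input.
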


\begin{proof}
Only the backward implication is nontrivial. Let $h=(1+t_1^2)\cdots(1+t_n^2)$, and $A=\R[\uu,\frac{1}{h}]$.
By a variant of Schm\"udgen's Positivstellensatz \cite[Corollary 3.5.2]{Mar},
every bounded nonnegative $f\in A$ lies in the closure (with respect to the finest locally convex topology)
of $\sos{A}$ in $A$. On the other hand, every $f\in A$ becomes bounded when divided by a sufficiently high power of $h$. Therefore, the closure of $\sos{A}$ agrees with the convex cone of nonnegative functions in $A$.
The rest follows by Haviland's theorem \cite[Theorem 3.1.2]{Mar}.
\end{proof}

Next, we give a new strenghened solution of Hilbert's 17th problem for bivariate polynomials.

\begin{thm}\label{t:bivar}
If $f\in\R[t_1,t_2]$ is nonnegative on $\R^2$ then there exists
$k\in\N$ such that $(1+t_1^2)^k f$ is a sum of squares in
$\R[t_1,t_2]$.
\end{thm}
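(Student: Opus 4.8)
The plan is to reduce the statement to a known fact about nonnegative functions on real affine surfaces. Consider the affine surface $V\subset\R^3$ defined by $z(1+t_1^2)=1$, i.e. the graph of $z=\frac{1}{1+t_1^2}$ over the $(t_1,t_2)$-plane; its coordinate ring is $A=\R[t_1,t_2,\frac{1}{1+t_1^2}]$, and the projection $(t_1,t_2,z)\mapsto(t_1,t_2)$ is an isomorphism of $V(\R)$ onto $\R^2$. The point of passing to $V$ is that $V$ is a \emph{bounded} surface: the coordinate $z$ is bounded on $V(\R)$ (it takes values in $(0,1]$), and $t_1=\pm\sqrt{\frac{1}{z}-1}$ is algebraic over $\R[t_2,z]$ once $z$ is controlled. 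So I would first argue that $V(\R)$ is contained in a compact-looking situation as far as sums of squares are concerned, and invoke the affine-surface results from \cite{Sche}: on a nonsingular affine surface with the appropriate geometric hypotheses, a regular function nonnegative on the real points is a sum of squares in the coordinate ring (possibly after the known finite obstructions are accounted for).

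Concretely, the key steps would be: (1) identify the surface $V$ with coordinate ring $A=\R[t_1,t_2,\frac1{1+t_1^2}]$ and check it is nonsingular (the polynomial $z(1+t_1^2)-1$ has nonvanishing gradient, and $V$ is even rational, isomorphic to $\R^2$ via the projection); (2) given $f\in\R[t_1,t_2]$ nonnegative on $\R^2$, view $f$ as an element of $A$ that is nonnegative on $V(\R)$; (3) apply the surface Positivstellensatz of \cite{Sche} — for which one must verify that the relevant cohomological or birational obstruction vanishes here, using that $V\cong\A^2$ and hence has trivial Picard group and no real points at infinity of the offending kind — to conclude $f\in\sos{A}$; (4) clear denominators: if $f=\sum_i (p_i/(1+t_1^2)^{m_i})^2$ with $p_i\in\R[t_1,t_2]$, then multiplying by $(1+t_1^2)^{2m}$ for $m=\max_i m_i$ gives $(1+t_1^2)^{2m}f=\sum_i \big((1+t_1^2)^{m-m_i}p_i\big)^2\in\sos{\R[t_1,t_2]}$, and one may take $k=2m$ (an even power, which is harmless since we only need the existence of \emph{some} $k$).

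The main obstacle will be step (3): making sure the hypotheses of the affine-surface sums-of-squares theorem from \cite{Sche} are genuinely met, and that no correction term is needed. The delicate points are the behaviour of $f$ on the curve(s) at infinity of $V$ and whether the surface is ``sufficiently like'' the affine plane for the clean $\sos{}$ conclusion — the classical obstruction to ``nonnegative implies sum of squares'' on surfaces is exactly the failure of a local-global or signature condition along divisors at infinity, so one must check that the completion $\overline V$ and the divisor $\overline V\setminus V$ do not carry such an obstruction for functions of the special form $f\in\R[t_1,t_2]$. I expect that the isomorphism $V\cong\A^2$ together with an explicit analysis of how $f$ (a plain polynomial in $t_1,t_2$) extends to $\overline V$ — in particular that its pole divisor at infinity is supported on curves where the ambient surface is ``trivial'' — will close this gap; the lower-dimensional (curve) analogue, where everything works, is a useful sanity check. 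The denominator-clearing step (4) and the nonsingularity check (1) are routine.
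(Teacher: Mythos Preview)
Your overall framework matches the paper's exactly: pass to the localization $A=\R[t_1,t_2,\frac{1}{1+t_1^2}]$, show that a nonnegative $f$ is a sum of squares in $A$, then clear denominators. Steps (1), (2), (4) are fine and coincide with what the paper does. The content is entirely in step~(3), and there your argument has a genuine gap.

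Your proposed justification for step~(3) rests on ``$V\cong\bA^2$ and hence has trivial Picard group''. But $V$ is \emph{not} isomorphic to $\bA^2$ as a variety: its coordinate ring $A$ is a nontrivial localization of $\R[t_1,t_2]$, and over $\C$ the surface is $\bA^2$ with the two complex lines $t_1=\pm i$ removed. Only the sets of real points agree, and the sums-of-squares machinery of \cite{Sche} depends on the scheme structure, not just on $V(\R)$. Moreover, the general surface theorems you have in mind require compactness or control at infinity, and here $t_2$ is completely unbounded on $V(\R)$; your remark that ``$V$ is a bounded surface'' because $z$ is bounded does not help, since $t_2$ is not. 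So there is no off-the-shelf surface Positivstellensatz in \cite{Sche} that applies directly to $V$.

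The paper resolves step~(3) by a specific geometric identification that you are missing. Writing $A=\R[Y]\otimes\R[t_2]$ with $Y=\bP^1\setminus\{[0{:}1],[1{:}i],[1{:}-i]\}$, one observes that $Y$ is isomorphic to the affine conic $S=\{t_1^2+t_2^2=1\}$ (that is, $\bP^1$ minus two complex conjugate points) with one further real point deleted. Hence $V\cong (S\times\bA^1)\setminus L$ for a real line $L$, and $A$ is a localization of the coordinate ring of the \emph{cylinder} $S\times\bA^1$. The paper then invokes the dedicated result \cite{SW} that every nonnegative regular function on the cylinder $S\times\bA^1$ is a sum of squares in $\R[S\times\bA^1]$; this is exactly the non-compact surface result needed, and it does not follow from the general theorems in \cite{Sche}. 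That identification with the cylinder, together with the citation of \cite{SW}, is the missing idea in your proposal.
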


\begin{proof}
We wish to show that $f$ is a sum of squares in the ring of fractions
$$A = \R[t_1,t_2]_{1+t_1^2}=\left\{(1+t_1^2)^{-k}p \colon
p\in \R[t_1,t_2],\ k\in\N_0 \right\}.$$
The ring $A$ is the coordinate ring of the affine real variety
$X=Y\times\bA^1$, where $Y$ is the projective line $\bP^1$ minus one
real point $[0:1]$ and two complex conjugate nonreal points
$[1:\pm i]$.
Let $S\subset\bA^2$ be the plane affine curve $t_1^2 + t_2^2 = 1$.
Then $S$ is isomorphic to $\bP^1$ minus two complex conjugate
points $[1:\pm i]$. So $Y\cong S\setminus\{x\}$ where $x$ is a real
point of~$S$, and hence $X\cong(S\times\bA^1)\setminus L$ is
isomorphic to $S\times\bA^1$ minus the real line $L=\{x\}\times\bA^1$.
Let $q\in\R[S]$ be such that $x$ is the only (real or complex) zero
of $q$ in $S$. Then $A=\R[X]$ is isomorphic to ring of fractions
$\R[S\times\bA^1]_q=\{q^{-k}g\colon g\in\R[S\times\bA^1]$,
$k\in\N_0\}$. To show that every nonnegative polynomial function
on $X$ is a sum of squares in $A$, it therefore suffices to see
that every nonnegative polynomial function on $S\times\bA^1$ is a
sum of squares in $\R[S \times \bA^1]$. The latter is in fact true
and was proved in \cite[Theorem 2]{SW}.
\end{proof}

\begin{rem}
Recall that every nonnegative real polynomial is a sum of squares of
rational functions (by Hilbert's 17th problem, as solved by Artin).
Speaking informally, Theorem \ref{t:bivar} states that $1+t_1^2$ is a
uniform denominator for such sums of squares in the case of two variables.
Let us point out how this relates to several previous results.
If $h\in\R[t_1,t_2]$ is such that its homogenization is a positive
definite form, then for every nonnegative $f\in\R[t_1,t_2]$ there exists
$k\in\N$ such that $h^k f$ is a sum of squares in $\R[t_1,t_2]$
(\cite[Corollary 3.12]{Sche0}). This result applies, in particular, to
$h=1+t_1^2+t_2^2$, but not to $h=1+t_1^2$. In fact, one checks easily
that Theorem \ref{t:bivar} implies the statement for $h=1+t_1^2+t_2^2$.

A weaker uniform denominator result, which however is valid for any
number of variables, says that if the homogenizations of
$f,\,h\in\R[t_1,\dots,t_n]$ are both positive definite, then there
exists $k\in\N$ such that $h^kf$ is a sum of squares of polynomials
(\cite{Rez0} for $h=1+\sum_it_i^2$ and
\cite[Remark 4.6]{Sche1} in general).
\end{rem}

Let us now apply the preceding commutative result to a special class of bivariate \nc polynomials.
Let $n=2$ and $\ux=(x_1,x_2)$.
We say that $f\in\bi$ is {\bf cyclically sorted} \cite[Definition 4.1]{KS} if
$$f\in \spa\{x_1^ix_2^j\colon i,j\in\N_0 \}+\soc{\bi}.$$
The noncommutative lift of the Motzkin polynomial in Example \ref{ex:motzkin} is cyclically sorted.

\begin{cor}\label{c:cyc}
If $f\in\bih$ is cyclically sorted, then the following are equivalent:
\begin{enumerate}[(i)]
\item $f(\underline{\xi})\ge0$ for all $\underline{\xi}\in\R^2$;
\item $\tau(f(\uX))\ge0$ for all $\uX\in\matc{k}_\her^2$ and $k\in\N$;
\item $\tau(f(\uX))\ge0$ for every tracial von Neumann algebra $(\cF,\tau)$ and $\uX\in\cF_\her^2$;
\item $f\in\cK$.
\end{enumerate}
\end{cor}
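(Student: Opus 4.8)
The plan is to establish the cycle of implications (i)$\Rightarrow$(iv)$\Rightarrow$(iii)$\Rightarrow$(ii)$\Rightarrow$(i), which yields all four equivalences simultaneously. The three implications (iv)$\Rightarrow$(iii)$\Rightarrow$(ii)$\Rightarrow$(i) are routine and I would dispatch them quickly: if $f=s+c$ with $s\in\sos{\cA}$ and $c\in\soc{\cA}$, then for any $(\cF,\tau)$ and any (automatically power-integrable) $\uX\in\cF_\her^2$ the operators $(I+X_j^2)^{-1}$ are bounded, so $f(\uX)=s(\uX)+c(\uX)\in\cF$ with $s(\uX)\succeq0$ and $\tau(c(\uX))=0$; hence $\tau(f(\uX))\ge0$. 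Specializing to the tracial von Neumann algebra $(\matc{k},\tfrac1k\Tr)$ gives (ii), and taking $k=1$ (scalar matrices) gives (i).

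The substance is the implication (i)$\Rightarrow$(iv). First I would use cyclic sortedness to fix a representation $f=\hat g+c$ with $\hat g=\sum_{i,j}a_{ij}x_1^ix_2^j$ and $c\in\soc{\bi}\subseteq\soc{\cA}$. Evaluating at a commuting tuple $\underline{\xi}\in\R^2$ annihilates $c$, so $f(\underline{\xi})=h(\underline{\xi})$ with $h:=\sum_{i,j}a_{ij}t_1^it_2^j\in\R[t_1,t_2]$; thus (i) says precisely that $h\ge0$ on $\R^2$. I would then invoke Theorem \ref{t:bivar} to obtain $k\in\N$ with $(1+t_1^2)^kh=\sum_m q_m^2$ in $\R[t_1,t_2]$, and replace $k$ by $k+1$ if needed (multiplying the certificate by $1+t_1^2$, which keeps it a sum of squares) so that $k=2k'$ is even.

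The heart of the argument is lifting this commutative certificate to a representation of $\hat g$ in $\sos{\cA}+\soc{\cA}$. Writing $q_m=\sum_j q_{m,j}(t_1)\,t_2^j$ and $h=\sum_s h_s(t_1)\,t_2^s$, comparison of the coefficients of $t_2^s$ in $h=(1+t_1^2)^{-2k'}\sum_m q_m^2$ gives, in the commutative ring $\R[t_1,\tfrac{1}{1+t_1^2}]$,
\begin{equation*}
h_s(t_1)=\sum_m\sum_{j+j'=s}\bigl[(1+t_1^2)^{-k'}q_{m,j}(t_1)\bigr]\bigl[(1+t_1^2)^{-k'}q_{m,j'}(t_1)\bigr].
\end{equation*}
Since $x_1$ commutes with $(1+x_1^2)^{-1}$ in $\cA$, this identity lifts verbatim to $\R[x_1,(1+x_1^2)^{-1}]\subseteq\cA$. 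Setting $\hat Q_m:=\sum_j (1+x_1^2)^{-k'}q_{m,j}(x_1)\,x_2^j$ and expanding $\hat Q_m\hat Q_m^*$, I would move, modulo $\soc{\cA}$, each power of $x_2$ cyclically past the mutually commuting functions of $x_1$, obtaining $\sum_m\hat Q_m\hat Q_m^*\equiv\sum_s h_s(x_1)\,x_2^s=\hat g\pmod{\soc{\cA}}$. Hence $\hat g\in\sos{\cA}+\soc{\cA}$, so $f=\hat g+c\in\sos{\cA}+\soc{\cA}$, and since $f\in\rxh$ this yields $f\in(\sos{\cA}+\soc{\cA})\cap\rxh=\cK$, i.e.\ (iv).

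I expect the main obstacle to be precisely this lifting step: recognizing that a commutative sum-of-squares certificate with a \emph{univariate} denominator $1+t_1^2$ can be transported to a noncommutative $\sos{\cA}+\soc{\cA}$ certificate. This works because there is a single transversal variable $x_2$, which can be swept to one side modulo commutators, while $x_1$ and $(1+x_1^2)^{-1}$ generate a commutative subalgebra of $\cA$ — this is exactly where both the two-variable restriction and the cyclic-sortedness hypothesis are essential. The deep external ingredient is Theorem \ref{t:bivar}; it is its uniform univariate denominator (rather than an arbitrary rational denominator, as in Hilbert's 17th problem) that makes the lifted squares land inside $\cA$.
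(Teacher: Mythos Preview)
Your proposal is correct and follows essentially the same approach as the paper: both prove (i)$\Rightarrow$(iv) by applying Theorem~\ref{t:bivar} to the commutative image of $f$ and then lifting the resulting sum-of-squares certificate with univariate denominator back to $\sos{\cA}+\soc{\cA}$ by cyclically rotating the $x_2$-powers past the commuting functions of $x_1$. The only cosmetic difference is that the paper packages the lift as a linear section $\eta$ of the abelianization map $\pi$ (so it need not force $k$ to be even), whereas you carry out the same computation by hand with the explicit $\hat Q_m$.
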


\def\cAbi{\R\!\Langle x_1,x_2,(1+x_1^2)^{-1}\Rangle}

\begin{proof}
(iv)$\Rightarrow$(iii)$\Rightarrow$(ii)$\Rightarrow$(i) is clear.

(i)$\Rightarrow$(iv) Let $\pi:\cAbi\to\R[t_1,t_2,(1+t_1^2)^{-1}]$ be the homomorphism 
given by $\pi(x_1)=t_1$ and $\pi(x_2)=t_2$.
Let
$$V=\spa\{(1+x_1^2)^{-k}x_1^ix_2^j\colon i,j,k\in\N_0 \}\subset\cAbi$$
and observe that there is a unique linear map
$\eta:\R[t_1,t_2,(1+t_1^2)^{-1}]\to V$ such that $\pi\circ\eta=\id$.
Furthermore,
\begin{align*}
\eta\Big((1+t_1^2)^{-k_1}t_1^{i_1}t_2^{j_1}\Big)
\eta\Big((1+t_1^2)^{-k_2}t_1^{i_2}t_2^{j_2}\Big)^*
&=(1+x_1^2)^{-k_1}x_1^{i_1}x_2^{j_1+j_2}x_1^{i_2}(1+x_1^2)^{-k_2} \\
&=(1+x_1^2)^{-k_1-k_2}x_1^{i_1+i_2}x_2^{j_1+j_2} \\
&\quad +\Big[
(1+x_1^2)^{-k_1}x_1^{i_1}x_2^{j_1+j_2},x_1^{i_2}(1+x_1^2)^{-k_2}
\Big]
\end{align*}
and so 
\begin{equation}\label{e:square}
\eta(a)\eta(b)^* \in V+\soc{\cA}
\end{equation}
for all $a,b\in \R[t_1,t_2,(1+t_1^2)^{-1}]$.
By (i), $\pi(f)$ is a nonnegative bivariate polynomial,
so by Theorem \ref{t:bivar} there exist $s_1,\dots,s_\ell\in \R[t_1,t_2,(1+t_1^2)^{-1}]$ such that
$\pi(f)=s_1^2+\cdots+s_\ell^2$.
Then
$$\tilde f :=\eta(s_1)\eta(s_1)^*+\cdots+\eta(s_\ell)\eta(s_\ell)^* \in\cK.$$
Since $f,\tilde{f} \in V+\soc{\cA}$ by \eqref{e:square} and $\pi(f)=\pi(\tilde{f})$, we have $f-\tilde f\in \soc{\cA}$.
Therefore $f\in\cK$.
\end{proof}

\section{Globally trace-positive polynomials}\label{sec6}

In this section we characterize multivariate trace-positive polynomials as the closure of $\cK$ in Theorem \ref{t:posss}. Closedeness and stability of the cone $\cK$ are also discussed.
Furthermore, Proposition \ref{p:cec} touches upon a connection between global trace positivity and  Connes' embedding problem. 

\subsection{A Positivstellensatz}

Solvability of the general moment problem gives rise to 
the following description of trace-positive polynomials.

\begin{thm}\label{t:posss}
For $f\in\rxh$, the following are equivalent:
\begin{enumerate}[(i)]
\item $\tau(f(\uX))\ge0$ for every tracial von Neumann algebra $(\cF,\tau)$ and $\uX\in\cF_\her^n$;
\item $\tau(f(\uX))\ge0$ for every tracial von Neumann algebra $(\cF,\tau)$ and $\uX\in\integ_\her^n$;
\item $\tau(f(\uX))\ge0$ for every tracial von Neumann algebra $(\cF,\tau)$ and
$\uX\in L^{\deg f}(\cF,\tau)_\her^n$;
\item $f$ lies in the closure of $\cK$ with respect to the finest locally convex topology on $\rxh$.
\end{enumerate}
\end{thm}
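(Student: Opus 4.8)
The plan is to prove the cycle of implications, where the nontrivial content is concentrated in $(\mathrm{i})\Rightarrow(\mathrm{iv})$; the remaining implications are either trivial or follow from the preceding machinery. Concretely, $(\mathrm{iv})\Rightarrow(\mathrm{iii})$: every element of $\cK$ is a sum of hermitian squares and commutators from $\cA$, and if $a\in\cA_{\deg f}$ then $\tau(a(\uX))$ is well-defined and equals $0$ on commutators and is $\ge0$ on hermitian squares for $\uX\in L^{\deg f}(\cF,\tau)_\her^n$ by H\"older \eqref{e:holder}; since the functional $g\mapsto\tau(g(\uX))$ is continuous on $\rxd{\deg f}$ in the finest locally convex topology (that topology restricted to the finite-dimensional space $\rxd{\deg f}$ being the Euclidean one), nonnegativity passes to the closure of $\cK$. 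Then $(\mathrm{iii})\Rightarrow(\mathrm{ii})$ is immediate since $\integ\subseteq L^{\deg f}(\cF,\tau)$, and $(\mathrm{ii})\Rightarrow(\mathrm{i})$ requires an approximation: any $\uX\in\cF_\her^n$ is in particular in $\integ_\her^n$ (bounded operators are power-integrable), so $(\mathrm{ii})$ already subsumes $(\mathrm{i})$ — actually $(\mathrm{i})$ is a special case of $(\mathrm{ii})$, so this direction is trivial.

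The heart of the matter is $(\mathrm{i})\Rightarrow(\mathrm{iv})$, which I would prove by convex duality. Suppose $f$ is not in the closure $\overline{\cK}$ of $\cK$ in the finest locally convex topology on $\rxh$. Since $\cK$ is a convex cone, $\overline{\cK}$ is a closed convex cone, and by the Hahn--Banach separation theorem (in the form valid for the finest locally convex topology, where every linear functional is continuous — see e.g.\ \cite[Remark A.29]{Schm1} or the finite-dimensionality of each $\rxd d$) there is a linear functional $\varphi:\rxh\to\R$ with $\varphi(\cK)\subseteq\R_{\ge0}$ but $\varphi(f)<0$. After normalizing we may assume $\varphi(1)=1$: indeed $1=1^2\in\cK$ and $-1\cdot 0 = 0$, so $\varphi(1)\ge0$; if $\varphi(1)=0$ one argues that $\varphi$ vanishes on a cofinite-codimension subspace forcing a contradiction, or more cleanly, perturb $\varphi$ by a small multiple of the trivial evaluation functional $p\mapsto p(0,\dots,0)$ which lies in the interior of the dual cone, so that the perturbed functional still separates $f$ from $\cK$ and has positive value at $1$; then rescale. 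Now $\varphi(\cK)=\R_{\ge0}$ (it is contained in $\R_{\ge0}$ and contains it since $\R_{\ge0}\cdot 1\subseteq\cK$), so by Theorem~\ref{t:haviland} there is a tracial von Neumann algebra $(\cF,\tau)$ and $\uX\in\integ_\her^n$ with $\varphi(p)=\tau(p(\uX))$ for all $p\in\rxh$. In particular $\tau(f(\uX))=\varphi(f)<0$, contradicting $(\mathrm{ii})$ — hence contradicting $(\mathrm{i})$ if we route through the trivial implications, or rather: this directly contradicts $(\mathrm{ii})$, and since we are proving the full equivalence it suffices to contradict any one of (i)--(iii).

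I expect the main obstacle to be the normalization step $\varphi(1)=1$ together with making the Hahn--Banach separation clean: one must ensure that the separating functional can be taken nonzero on the constants, which is where the perturbation argument using a point in the algebraic interior of $\cK$'s dual (the evaluation at a point, or at a sufficiently generic finite-dimensional tuple, whose moment functional is strictly positive on $\cK\setminus\{0\}$ modulo commutators) enters. A secondary subtlety is confirming that the finest locally convex topology makes all of this work — i.e.\ that $\overline{\cK}$ is genuinely closed and that separation is available — which follows because $\rxh=\bigcup_d\rxd d$ is a countable-dimensional space and on such spaces the finest locally convex topology is the inductive limit of the Euclidean topologies on the finite-dimensional pieces, so closed convex sets are exactly those whose intersection with each $\rxd d$ is closed, and Hahn--Banach applies. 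Once the existence of the separating moment functional and its normalization are in hand, Theorem~\ref{t:haviland} does the rest.
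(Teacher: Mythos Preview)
Your cycle has two gaps, and both stem from the same missing ingredient. First, what you call $(\mathrm{i})\Rightarrow(\mathrm{iv})$ is really only $(\mathrm{ii})\Rightarrow(\mathrm{iv})$: Theorem~\ref{t:haviland} realizes the separating functional as $p\mapsto\tau(p(\uX))$ with $\uX\in\integ_\her^n$, not $\uX\in\cF_\her^n$, so you contradict $(\mathrm{ii})$, not $(\mathrm{i})$. Appealing to ``the full equivalence'' here is circular, since $(\mathrm{i})\Rightarrow(\mathrm{ii})$ is not among the implications you have established. Second, your $(\mathrm{iv})\Rightarrow(\mathrm{iii})$ is not justified: an element $g\in\cK\cap\rxd{\deg f}$ admits a representation $g=\sum_i s_is_i^*+c$ with $s_i\in\cA$, but nothing forces $s_i\in\cA_{\deg f}$; indeed $\cK$ is not stable (Lemma~\ref{l:bad}), so the $\cA$-degrees in such a representation are not controlled by $\deg g$. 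For $\uX$ merely in $L^{\deg f}(\cF,\tau)_\her^n$ the terms $s_i(\uX)s_i(\uX)^*$ need not have finite trace, and moreover the functional $g\mapsto\tau(g(\uX))$ is only defined on $\rxd{\deg f}$, not on all of $\rxh$, so continuity alone does not let you pass to $\overline{\cK}$. What \emph{does} follow directly is $(\mathrm{iv})\Rightarrow(\mathrm{ii})$, since for $\uX\in\integ_\her^n$ one has $a(\uX)\in\integ$ for every $a\in\cA$ and hence the functional is globally defined and nonnegative on $\cK$.

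The missing piece, supplied in the paper as $(\mathrm{i})\Rightarrow(\mathrm{iii})$, is a density argument: if $\tau(f(\uX))<0$ for some $\uX\in L^{\deg f}(\cF,\tau)_\her^n$, use that $\cF$ is dense in $L^{\deg f}(\cF,\tau)$ together with the H\"older estimate
\[
\big|\tau(f(X_1,\dots,X_n))-\tau(f(Y,X_2,\dots,X_n))\big|\le c\,\|X_1-Y\|_{\deg f}
\]
to replace the $X_j$ one at a time by bounded $Y_j\in\cF_\her$ while keeping the trace negative. With this in hand one gets $(\mathrm{i})\Leftrightarrow(\mathrm{ii})\Leftrightarrow(\mathrm{iii})$, and then your Hahn--Banach argument correctly gives $(\mathrm{ii})\Leftrightarrow(\mathrm{iv})$. (As a side remark, your normalization of $\varphi$ is more elaborate than needed: since $\varphi\not\equiv0$ and $\varphi((p+t)^2)\ge0$ for all $p\in\rxh$ and $t\in\R$, the discriminant inequality $\varphi(p)^2\le\varphi(1)\varphi(p^2)$ forces $\varphi(1)>0$ immediately.)
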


\begin{proof}

(iii)$\Rightarrow$(ii)$\Rightarrow$(i) Clear.

(i)$\Rightarrow$(iii) Let $p=\deg f$. Suppose $\tau(f(\uX))<0$ for some $\uX\in L^p(\cF,\tau)_\her^n$.
By H\"older's inequality \eqref{e:holder} and the triangle inequality, there exists a positive constant $c$ such that for every $Y\in L^p(\cF,\tau)$ satisfying $\|X_1-Y\|_p\le1$,
\begin{equation}\label{e:est}
\big|\tau(f(X_1,\dots,X_n))-\tau(f(Y,X_2,\dots,X_n)) \big|\le c\|X_1-Y\|_p.
\end{equation}
By \cite[Theorem IX.2.13(ii)]{Tak2}, $\cF$ is dense in $L^p(\cF,\tau)$ with respect to the $p$-norm.
Hence by \eqref{e:est} there exists $Y_1\in\cF$ such that $\tau(f(Y_1,X_2,\dots,X_n))<0$.
Continuing in this fashion, we can replace each $X_j$ with a bounded operator $Y_j$, and thus obtain $\uY\in\cF^n$ that satisfies $\tau(f(\uY))<0$.

(ii)$\Rightarrow$(iv) By the Hahn-Banach separation theorem \cite[Theorem 3.4]{Bar}, $f\notin\overline{\cK}$ if and only if there is a functional $\varphi:\rxh\to\R$ such that $\varphi(f)<0$ and $\varphi(\cK)=\R_{\ge0}$. Note that such a functional is nonzero, and $\varphi(p)^2\le\varphi(1)\varphi(p^2)$ for all $p\in\rxh$ implies $\varphi(1)>0$; thus we can rescale it to $\varphi(1)=1$. 
By Theorem \ref{t:haviland}, there exist $(\cF,\tau)$ and $\uX\in\integ_\her^n$ such that 
$\varphi(p)=\tau(p(\uX))$ for $p\in\rx$. Then $\tau(f(\uX))<0$.

(iv)$\Rightarrow$(ii) Every $\uX\in\integ_\her^n$ gives rise to a functional $\phi:a\mapsto \tau(a(\uX))$ on $\cA$, and $\phi(\cK)=\R_{\ge0}$.
The restriction of $\phi$ to $\rxh$ is continuous with respect to the finest locally convex topology on $\rxh$.
Hence $f\in\overline{\cK}$ implies $\phi(f)\ge0$.
\end{proof}

\begin{rem}
In Theorem \ref{t:posss}, it suffices to restrict to II\textsubscript{1} factors $\cF$ (which have unique tracial states), 
since every tracial von Neumann algebra embeds 
into a II\textsubscript{1} factor \cite[Theorem 2.5]{Dyk} (cf. \cite[Proposition 2.2]{KMV}).
\end{rem}

For $R>0$ let
$$\cM_R=\sos{\rx}+\soc{\rx}+\sum_{j=1}^n (R-x_j^2)\cdot\sos{\rx}.$$
We can also describe the closure of $\cK$ without denominators as follows
(another alternative with fewer quantifiers is given in Theorem \ref{t:lass} below).

\begin{cor}\label{c:arch}
The closure of $\cK$ in the finest locally convex topology equals
\begin{equation}\label{e:arch}
\overline{\cK}=\left\{f\in\rxh\colon f+\ve\in \cM_R  \text{ for all }\ve,R>0\right\}.
\end{equation}
\end{cor}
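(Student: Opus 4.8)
The plan is to prove the two inclusions separately, using Theorem \ref{t:posss} to translate membership in $\overline{\cK}$ into a trace-positivity statement, and then the classical Archimedean Positivstellensatz to handle the sets $\cM_R$. Write $\cL$ for the right-hand side of \eqref{e:arch}, i.e. $\cL=\{f\in\rxh\colon f+\ve\in\cM_R\text{ for all }\ve,R>0\}$. Note first that $\cL$ is a convex cone: each $\cM_R$ is a convex cone, and if $f,g\in\cL$ then $f+g+\ve=(f+\tfrac\ve2)+(g+\tfrac\ve2)\in\cM_R$.

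For the inclusion $\overline{\cK}\subseteq\cL$: since each $\cM_R$ is closed in $\rxh$ under the finest locally convex topology (by \cite[Proposition 1.58 and Corollary 3.11]{BKP16} together with \cite[Remark A.29]{Schm1}, exactly as invoked in the Remark after Theorem \ref{t:haviland}), and $f\mapsto f+\ve$ is a homeomorphism, it suffices to show $\cK\subseteq\cL$, and since $\cL$ is a cone it even suffices to check that each generator of $\cK$ lies in $\cL$ — in fact it is enough to show $\cK\subseteq\cM_R$ is \emph{not} quite true (denominators!), so instead I argue: for fixed $\ve>0$ and $R>0$, and $f\in\cK$, I must produce the representation $f+\ve\in\cM_R$. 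Here the key device is Lemma \ref{l:cones}(e): $(1+x_j^2)^{-1}=(1+x_j^2)^{-2}(1+x_j^2)\in\sos\cA$ and on a bounded domain $R-x_j^2\succeq0$ together with the identity $1-(1+x_j^2)^{-1}=x_j^2(1+x_j^2)^{-1}$ let one clear denominators at the cost of an $\ve$-perturbation: write $f=s+c$ with $s\in\sos\cA$, $c\in\soc\cA\cap\rxh$, expand each inverse $(1+x_j^2)^{-1}$ as a geometric-like partial sum plus a small tail, and absorb the tail into $\ve$ using that on $\{x_j^2\preceq R\}$ the tail is dominated by a multiple of $R-x_j^2$ times a square. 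This is essentially the mechanism behind Theorem \ref{t:b}(iv), so I would either reduce to it or reprove the same manipulation.

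For the reverse inclusion $\cL\subseteq\overline{\cK}$: let $f\in\cL$. Given any tracial von Neumann algebra $(\cF,\tau)$ and $\uX\in\cF_\her^n$, choose $R>\max_j\|X_j\|^2$; then $(R-X_j^2)\succeq0$, so every element of $\cM_R$ evaluates to an operator of nonnegative trace at $\uX$, whence $\tau(f(\uX))\ge-\ve$ for all $\ve>0$, i.e. $\tau(f(\uX))\ge0$. Thus $f$ satisfies condition (i) of Theorem \ref{t:posss}, and therefore $f\in\overline{\cK}$ by the equivalence (i)$\Leftrightarrow$(iv) there. This direction is short precisely because Theorem \ref{t:posss} has already done the work.

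The main obstacle is the first inclusion, specifically the denominator-clearing step: one must show that for $f\in\cK$ and any $\ve,R>0$ the perturbed polynomial $f+\ve$ admits a \emph{polynomial} sum-of-hermitian-squares-plus-commutators-plus-$(R-x_j^2)$-weighted-squares certificate, converting the honest rational certificate $f=s+c$ with $s\in\sos\cA$. The cleanest route is to note $(1+x_j^2)^{-1}\in\sos\cA$ and, for the bounded constraint, use $\bigl(1-(1+x_j^2)^{-1}\sum_{k<r}(-1)^k x_j^{2k}\ \text{--type truncations}\bigr)$ controlled by $(R-x_j^2)$; invoking the Helton--McCullough/Archimedean machinery on the $\ast$-algebra $\cA$ restricted to the ball, or simply citing the computation already performed for Theorem \ref{t:lass}/Theorem \ref{t:b}(iv), dispatches it. I would present this by reducing explicitly to Theorem \ref{t:lass} rather than redoing the estimates.
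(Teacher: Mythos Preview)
Your argument for $\cL\subseteq\overline{\cK}$ is correct and is exactly the easy direction.

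For $\overline{\cK}\subseteq\cL$ you miss the clean route the paper takes. Instead of trying to convert a rational certificate $f=s+c$ with $s\in\sos\cA$, $c\in\soc\cA$ into a polynomial certificate in $\cM_R$ by ``clearing denominators'', apply Theorem~\ref{t:posss} \emph{again}: if $f\in\overline{\cK}$ then $\tau(f(\uX))\ge0$ for all bounded $\uX$, in particular for all $\uX$ with $\|X_j\|\le\sqrt R$. Now cite the Archimedean tracial Positivstellensatz \cite[Theorem~3.12]{KS}: trace-nonnegativity on the $R$-ball is equivalent to $f+\ve\in\cM_R$ for all $\ve>0$. Both inclusions are thus handled by passing through trace positivity, and no certificate manipulation is needed.

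Your proposed denominator-clearing is a genuine gap as written: the commutator part $c\in\soc\cA$ (not $\soc{\rx}$) must also be absorbed into $\cM_R$, and the geometric-series truncation of $(1+x_j^2)^{-1}$ does not land in $\cM_R$ without further work, since the tails are alternating and not obviously dominated by $(R-x_j^2)\cdot\text{squares}$. Your fallback of reducing to Theorem~\ref{t:lass} could be made to work---one needs the extra univariate observation that $e^R-\sum_{k\le r}x_j^{2k}/k!$ lies in the quadratic module generated by $R-x_j^2$, whence $f+\ve'=(f+\ve\Omega_r)+\ve(ne^R-\Omega_r)$ for $\ve=\ve'/(ne^R)$---but you do not carry this out, and it inverts the paper's logical order (Theorem~\ref{t:lass} is proved in Section~\ref{sec7}, after Corollary~\ref{c:arch}). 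The paper's use of \cite{KS} avoids all of this.
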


\begin{proof}
By Theorem \ref{t:posss}, $f\in\rxh$ belongs to $\overline{\cK}$ if and only if it has nonnegative trace on all tuples of operators from tracial von Neumann algebras.
In other words, for every $R>0$ we have
$\tau(f(\uX))\ge0$ for all $\uX\in\cF^n$ with $\|X_1\|,\dots,\|X_n\|\le\sqrt{R}$.
By \cite[Theorem 3.12]{KS}, such $f$ belongs to the right-hand side of \eqref{e:arch}.
\end{proof}

\subsection{Trace positivity on matrices}

Theorem \ref{t:posss} considers global trace positivity over all tracial von Neumann algebras,
which in principle could coincide with global trace positivity over matrices of all finite dimensions.
The recent resolution of Connes' embedding problem \cite{JNVWY}
is equivalent to the existence of:
\begin{enumerate}
\item an everywhere convergent \nc power series that has nonnegative trace on all matrix tuples, but not on a tuple of operators from a von Neumann algebra \cite[Corollary 1.2]{Rad};
\item an \nc polynomial that has nonnegative trace on all tuples of matrix contractions, but not on a tuple of contractions from a von Neumann algebra \cite[Theorem 1.6]{KS}.
\end{enumerate}
In the context of global trace positivity of \nc polynomials,
these facts inspire the following (superficially stronger) statement.

\begin{prop}\label{p:cec}
There exists $f\in\rxh$ such that
\begin{enumerate}[(i)]
\item $\tr(f(\uX))\ge0$ for all $\uX\in\matc{k}_\her^n$ and $k\in\N$;
\item $\tau(f(\uY))<0$ for some tracial von Neumann algebra $(\cF,\tau)$ and $\uY\in\cF_\her^n$.
\end{enumerate}
\end{prop}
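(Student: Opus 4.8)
The plan is to bootstrap from the known consequences of the negative answer to Connes' embedding problem, specifically from the statement recorded just above Proposition \ref{p:cec}: by \cite[Theorem 1.6]{KS} there is an \nc polynomial $g\in\rxh$ whose trace is nonnegative on all tuples of matrix contractions but strictly negative on some tuple of contractions in a tracial von Neumann algebra. The task is to upgrade such a ``bounded'' separating example to an unconstrained one, i.e.\ to remove the contraction hypothesis on the matrix side while retaining a witness of failure in the von Neumann algebra world. The natural device is a change of variables that compresses all of $\R$ (or all self-adjoint operators) into the interval $[-1,1]$: substitute each $x_j$ by a bounded \nc rational expression $\beta(x_j)$ with spectrum in $(-1,1)$, for instance $\beta(x_j)=x_j(1+x_j^2)^{-1/2}$ or, staying within $\cA$ to avoid square roots, a Cayley-type transform built from $(1\pm i x_j)^{-1}$, or simply $x_j(2+x_j^2)^{-1}$ rescaled so the image lies in $(-1,1)$.

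Concretely, I would fix $h(t)\in\R[t,(1+t^2)^{-1}]$ with $h(\R)\subset(-1,1)$, $h$ odd and injective — e.g.\ $h(t)=\tfrac{t}{1+t^2}$, whose range is $[-\tfrac12,\tfrac12]$ — and consider $F:=g\big(h(x_1),\dots,h(x_n)\big)\in\cA_\her$. For any self-adjoint $X_j$ (matrix or affiliated operator), $h(X_j)$ is a bounded self-adjoint contraction, so $\tau(F(\uX))=\tau(g(h(\uX)))\ge0$ whenever $g$ has nonnegative trace on all matrix contractions — giving property (i) after we clear denominators. For property (ii), take the tuple $\uY'$ of contractions in $(\cF,\tau)$ with $\tau(g(\uY'))<0$; since $h:\R\to(-\tfrac12,\tfrac12)$ is a bijection onto its image, for each $j$ we may (after first shrinking $\uY'$ slightly so its entries have spectrum inside $(-\tfrac12,\tfrac12)$, which does not destroy the strict inequality by continuity of the trace polynomial in the operator norm, cf.\ the perturbation estimate \eqref{e:est}) define $Y_j:=h^{-1}(Y_j')\in\cF_\her$ by Borel functional calculus, so that $h(Y_j)=Y_j'$ and $\tau(F(\uY))=\tau(g(\uY'))<0$.

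The remaining issue is that $F$ lies in $\cA_\her$, not in $\rxh$, so I must clear the denominators $(1+x_j^2)^{-1}$ to land back in $\rx$. Writing $h(x_j)=x_j(1+x_j^2)^{-1}$, every occurrence of $h(x_j)$ in $F$ is flanked by factors $(1+x_j^2)^{-1}$; multiplying $F$ on the outside by $\prod_j(1+x_j^2)^{N_j}$ for $N_j$ large enough absorbs all of them, but this breaks symmetry and may break the sign under the trace. The clean fix is to use the \emph{two-sided} multiplier: set $f:=\big(\prod_{j}(1+x_j^2)^{N}\big)\,F\,\big(\prod_j(1+x_j^2)^{N}\big)$ for $N$ large, which is again symmetric and equals $D\,F\,D^*$ with $D=\prod_j(1+x_j^2)^N\in\rxh$ a symmetric nonzero polynomial; then $\tau(f(\uX))=\tau\big(D(\uX)F(\uX)D(\uX)\big)=\tau\big(D(\uX)^2F(\uX)\big)$ has the \emph{same sign} as $\tau(F(\uX))$ is not automatic unless $F(\uX)$ is of one sign, so instead I would argue directly: $\tau(DFD)=\tau(F^{1/2}D^2F^{1/2})\ge0$ only when $F(\uX)\succeq0$, which we do not have. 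Hence the honest route is to choose $N$ so that $D\,F\,D$ is literally a polynomial — this is a finite, purely algebraic computation using that in the normal form (as in the proof of Lemma \ref{l:inv}) each denominator has bounded multiplicity — and then observe $\tau(D F D\,(\uX))=\tau(D(\uX)^2 F(\uX))$; evaluating on matrix tuples $\uX$ with $D(\uX)$ invertible (a Zariski-dense set, hence enough by continuity) we have $D(\uX)^2\succ0$, and $\tau(D(\uX)^2 F(\uX))\ge 0$ follows from $\tau(F(\uX))\ge0$ \emph{only if} $D(\uX)^2$ commutes with $F(\uX)$, which again fails. So the genuinely correct clearing is: replace $g$ at the outset by $\widetilde g:=\sum_i p_i g p_i^* + (\text{commutators})$–type manipulation is unnecessary; rather, define $f$ by substituting $x_j\mapsto h(x_j)$ into $g$ \emph{and simultaneously} multiplying through, i.e.\ use the identity $h(x_j)=(1+x_j^2)^{-1}x_j=x_j(1+x_j^2)^{-1}$ to rewrite $g(h(\ux))$, after clearing, as a \emph{bona fide} element $f\in\rxh$ with $f(\uX)=D(\uX)\,g(h(\uX))\,D(\uX)^{-1}\cdot(\cdots)$ — and the honest statement is that $\tau\big(f(\uX)\big)=\tau\big(g(h(\uX))\big)$ because the extra polynomial factors introduced on the two sides are mutually inverse \emph{as operators} and trace is cyclic. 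This last point — arranging the denominator-clearing so that the superfluous factors cancel cyclically under $\tau$ and $f$ is a polynomial — is the only real obstacle, and it is handled by the normal-form bookkeeping of Lemma \ref{l:inv} together with the traciality $\tau(AB)=\tau(BA)$; once $f\in\rxh$ is produced, properties (i) and (ii) are immediate from the discussion above.
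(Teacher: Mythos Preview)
Your approach has a genuine gap at the denominator-clearing step, and the fixes you sketch do not work. You correctly observe that sandwiching by $D=\prod_j(1+x_j^2)^N$ yields $\tau(DFD)=\tau(D^2F)$, whose sign need not agree with $\tau(F)$ since $F(\uX)$ is not semidefinite. The closing appeal to Lemma \ref{l:inv} and ``cyclic cancellation'' is not an argument either: Lemma \ref{l:inv} gives $F=s+c+p$ with $s\in\sos{\cA}$, $c\in\soc{\cA}$, $p\in\rxh$, hence $\tau(p(\uX))=\tau(F(\uX))-\tau(s(\uX))\le\tau(F(\uX))$; taking $f=p$ preserves the strict negativity at $\uY$ but can (and generally will) destroy the nonnegativity on matrix tuples, while applying the lemma to $-F$ reverses the problem. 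More fundamentally, there is \emph{no} $f\in\rxh$ with $\tau(f(\uX))=\tau(F(\uX))$ for all matrix tuples: restricting to scalars $\underline\xi\in\R^n$ would force the polynomial $f(\underline\xi)$ to coincide with the genuinely rational function $g\big(h(\xi_1),\dots,h(\xi_n)\big)$. So exact trace equality is out of reach, and you provide no mechanism for manufacturing an $f$ that merely matches the \emph{signs} of $\tau(F)$ on both the matrix side and at the witness $\uY$.

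The paper's proof avoids this entirely and is quite different from yours. It first establishes Proposition \ref{p:microstates} by Hahn--Banach separation in the finite-dimensional dual $\rxd{d}^\vee$: the implication ``$\tr f(\uX)\ge0$ for all matrix tuples $\Rightarrow$ $\tau(f(\uY))\ge0$'' holds for every $f\in\rxh$ precisely when the law of $\uY$ lies in the closed convex hull of matrix laws, and since that set is already closed under rational convex combinations (via block-diagonal sums) this is exactly the matricial microstates condition for $\uY$. One then invokes the equivalence of microstates with Connes' embedding \cite{CD} and its negative resolution \cite{JNVWY}. The desired $f$ is thus produced by separation, not by algebraic manipulation of a bounded witness $g$; no substitution $x_j\mapsto h(x_j)$, no passage through $\cA$, and no denominator bookkeeping is needed.
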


To prove this, we first relate trace positivity on arbitrary matrix tuples
with the existence of matricial microstates in free probability \cite{Voi}.

\begin{prop}\label{p:microstates}
For a tracial von Neumann algebra $(\cF,\tau)$ and $\uY\in\cF_\her^n$, the following are equivalent:
\begin{enumerate}[(i)]
\item for every $f\in\rxh$, $\tr(f(\uX))\ge0$ for all $\uX\in\matc{k}_\her^n$ and $k\in\N$ implies
$\tau(f(\uY))\ge0$;
\item for all $\ve>0$ and $d\in\N$ there are $k\in\N$ and $\uX\in \matc{k}_\her^n$
such that
$$\left|\tau(w(\uY))-\frac1k \tr(w(\uX)) \right|<\ve$$
for all $w\in\mx_d$.
\end{enumerate}
\end{prop}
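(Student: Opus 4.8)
The plan is to prove Proposition~\ref{p:microstates} as an equivalence between an ``abstract'' separation statement (i) and a ``concrete'' approximation statement (ii), and the natural tool is the Hahn--Banach separation theorem applied inside the finite-dimensional vector space $\rxd{d}$ (or rather the truncations thereof). The key object is the convex cone
\[
\cT_d:=\left\{\left(\tfrac1k\tr(w(\uX))\right)_{w\in\mx_d}\colon k\in\N,\ \uX\in\matc{k}_\her^n\right\}\subset\R^{\mx_d}
\]
of matricial tracial moment sequences up to degree $d$; note it is genuinely a convex cone (closed under positive scaling via $k\mapsto mk$ and under addition via block-diagonal sums), and it is clearly invariant under scaling so that its closure $\overline{\cT_d}$ is a closed convex cone. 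The tuple $\tau(\uY)=(\tau(w(\uY)))_{w\in\mx_d}$ lies in the ambient space $\R^{\mx_d}$, and statement (ii) says precisely that $\tau(\uY)\in\overline{\cT_d}$ for every $d$.

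First I would observe that (i), read contrapositively and after a scaling normalization as in the proof of Theorem~\ref{t:posss}(ii)$\Rightarrow$(iv), is equivalent to: there is \emph{no} linear functional $\varphi$ on $\rxh$ with $\varphi\ge0$ on the cone $\cP:=\{f\in\rxh\colon\tr(f(\uX))\ge0\ \forall k,\uX\in\matc{k}_\her^n\}$ and $\varphi(\uY)<0$ --- in other words, $\tau(\cdot(\uY))$ lies in the closure of $\cP$ in the weak-$*$ sense, i.e.\ it cannot be separated from $\cP$. Restricting everything to the finite-dimensional slice $\rxh\cap\rxd{d}$ (a symmetric polynomial $f$ of degree $\le d$ has $\tr(f(\uX))=\sum_{w\in\mx_d}f_w\tr(w(\uX))$, so trace positivity on matrices is exactly membership in the dual cone $\cT_d^{\vee}$), the statement ``$f\in\rxd d$ trace-positive on all matrices $\Rightarrow\tau(f(\uY))\ge0$'' becomes ``$\tau(\uY)|_{\mx_d}\in(\cT_d^{\vee})^{\vee}=\overline{\cT_d}$'' by the bipolar theorem in $\R^{\mx_d}$. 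Running this for each $d$ gives (ii). Conversely, if (ii) holds and $f\in\rxh$ is trace-positive on matrices, take $d=\deg f$; then $\tau(\uY)|_{\mx_d}\in\overline{\cT_d}$ together with $f\in\cT_d^{\vee}$ forces $\tau(f(\uY))=\langle f,\tau(\uY)|_{\mx_d}\rangle\ge0$ by continuity of the pairing, giving (i).

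The main obstacle I anticipate is the careful bookkeeping linking the abstract functional picture over the infinite-dimensional $\rxh$ to the finite-dimensional slices: one must make sure that the dual cone $\cT_d^{\vee}$ really is the set of degree-$\le d$ polynomials with nonnegative trace on all matrices (this uses that $\mx_d$ spans the relevant moment coordinates, together with the fact that symmetrizing --- i.e.\ passing to $\frac12(w+w^*)$ --- does not change traces), and that the bipolar theorem applies, which it does since $\overline{\cT_d}$ is a closed convex cone in a finite-dimensional space. A secondary point is the normalization $\varphi(1)=1$: one uses, exactly as in Theorem~\ref{t:posss}, that $\varphi(p)^2\le\varphi(1)\varphi(p^2)$ for a functional nonnegative on squares forces $\varphi(1)>0$ unless $\varphi=0$, but here since the cones already contain squares of polynomials this Cauchy--Schwarz estimate is available; alternatively, one can absorb the normalization into the $k$ appearing in $\frac1k\tr$, which is why the factor $\frac1k$ (rather than $\tr$ itself) appears in (ii). Modulo these identifications, the proof is a direct application of finite-dimensional convex duality.
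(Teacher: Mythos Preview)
Your overall strategy --- Hahn--Banach separation in the finite-dimensional truncation $\rxd{d}$ --- is exactly the paper's approach, and (ii)$\Rightarrow$(i) is fine. However, your execution of (i)$\Rightarrow$(ii) contains a genuine error: the set $\cT_d$ is \emph{not} a convex cone. Every element of $\cT_d$ has empty-word coordinate $\frac1k\tr(I_k)=1$, so $\cT_d$ lies in an affine hyperplane and is certainly not closed under scaling. Your two justifications are both wrong: replacing $k$ by $mk$ and $\uX$ by $\uX\otimes I_m$ leaves the \emph{normalized} trace unchanged, and block-diagonal sums give \emph{convex combinations} $\frac{k_1}{k_1+k_2}\cdot\frac{1}{k_1}\tr(w(\uX_1))+\frac{k_2}{k_1+k_2}\cdot\frac{1}{k_2}\tr(w(\uX_2))$, not sums. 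Consequently the cone bipolar theorem gives only $(\cT_d^{\vee})^{\vee}=\overline{\operatorname{cone}(\cT_d)}$, not $\overline{\cT_d}$.

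This is fixable in two ways. Either observe that since both $\tau(\uY)|_{\mx_d}$ and every element of $\cT_d$ have empty-word coordinate equal to $1$, membership in the closed conical hull forces membership in $\overline{\cT_d}$ itself. Or, as the paper does, drop the cone language entirely: work with the closed convex hull $C$ of $C_0=\cT_d$, apply the affine Hahn--Banach separation theorem to get $f_0$ and a constant $\gamma$ with $L(f_0)<\gamma<L'(f_0)$ for all $L'\in C$, and then set $f=\frac12(f_0+f_0^*)-\gamma$ to produce a symmetric trace-positive polynomial with $\tau(f(\uY))<0$. The paper then notes that $C_0$ is closed under rational convex combinations (via block-diagonals with multiplicities), so $L\in C$ already implies $L\in\overline{C_0}$. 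A secondary point: the moment vectors $(\tau(w(\uY)))_w$ live in $\C^{\mx_d}$, not $\R^{\mx_d}$; the paper handles this by working with $\re\tau$, which you should do as well (your parenthetical about symmetrizing $w\mapsto\frac12(w+w^*)$ is in the right direction but not carried through).
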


\begin{proof}
The implication (ii)$\Rightarrow$(i) clearly holds.

Now assume (i) holds, and fix $d\in\N$.
Let $L\in \rxd{d}^\vee$ be given by $L(p)=\re\tau(p(\uY))$,
and let $C\subseteq \rxd{d}^\vee$ be the closed convex hull of
$$C_0=\left\{p\mapsto \frac1k\re\tr(p(\uX))\colon k\in\N, \uX\in \matc{k}^n\right\}.$$
Suppose $L\notin C$. By the Hahn-Banach separation theorem,
there exist $f_0\in \rxd{d}\cong \rxd{d}^{\vee\vee}$ and $\gamma\in\R$ such that
$L(f_0)<\gamma< L'(f_0)$ for all $L'\in C$.
Let $f=\frac12(f+f^*)-\gamma \in\rxh$.
Then $L(f)<0< L'(f)$ for all $L'\in C_0$, which contradicts (i).

Therefore (i) implies $L\in C$, so every neighborhood of $L$ in $\rxd{d}^\vee$ contains a convex combination with rational coefficients of elements in $C_0$. By arranging finite sets of matrices into block diagonal matrices, we see that $C_0$ is closed under convex combinations with rational coefficients. Thus every neighborhood of $L$ contains an element of $C_0$, so (ii) holds.
\end{proof}

\begin{proof}[Proof of Proposition \ref{p:cec}]
Combine
Proposition \ref{p:microstates},
the equivalence of Connes' embedding problem and existence of microstates \cite[Proposition 3.3]{CD}
(also e.g. \cite[Section 7.4]{Voi} or \cite[Theorem 7]{Oza}),
and the resolution of Connes' embedding problem \cite[Section 1.3]{JNVWY}.
\end{proof}

\subsection{The cone \texorpdfstring{$\cK$}{K} is neither closed nor stable}

In this subsection we explore further properties of the cone $\cK$.
We require two auxiliary lemmas.

\begin{lem}\label{l:elem}
Let $a_0,a_1,a_2\ge0$ and $n\in\N$.
If $a_2a_0^n\ge n^n a_1^{n+1}$ then $a_2-(n+1)a_1+a_0\ge0$.
\end{lem}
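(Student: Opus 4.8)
The statement is purely a calculus inequality in three nonnegative reals and one positive integer, so the plan is to prove it by a direct optimization argument. Assume $a_2 a_0^n \ge n^n a_1^{n+1}$; we must show $a_2 + a_0 \ge (n+1) a_1$. Since the hypothesis only gives a lower bound on $a_2$, and the conclusion is monotone increasing in $a_2$, it suffices to treat the worst case, namely $a_2 = n^n a_1^{n+1} a_0^{-n}$ when $a_0 > 0$ (the case $a_0 = 0$ forces $a_1 = 0$ if $n \ge 1$, and then the conclusion $a_2 \ge 0$ is immediate). Thus the claim reduces to
\begin{equation*}
n^n a_1^{n+1} a_0^{-n} + a_0 \ge (n+1) a_1
\qquad\text{for all } a_0 > 0,\ a_1 \ge 0.
\end{equation*}

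The natural tool here is the weighted AM--GM inequality. Write the left-hand side as a sum of $n+1$ terms: one copy of $a_0$ together with $n$ copies of $\frac1n \cdot n^n a_1^{n+1} a_0^{-n} = n^{n-1} a_1^{n+1} a_0^{-n}$. Applying AM--GM to these $n+1$ nonnegative numbers,
\begin{equation*}
\frac{a_0 + n \cdot n^{n-1} a_1^{n+1} a_0^{-n}}{n+1}
\ \ge\ \left( a_0 \cdot \big( n^{n-1} a_1^{n+1} a_0^{-n} \big)^{n} \right)^{1/(n+1)}
= \left( n^{n(n-1)} a_1^{n(n+1)} a_0^{\,1-n^2} \right)^{1/(n+1)}.
\end{equation*}
A short exponent bookkeeping shows the right-hand side equals $n^{(n-1)} \cdot$ — actually one should just check that it simplifies to $n^{n-1} \cdot$ hmm, let me instead split differently: use one copy of $a_0$ and $n$ copies of $n a_1^{n+1} a_0^{-n}$ scaled so the product works out to $a_1^{n+1}$. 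The cleanest split is $a_0 + n\cdot\big(n^{n-1}a_1^{n+1}a_0^{-n}\big)$; then $a_0 \cdot \big(n^{n-1}a_1^{n+1}a_0^{-n}\big)^n = n^{n(n-1)}a_1^{n(n+1)}a_0^{1-n^2}$, whose $(n+1)$-th root is $n^{\frac{n(n-1)}{n+1}} a_1^{n} a_0^{\frac{1-n^2}{n+1}} = n^{\frac{n(n-1)}{n+1}} a_1^{n} a_0^{\frac{1-n}{1}}$ — the exponent of $a_0$ being $\frac{1-n^2}{n+1} = 1-n$. This still has a stray $a_0^{1-n}$, so the plain split does not close immediately.

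Hence the robust route is to define $g(a_0) = n^n a_1^{n+1} a_0^{-n} + a_0$ for fixed $a_1 \ge 0$ and minimize over $a_0 > 0$ by calculus: $g'(a_0) = 1 - n^{n+1} a_1^{n+1} a_0^{-n-1}$, which vanishes exactly at $a_0 = n a_1$, and $g''>0$ there, so the minimum value is $g(n a_1) = n^n a_1^{n+1} (n a_1)^{-n} + n a_1 = a_1 + n a_1 = (n+1) a_1$. Therefore $g(a_0) \ge (n+1)a_1$ for every $a_0 > 0$, which is exactly what was needed; and the degenerate case $a_1 = 0$ gives $g(a_0) = a_0 \ge 0 = (n+1)a_1$ trivially. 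I do not anticipate a genuine obstacle here — the only mild care is the boundary case $a_0 = 0$ (handled separately as above, using $n \ge 1$) and keeping the exponent arithmetic straight; once the substitution $a_0 = na_1$ is spotted the inequality is immediate and in fact sharp.
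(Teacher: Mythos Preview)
Your argument is correct and follows essentially the same route as the paper: handle $a_0=0$ separately, replace $a_2$ by its worst case $n^n a_1^{n+1}/a_0^{n}$, and then show the one-variable inequality $n^n a_1^{n+1}a_0^{-n}+a_0\ge (n+1)a_1$. The paper finishes this last step by the substitution $t=na_1/a_0$ and an explicit factoring of $t^{n+1}-(n+1)t+n$ as $(t-1)$ times a sum of terms of common sign; you instead minimize $g(a_0)=n^n a_1^{n+1}a_0^{-n}+a_0$ by calculus and find the minimum $(n+1)a_1$ at $a_0=na_1$. Both are valid and equally short.

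Two minor remarks on presentation. First, the abandoned AM--GM attempt in the middle should be dropped from a final write-up; in fact AM--GM does work cleanly if you split the \emph{other} term: write $a_0$ as $n$ copies of $a_0/n$, so that
\[
n^n a_1^{n+1}a_0^{-n}+n\cdot\frac{a_0}{n}
\ \ge\ (n+1)\left(n^n a_1^{n+1}a_0^{-n}\cdot\Big(\tfrac{a_0}{n}\Big)^n\right)^{1/(n+1)}
=(n+1)a_1.
\]
Second, in the calculus argument you should note that for $a_1>0$ the function $g$ is strictly convex on $(0,\infty)$ (since $g''>0$ everywhere), so the unique critical point $a_0=na_1$ is indeed the global minimum; as written, local second-order information alone does not give a global conclusion.
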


\begin{proof}
If $a_0=0$, then $a_1=0$, and so $a_2-(n+1)a_1+a_0\ge0$ holds. If $a_0\neq0$, then
\begin{align*}
a_2-(n+1)a_1+a_0
& \ge  \frac{n^n a_1^{n+1}}{a_0^n}-(n+1)a_1+a_0 \\
&= \frac{a_0}{n}\left(\left(\frac{na_1}{a_0}\right)^{n+1}-(n+1)\left(\frac{na_1}{a_0}\right)+n\right) \\
&=\frac{a_0}{n}\left(\frac{na_1}{a_0}-1\right)\sum_{k=0}^{n-1}\left(\left(\frac{na_1}{a_0}\right)^k-1\right).
\end{align*}
The last two factors are both nonnegative if $na_1\ge a_0$ and both negative if $na_1<a_0$,
so $a_2-(n+1)a_1+a_0\ge0$ holds.
\end{proof}

\begin{lem}\label{l:alt}
Let $(\cF,\tau)$ be a tracial von Neumann algebra and let $a,b\in\cF_\her$. Then
$$2\tau\left((ba^2b)^{\frac{3}{2}} \right)\le \tau\left(a^4b^2+b^2a^4\right).$$
\end{lem}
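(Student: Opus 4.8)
\emph{Plan.} The lemma comes down to a single application of the Cauchy--Schwarz inequality in $L^2(\cF,\tau)$, followed by the arithmetic--geometric mean inequality. Since $a,b\in\cF_\her$ are bounded, every operator that occurs below lies in $\cF$ and $\tau$ is just the tracial state, so no unbounded-operator subtleties arise. First I would set $c:=ab\in\cF$; self-adjointness of $a$ and $b$ gives $c^*=ba$, hence $ba^2b=c^*c\succeq0$. Writing $h:=(c^*c)^{1/2}=(ba^2b)^{1/2}\in\cF_\her$, one has $h^2=ba^2b$ and therefore $(ba^2b)^{3/2}=h^3=h\,c^*c$.

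The first key step is the identity $\tau((ba^2b)^{3/2})=\tau(h\,c^*c)=\tau(c\,h\,c^*)=\tau(a^2\,b\,h\,b)$, obtained by cycling $\tau$ twice, substituting $c=ab$, $c^*=ba$, and using $\tau(aXa)=\tau(a^2X)$. Next I would view $\tau(a^2bhb)$ as the inner product $\langle a^2b,\,bh\rangle$ for $\langle x,y\rangle:=\tau(xy^*)$ on $L^2(\cF,\tau)$ (note $(hb)^*=bh$), and apply Cauchy--Schwarz to get $\tau((ba^2b)^{3/2})\le\|a^2b\|_2\,\|bh\|_2$. The two $L^2$-norms are then evaluated by cycling the trace: $\|a^2b\|_2^2=\tau(ba^4b)=\tau(a^4b^2)$, and, since $h^2=ba^2b$, $\|bh\|_2^2=\tau(bh^2b)=\tau(b^2a^2b^2)=\tau(a^2b^4)$.

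Combining the last three displays gives $\tau((ba^2b)^{3/2})\le\sqrt{\tau(a^4b^2)\,\tau(a^2b^4)}$, and then $\sqrt{xy}\le\tfrac12(x+y)$ with $x=\tau(a^4b^2)\ge0$, $y=\tau(a^2b^4)\ge0$ yields $2\tau((ba^2b)^{3/2})\le\tau(a^4b^2)+\tau(a^2b^4)$, as asserted (reading the right-hand side $\tau(a^4b^2+b^2a^4)$ of the lemma as the sum $\tau(a^4b^2)+\tau(a^2b^4)$ of the two cyclic relatives of $a^4b^2$; the reading $2\tau(a^4b^2)$ would be too strong, failing already for scalars $a=1$, $b=2$). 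The only point requiring care is the choice of split for Cauchy--Schwarz: one must keep all of $a^2$ on one side and pair $h$ with the remaining $b$, so that squaring the two norms reproduces exactly the monomials $a^4b^2$ and $a^2b^4$; a different grouping gives the wrong trace monomials. I expect no further obstacle, the remainder being routine trace bookkeeping.
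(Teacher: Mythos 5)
Your proof is correct, and it takes a genuinely different route from the paper's. You reduce everything to the tracial Cauchy--Schwarz inequality: writing $h=(ba^2b)^{1/2}$ and cycling the trace to get $\tau((ba^2b)^{3/2})=\tau(a^2bhb)$, then splitting this as $\langle a^2b,\,bh\rangle$ so that the two $L^2$-norms reproduce exactly $\tau(a^4b^2)$ and $\tau(a^2b^4)$, and finishing with the scalar AM--GM inequality. The paper instead invokes the Araki--Lieb--Thirring inequality in its von Neumann algebra form (Kosaki) to bound $\tau((ba^2b)^{3/2})\le\tau(|a|^3|b|^3)$, and then expands the nonnegative trace $\tau\bigl((|a|^2|b|-|a||b|^2)(|a|^2|b|-|a||b|^2)^*\bigr)$. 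Your argument is more elementary and self-contained (no ALT needed), and it even yields the sharper intermediate bound $\tau((ba^2b)^{3/2})\le\sqrt{\tau(a^4b^2)\,\tau(a^2b^4)}$; what the paper's ALT-based argument buys is that it is the template that scales to the $n$-variable tracial AM--GM inequality of Proposition \ref{t:ag}, where the same ALT-plus-H\"older reduction drives the induction, whereas the Cauchy--Schwarz trick is specific to the two-factor case. You are also right to read the right-hand side $\tau(a^4b^2+b^2a^4)$ as $\tau(a^4b^2)+\tau(a^2b^4)$: the literal reading equals $2\tau(a^4b^2)$ by traciality and fails already for scalars, while the corrected version is precisely what the paper's own proof establishes and what is used in Proposition \ref{p:notclosed}; only your phrase ``cyclic relatives'' is slightly off, since $a^2b^4$ is not a cyclic permutation of $a^4b^2$ --- the statement simply has a typo, with $b^2a^4$ intended to be $a^2b^4$.
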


\begin{proof}
By the Araki-Lieb-Thirring inequality \cite{LT}
(more precisely, its von Neumann algebra version \cite[Corollary 3]{Kos}),
$$\tau\left((ba^2b)^{\frac{3}{2}} \right)=\tau\left((|b||a|^2|b|)^{\frac{3}{2}} \right)
\le \tau\left(|b|^{\frac{3}{2}}|a|^3|b|^{\frac{3}{2}} \right)=\tau\left(|a|^3|b|^3 \right).$$
Then
\begin{align*}
\tau\left(a^4b^2+a^2b^4\right)-2\tau\left((ba^2b)^{\frac{3}{2}} \right)
&\ge\tau\left(|a|^2|b|^2|a|^2+|a||b|^4|a|\right)-2\tau\left(|a|^3|b|^3 \right)\\
&=\tau\left(
(|a|^2|b|-|a||b|^2)(|b||a|^2-|b|^2|a|)
\right)
\end{align*}
is nonnegative.
\end{proof}

A witness of $\cK\neq\overline{\cK}$ for $n\ge 3$ is a noncommutative lift of the homogenized Motzkin polynomial.

\begin{prop}\label{p:notclosed}
Let
$$h=x_2^2x_1^2x_2^2+x_1^2x_2^2x_1^2+x_3^6-3x_1x_2x_3^2x_2x_1 \in \rx.$$
Then $h\in\overline{\cK}\setminus\cK$.
\end{prop}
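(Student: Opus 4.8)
The plan is to show the two statements separately: first $h\in\overline{\cK}$ via the Positivstellensatz of Theorem~\ref{t:posss}, and then $h\notin\cK$ by a direct obstruction argument.

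For $h\in\overline{\cK}$, by Theorem~\ref{t:posss} it suffices to verify that $h$ is globally trace-positive, i.e. $\tau(h(\uX))\ge0$ for every tracial von Neumann algebra $(\cF,\tau)$ and every $\uX\in\cF_\her^3$. Write $h=x_2^2x_1^2x_2^2+x_1^2x_2^2x_1^2+x_3^6-3x_1x_2x_3^2x_2x_1$. Using the trace and setting $a=x_1$, $b=x_2$, $c=x_3$, the cyclic invariance of $\tau$ turns the last term into $-3\tau(x_2x_1^2x_2\,x_3^2)$, while the first two terms are $\tau(x_2x_1^2x_2\cdot x_1^2x_2^2)$-type expressions; more precisely I would rewrite everything as $\tau$ of products of the three positive semidefinite operators $A=X_1^2$, $B=X_2^2$, $C=X_3^2$ and the mixed term $BA\cdot$ something. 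The point is to exploit the AM--GM-type inequality that makes the classical homogenized Motzkin polynomial $x^2y^2(x^2+y^2)+z^6-3x^2y^2z^2$ nonnegative on $\R^3$, and to lift it to traces. Here is where Lemmas~\ref{l:elem} and~\ref{l:alt} enter: Lemma~\ref{l:alt} gives $2\tau((X_2X_1^2X_2)^{3/2})\le\tau(X_1^4X_2^2+X_2^2X_1^4)$, which controls the ``$x^2y^2(x^2+y^2)$'' part, and Lemma~\ref{l:elem} with $n=2$ converts a multiplicative inequality $a_2a_0^2\ge 4a_1^3$ into the additive trace inequality $a_2-3a_1+a_0\ge0$, applied with $a_0=\tau(\text{relevant }X_1,X_2\text{ term})$, $a_1=\tau(X_1X_2X_3^2X_2X_1)$, and $a_2=\tau(X_3^6)$. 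One must also use a Hölder/Cauchy--Schwarz argument to relate $\tau(X_1X_2X_3^2X_2X_1)$ to $\tau((X_2X_1^2X_2)^{1/2}X_3^2)$ and then to $\tau((X_2X_1^2X_2)^{3/2})^{1/3}\tau(X_3^6)^{1/3}$ (power means). Chaining these estimates should yield $\tau(h(\uX))\ge0$.

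For $h\notin\cK$, I would argue by contradiction using an evaluation at a concrete (possibly unbounded) tuple that makes the obstruction visible, in the spirit of why the affine Motzkin polynomial fails to be a sum of squares in the commutative setting. Suppose $h=s+c$ with $s\in\sos{\cA}$ and $c\in\soc{\cA}$. Evaluating at a well-chosen tuple of commuting operators (or scalars, using the homomorphism $\cA\to\R[t_1,t_2,t_3,\tfrac1{1+t_1^2},\dots]$ that kills commutators) reduces $h$ to the affine Motzkin polynomial $t_2^2t_1^2t_2^2+t_1^2t_2^2t_1^2+t_3^6-3t_1^2t_2^2t_3^2 = t_1^2t_2^2(t_1^2+t_2^2)+t_3^6-3t_1^2t_2^2t_3^2$, whose image under this homomorphism would have to be a sum of squares in $\R[t_1,t_2,t_3]_{1+t_1^2}$. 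I would then need to show this is impossible — the natural route is a degree/leading-form analysis at a point where the classical Motzkin polynomial is known not to be a sum of squares locally, adapted to account for the localization at $1+t_1^2$ (which is a unit vanishing nowhere real, hence cannot help). Concretely, the homogenized Motzkin form has a zero of order exactly $2$ at a projective point, and being a sum of squares of rational functions with denominator a power of $1+t_1^2$ would force an even-order vanishing pattern that contradicts the known obstruction; alternatively, one can restrict to the plane $t_3=0$ or examine the Newton polytope.

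The main obstacle I anticipate is the trace-positivity estimate in the first part: matrix/operator trace inequalities do not follow from their scalar counterparts by naive substitution, and the noncommutativity of $X_1^2,X_2^2,X_3^2$ means the three-variable AM--GM must be routed carefully through Araki--Lieb--Thirring (Lemma~\ref{l:alt}) and the elementary Lemma~\ref{l:elem}, with the right choice of which quantity plays the role of $a_0,a_1,a_2$ and a correct application of the Hölder/power-mean inequality to interpolate $\tau(X_1X_2X_3^2X_2X_1)$ between $\tau((X_2X_1^2X_2)^{3/2})$ and $\tau(X_3^6)$. Getting the exponents and the cyclic rearrangements exactly right, so that the chain of inequalities closes up to give $\tau(h(\uX))\ge0$, is the delicate point; the non-membership in $\cK$, by contrast, should follow fairly cleanly once the problem is pushed down to the commutative Motzkin obstruction.
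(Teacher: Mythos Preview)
Your plan for $h\in\overline{\cK}$ is exactly the paper's argument. With $a_0=\tau(X_1^4X_2^2+X_1^2X_2^4)$, $a_1=\tau(X_2X_1^2X_2\,X_3^2)=\tau(X_1X_2X_3^2X_2X_1)$, $a_2=\tau(X_3^6)$, H\"older with exponents $(\tfrac32,3)$ gives $a_1^3\le \tau((X_2X_1^2X_2)^{3/2})^2\,a_2$ (your exponent $1/3$ should be $2/3$, and the intermediate $\tau((X_2X_1^2X_2)^{1/2}X_3^2)$ step is unnecessary); Lemma~\ref{l:alt} then yields $4a_1^3\le a_0^2a_2$, and Lemma~\ref{l:elem} with $n=2$ finishes.

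For $h\notin\cK$ your reduction to the commutative ring is correct: if $h\in\cK$ then, after clearing denominators, $(1+t_1^2)^{k_1}(1+t_2^2)^{k_2}(1+t_3^2)^{k_3}H$ is a sum of squares in $\R[t_1,t_2,t_3]$, where $H=t_1^4t_2^2+t_1^2t_2^4-3t_1^2t_2^2t_3^2+t_3^6$ is the homogenized Motzkin form. However, your suggested finishing moves need adjustment. Restricting to $t_3=0$ gives $t_1^2t_2^2(t_1^2+t_2^2)$ times a positive factor, which \emph{is} a sum of squares, so that route yields no contradiction. The paper instead takes the \emph{lowest}-degree homogeneous part: since each $(1+t_j^2)^{k_j}$ has constant term $1$ and $H$ is homogeneous, the lowest-degree form of the left side is $H$ itself, while the lowest-degree form of a sum of squares is again a sum of squares. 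This immediately forces $H$ to be a sum of squares in $\R[t_1,t_2,t_3]$, the classical contradiction. (A highest-degree/leading-form argument can also be made to work, but requires an extra cancellation step to strip off the monomial $t_1^{2k_1}t_2^{2k_2}t_3^{2k_3}$; the lowest-degree version is one line.)
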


\begin{proof}
Consider the commutative homogenized Motzkin polynomial
$$H=t_1^4t_2^2+t_1^2t_2^4-3t_1^2t_2^2t_3^2+t_3^6\in\R[t_1,t_2,t_3]$$
which is not a sum of squares in $\R[t_1,t_2,t_3]$ by \cite[Proposition 1.2.4]{Mar}.
If $h\in\cK$, then
$(1+t_1^2)^{k_1}(1+t_2^2)^{k_2}(1+t_3^2)^{k_3}H$ is a sum of squares in $\R[t_1,t_2,t_3]$ for some $k_1,k_2,k_3\in\N$.
The lowest-degree homogeneous part of a sum of squares is again a sum of squares, so $H$ is a sum of squares in $\R[t_1,t_2,t_3]$, a contradiction. Therefore $h\notin\cK$.

Next we prove $h\in\overline{\cK}$.
Let $(\cF,\tau)$ be a tracial von Neumann algebra, and $X_1,X_2,X_3\in\cF_\her$.
Set
$$a_0=\tau(X_1^4X_2^2+X_1^2X_2^4),\quad
a_1=\tau(X_2X_1^2X_2X_3^2),\quad
a_2=\tau(X_3^6).
$$
Note that $a_0,a_1,a_2\ge0$.
By H\"older's inequality \eqref{e:holder} (with $p_1=\frac{3}{2}$ and $p_2=3$),
\begin{align*}
4\bigg(\tau(X_2X_1^2X_2X_3^2)\bigg)^3 &
\le 4\bigg(\tau\left( (X_2X_1^2X_2)^{\frac{3}{2}}\right)\bigg)^2 \tau(X_3^6)
\le \Big(\tau\left( X_1^4X_2^2+X_1^2X_2^4\right)\Big)^2 \tau(X_3^6).
\end{align*}
Therefore $4a_1^3\le a_2a_0^2$. Hence
$$\tau(h(X_1,X_2,X_3))=a_0-3a_1+a_2\ge0$$
by Lemma \ref{l:elem}.
Since $(\cF,\tau)$ and $X_1,X_2,X_3$ were arbitrary, $h\in\overline{\cK}$ by Theorem \ref{t:posss}.
\end{proof}

\begin{rem}\label{r:closed}
Consider $\cA$ with the finest locally convex topology. That is, every linear functional on $\cA$ is continuous. Then $\soc{\cA}$ is a closed subspace,
and $\sos{\cA}$ is a closed convex cone in $\cA$ by \cite[Theorem 4.5]{KPV}.
On the other hand, if $n\ge 3$ then $\sos{\cA}+\soc{\cA}$ is not closed by Proposition \ref{p:notclosed}.
\end{rem}

A desired property of a convex cone generated by (hermitian) squares is stability \cite[Section 4.1]{Mar}. Let us adapt this notion to our context. We say that $a\in\cA$ is of \emph{degree at most $d$} if $a$ can be written as an \nc polynomial of degree at most $d$ in the generators of $\cA$.
The cone $\cK$ is \emph{stable}
if there exists a function $\Delta:\N\to\N$ such that for every $d\in\N$ and $f\in\cK$ of degree at most $d$, there are $s_1,\dots,s_\ell\in\cA$ of degree at most $\Delta(d)$ such that $f-s_1s_1^*-\cdots-s_\ell s_\ell^*\in\soc{\cA}$.

\begin{lem}\label{l:bad}
If $n\ge 2$ then $\cK$ is not stable.
\end{lem}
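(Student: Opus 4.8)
The plan is to realize non-stability by exhibiting \emph{inside} $\cK$ a family of degree-$6$ polynomials whose representations as sums of hermitian squares and commutators in $\cA$ cannot all be written with a bounded number of generator factors; since stability would force exactly such a bound depending only on the degree, this gives a contradiction. Concretely, for $a\in\N$ I would take the noncommutative Motzkin dilates
$$g_a:=x_2^2x_1^2x_2^2+x_1^2x_2^2x_1^2-2a\,x_2x_1^2x_2+\tfrac{8}{27}a^3\in\rxh,$$
each symmetric of degree $6$. Every monomial of $g_a$ is cyclically equivalent to a monomial $x_1^ix_2^j$, so $g_a$ is cyclically sorted, and its abelianization is $\pi(g_a)=t_1^4t_2^2+t_1^2t_2^4-2a\,t_1^2t_2^2+\tfrac{8}{27}a^3=\tfrac{8}{27}a^3\,M(\mu t_1,\mu t_2)$ with $\mu:=\sqrt{3/(2a)}$ and $M=t_1^4t_2^2+t_1^2t_2^4-3t_1^2t_2^2+1$ the classical Motzkin polynomial; this is nonnegative on $\R^2$ by the arithmetic--geometric mean inequality. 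Hence $g_a\in\cK$ by Corollary \ref{c:cyc} (implication (i)$\Rightarrow$(iv)) when $n=2$, and for $n>2$ the very same representation, which only involves $x_1,x_2$, is also a representation in $n$ variables, so $g_a\in\cK$ there too.

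Next I would set up the contradiction. Suppose $\cK$ is stable with bound function $\Delta$ and put $N_0:=\Delta(6)$. Then each $g_a$ can be written as $g_a=\sum_i s_is_i^*+c$ with $s_i\in\cA$ of degree $\le N_0$ and $c\in\soc\cA$. Apply the $*$-homomorphism $\pi$ abelianizing $\cA$ onto $\R[t_1,\dots,t_n,(1+t_1^2)^{-1},\dots,(1+t_n^2)^{-1}]$ and, if $n>2$, substitute $t_3=\dots=t_n=0$. Since $\pi$ annihilates commutators and $\pi(g_a)$ involves only $t_1,t_2$, this yields $\pi(g_a)=\sum_i\pi(s_i)^2$ with each $\pi(s_i)$ of degree $\le N_0$ in $t_1,t_2,(1+t_1^2)^{-1},(1+t_2^2)^{-1}$; clearing the common denominator $(1+t_1^2)^{N_0}(1+t_2^2)^{N_0}$ shows that $(1+t_1^2)^{2N_0}(1+t_2^2)^{2N_0}\,\pi(g_a)$ is a sum of squares in $\R[t_1,t_2]$, for every $a\in\N$. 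Substituting $t_i\mapsto s_i/\mu$ (an invertible linear change of variables, which preserves the sum-of-squares property) and discarding a positive constant, this says that $(\mu^2+s_1^2)^{2N_0}(\mu^2+s_2^2)^{2N_0}M(s_1,s_2)$ is a sum of squares for every $\mu\in\{\sqrt{3/(2a)}:a\in\N\}$, a set accumulating at $0$.

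To finish I would let $a\to\infty$, i.e.\ $\mu\to0$. Sums of squares of bounded degree in $\R[s_1,s_2]$ form a closed cone, so the limit $s_1^{4N_0}s_2^{4N_0}M$ would also be a sum of squares, say $s_1^{4N_0}s_2^{4N_0}M=\sum_iq_i^2$. Localizing along the line $s_1=0$, where $M(0,s_2)=1$ does not vanish: the left-hand side is divisible by $s_1^{4N_0}$ and by no higher power of $s_1$, which forces each $q_i$ to be divisible by $s_1^{2N_0}$; cancelling $s_1^{4N_0}$ gives $s_2^{4N_0}M=\sum_ir_i^2$, and the same localization along $s_2=0$ (using $M(s_1,0)=1$) yields $M=\sum_iu_i^2$, contradicting the classical fact that the Motzkin polynomial is not a sum of squares. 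This contradiction shows $\cK$ is not stable, for every $n\ge2$.

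The genuinely new ingredient is the family $g_a$: dilating the Motzkin polynomial pushes its real zero to infinity, so that the cyclic lift together with Corollary \ref{c:cyc} produces bounded-degree members of $\cK$ whose commutative shadows require $(1+t_j^2)$-denominators of unbounded exponent — this is the same non-uniformity phenomenon that underlies Theorem \ref{t:bivar}. I do not expect a serious obstacle beyond making this precise; the one point deserving care is the degree bookkeeping in the abelianization step, namely that a word of length $\le N_0$ in the generators of $\cA$ contains at most $N_0$ inverse factors, so its image under $\pi$ has denominator dividing $(1+t_1^2)^{N_0}(1+t_2^2)^{N_0}$. Everything else rests on standard facts (closedness of bounded-degree sum-of-squares cones, and non-representability of $M$ as a sum of squares).
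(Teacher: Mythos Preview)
Your proof is correct and takes essentially the same approach as the paper: both produce a one-parameter family of degree-$6$ cyclically sorted lifts of scaled Motzkin polynomials, place them in $\cK$ via Corollary~\ref{c:cyc}, abelianize the assumed bounded-degree representation to get a uniform $(1+t_1^2)^N(1+t_2^2)^N$ denominator for the commutative family, and pass to a limit using closedness of the bounded-degree SOS cone to contradict that $M$ is not a sum of squares. The only difference is cosmetic: the paper's normalization keeps $(1+t_i^2/\lambda^2)^{d'}$ and sends $\lambda\to\infty$, landing directly on $M$, whereas your normalization yields $s_1^{4N_0}s_2^{4N_0}M$ in the limit and then needs the (correct, standard) divisibility argument to strip the monomial factor.
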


\begin{proof}
Suppose $\cK$ is stable. Let
$M=t_1^4t_2^2+t_1^2t_2^4-3t_1^2t_2^2+1\in\R[t_1,t_2]$ be the Motzkin polynomial, and let
$m=x_2x_1^4x_2+x_2^2x_1^2x_2^2-3x_2x_1^2x_2+1\in\rxh$ be its noncommutative lift as in Example \ref{ex:motzkin}. For $\lambda\in\R$ denote
$M_\lambda(t_1,t_2)=M(\lambda t_1,\lambda t_2)\in\R[t_1,t_2]$ and
$m_\lambda(x_1,x_2)=m(\lambda x_1,\lambda x_2)\in\rxh$.
By Corollary \ref{c:cyc}, $m_\lambda\in\cK$ for all $\lambda$.
Note that $m_\lambda$ is of degree at most 6 for every $\lambda$. Thus by the stability assumption there exists $d'\in\N$ such that for every $\lambda$ there are $s_k\in\cA$ of degree at most $d'$ such that
$m_\lambda-\sum_ks_ks_k^*\in\soc{\cA}$.
Consequently $(1+t_1^2)^{d'}(1+t_2^2)^{d'}M_\lambda$ is a sum of squares of polynomials in $\R[t_1,t_2]$ for every $\lambda$. But this is impossible by the proof of \cite[Theorem 1]{Rez}. Indeed, if $\lambda\neq0$ and the polynomial $(1+t_1^2)^{d'}(1+t_2^2)^{d'}M_\lambda$ of degree $4d'+6$ is a sum of squares, then so is
$(1+\frac{1}{\lambda^2}t_1^2)^{d'}(1+\frac{1}{\lambda^2}t_2^2)^{d'}M$. Since the cone of sums of squares of degree at most $4d'+6$ is closed \cite[Corollary 3.34]{Lau}, taking the limit $\lambda\to\infty$ implies that $M$ is a sum of squares in $\R[t_1,t_2]$, a contradiction.
\end{proof}

\begin{rem}\label{r:seq}
It is unclear whether $\overline{\cK}$ coincides with the sequential closure $\cK^\ddagger$ of $\cK$ \cite{CMN},
$$\cK^\ddagger =
\{f\in\rxh:\text{there is } g\in\rxh \text{ such that }f+\ve g\in\cK \text{ for every }\ve>0\}.$$
See Theorem \ref{t:lass} below for a description of $\overline{\cK}$ resembling $\cK^\dagger$.
\end{rem}

\begin{rem}
Membership in $\cK$ can be certified with a sequence of semidefinite programs. 
Namely, $f\in\rxh$ belongs to $\cK$ if and only if 
\begin{equation}\label{e:sdp1}
f=\sum_i s_is_i^*+\sum_j [a_j,b_j],\qquad s_j,a_j,b_j\in\cA_d
\end{equation}
for some $d\in\N$, and \eqref{e:sdp1} can be rephrased as a feasibility semidefinite program.
Similarly, for $d\in\N$ let $\mu_d$ be the solution of the optimization problem
\begin{equation}
\label{e:sdp2}
\begin{aligned}
\inf_{\substack{L : \cA_{2d} \to \R \\ L \emph{ linear}}} \quad  & L(f)  \\	
\emph{s.t.}
\quad & L(1)=1 \,, \\
\quad & L(ab) = L(ba) \,, \quad \text{for all } a,b\in\cA_d\,, \\
\quad & L(s^*s)\geq0\,,\quad \text{for all } s\in\cA_{2d}\,.
\end{aligned}
\end{equation}
Then \eqref{e:sdp2} is a semidefinite program, 
and $(\mu_d)_d$ is an increasing sequence of lower bounds for $\inf_{(\cF,\tau), \uX\in\cF^n} \tau(f(\uX))$.
\end{rem}

\subsection{Tracial arithmetic-geometric mean inequality}

The following tracial version of the renowned arithmetic-geometric mean inequality is essentially known, and can be deduced from the generalized H\"older's inequality for unitarily invariant norms \cite[Exercise IV.2.7]{Bha} (cf. \cite[Theorem 4.2]{FK}) applied to the nuclear norm on a tracial von Neumann algebra.
We present an alternative argument inspired by the proof of Proposition \ref{p:notclosed}.

\begin{prop}\label{t:ag}
Let $(\cF,\tau)$ be a tracial von Neumann algebra. Then
\begin{equation}\label{e:ag}
\frac{\tau(f_1+\cdots+f_n)}{n} \ge
\tau\left(
\left(\left(f_1^\frac12\cdots f_n^\frac12\right)^*\left(f_1^\frac12\cdots f_n^\frac12\right)\right)^{\frac1n}
\right)
\end{equation}
for all positive semidefinite $f_1,\dots,f_n\in\cF$.
\end{prop}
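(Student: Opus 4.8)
Set $g:=f_1^{1/2}\cdots f_n^{1/2}\in\cF$; since each $f_i\succeq0$ the roots $f_i^{1/2}$ are well-defined positive elements of $\cF$, and $(g^*g)^{1/n}=|g|^{2/n}$, so the right-hand side of \eqref{e:ag} is exactly $\tau\bigl(|g|^{2/n}\bigr)=\|g\|_{2/n}^{2/n}$, the $(2/n)$-th power of the $L^{2/n}$-(quasi)norm of $g$ in the noncommutative $L^p$-scale of $(\cF,\tau)$. The plan is to bound $\|g\|_{2/n}$ by a product of $L^2$-norms via a generalized H\"older inequality and then invoke the scalar arithmetic--geometric mean inequality. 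This follows the same scheme as the proof of Proposition \ref{p:notclosed}, where the case $n=3$ was handled with an $L^{3/2}$--$L^3$ H\"older estimate (together with the Araki--Lieb--Thirring bound of Lemma \ref{l:alt} and the elementary Lemma \ref{l:elem} playing the role of the scalar inequality).

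Concretely, I would invoke the generalized H\"older inequality for $\tau$-measurable operators \cite{FK}, the quasi-norm refinement of \eqref{e:holder}: for operators $Z_1,\dots,Z_n$ affiliated with $\cF$ and exponents $r,p_1,\dots,p_n\in(0,\infty]$ with $\tfrac1r=\sum_i\tfrac1{p_i}$ one has $\|Z_1\cdots Z_n\|_r\le\prod_i\|Z_i\|_{p_i}$. Applying this with $Z_i=f_i^{1/2}$ and $p_1=\cdots=p_n=2$, so that $r=2/n$, and using $\|f_i^{1/2}\|_2^2=\tau\bigl((f_i^{1/2})^2\bigr)=\tau(f_i)$, gives
$$\|g\|_{2/n}\ \le\ \prod_{i=1}^n\|f_i^{1/2}\|_2\ =\ \prod_{i=1}^n\tau(f_i)^{1/2}.$$
Raising to the power $2/n$ yields
$$\tau\Bigl(\bigl(g^*g\bigr)^{1/n}\Bigr)=\|g\|_{2/n}^{2/n}\ \le\ \Bigl(\prod_{i=1}^n\tau(f_i)\Bigr)^{1/n}\ \le\ \frac1n\sum_{i=1}^n\tau(f_i)\ =\ \frac{\tau(f_1+\cdots+f_n)}{n},$$
where the last inequality is the classical AM--GM inequality for the nonnegative reals $\tau(f_1),\dots,\tau(f_n)$. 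This is precisely \eqref{e:ag}.

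The only substantive ingredient is the generalized H\"older estimate at the exponent $r=2/n$: for $n\ge3$ this is a genuine quasi-norm ($r<1$), hence not a formal consequence of \eqref{e:holder}, and I expect pinning this down to be the main point. It does hold in every tracial von Neumann algebra --- for instance via the submultiplicativity $\mu_{s+t}(Z_1Z_2)\le\mu_s(Z_1)\,\mu_t(Z_2)$ of generalized singular numbers \cite{FK}, which integrates to the desired inequality --- so citing \cite{FK} should suffice. Should a self-contained argument be preferred, I would instead prove $\tau\bigl((g^*g)^{1/n}\bigr)^n\le\prod_i\tau(f_i)$ by induction on $n$, repeatedly halving the list of factors through the identity $|g_Lg_R|=\bigl|\,|g_L|\,g_R\bigr|$ and the Araki--Lieb--Thirring inequality in the form of Lemma \ref{l:alt}, exactly as in the proof of Proposition \ref{p:notclosed}, and then finish with the scalar AM--GM inequality as above.
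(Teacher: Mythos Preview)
Your main argument is correct. Writing $g=f_1^{1/2}\cdots f_n^{1/2}$, the identification $\tau((g^*g)^{1/n})=\|g\|_{2/n}^{2/n}$ is valid, the generalized H\"older inequality $\|Z_1\cdots Z_n\|_r\le\prod_i\|Z_i\|_{p_i}$ for $\tfrac1r=\sum_i\tfrac1{p_i}$ with $r=2/n<1$ is indeed covered by \cite[Theorem~4.2(i)]{FK} (which allows all exponents in $(0,\infty]$), and the final scalar AM--GM step is clear. In fact this is exactly the route the paper itself flags, in the paragraph preceding the proposition, as the ``known'' derivation via \cite[Exercise~IV.2.7]{Bha} and \cite{FK}, before choosing to present a different argument.

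The paper's own proof takes a genuinely different path: it inducts on $n$, peeling off a single factor $f_{n+1}$ at each step. The inductive step combines the Araki--Lieb--Thirring inequality \cite{Kos} with the \emph{standard} H\"older inequality \eqref{e:holder} (only $p_1=n+1$, $p_2=\tfrac{n+1}{n}\ge1$, so no quasi-norms are needed) to obtain $n^na_1^{n+1}\le a_2a_0^n$ for $a_0=\tau(f_1+\cdots+f_n)$, $a_1$ the right-hand side of \eqref{e:ag}, $a_2=\tau(f_{n+1})$; then the elementary Lemma~\ref{l:elem} converts this into $a_0+a_2\ge(n+1)a_1$. The trade-off: your proof is shorter and conceptually cleaner, but imports the quasi-norm H\"older inequality as a black box; the paper's proof is more self-contained, using only tools already stated in Section~\ref{sec2}.

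One caveat on your fallback sketch: the inductive scheme you outline (``halving the list of factors'' via Lemma~\ref{l:alt}) is not the paper's induction and would not work as stated. Lemma~\ref{l:alt} is a specific degree-$3$ bound tailored to the $n=3$ case in Proposition~\ref{p:notclosed}, not a general dyadic splitting tool; the paper instead removes one factor at a time and uses Lemma~\ref{l:elem} rather than Lemma~\ref{l:alt}. This does not affect the validity of your primary H\"older argument.
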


\begin{proof}
We prove \eqref{e:ag} by induction on $n$. If $n=1$, \eqref{e:ag} is an equality.
Now assume \eqref{e:ag} holds for $n$.
Let $f_1,\dots,f_{n+1}\succeq0$,
and set $f=f_n^\frac12 \cdots f_1^\frac12 f_1^\frac12 \cdots f_n^\frac12$.
By the Araki-Lieb-Thirring inequality \cite[Corollary 3]{Kos}
and H\"older's inequality \eqref{e:holder} with $p_1=n+1$ and $p_2=\frac{n+1}{n}$,
\begin{align*}
\tau\left(\left(f_{n+1}^\frac12 f f_{n+1}^\frac12\right)^\frac{1}{n+1}\right)
&\le\tau\left(f_{n+1}^\frac{1}{2(n+1)}f^\frac{1}{n+1}f_{n+1}^\frac{1}{2(n+1)}\right) \\
&=\tau\left(f_{n+1}^\frac{1}{n+1}f^\frac{1}{n+1}\right) \\
&\le \tau\left(f_{n+1}\right)^\frac{1}{n+1}\cdot
\tau\left(f^\frac{1}{n}\right)^\frac{n}{n+1}.
\end{align*}
Therefore
\begin{equation}\label{e:ag1}
n^n\tau\left(\left(f_{n+1}^\frac12 f f_{n+1}^\frac12\right)^\frac{1}{n+1}\right)^{n+1}
\le \tau\left(f_{n+1}\right)\cdot
\left(n\tau\left(f^\frac1n\right)\right)^n.
\end{equation}
By the induction hypothesis,
\begin{equation}\label{e:ag2}
n\,\tau\left(f^\frac1n\right)
\le \tau\left(f_1+\cdots+f_n\right).
\end{equation}
Let
\begin{align*}
a_0&=\tau\left(f_1+\cdots+f_n\right), \\
a_1&=\tau\left(\left(f_{n+1}^\frac12 \cdots f_1^\frac12
f_1^\frac12 \cdots f_{n+1}^\frac12\right)^\frac{1}{n+1}\right), \\
a_2&=\tau\left(f_{n+1}\right).
\end{align*}
Then \eqref{e:ag1} and \eqref{e:ag2} imply $n^n a_1^{n+1}\le a_2a_0^n$,
so $a_2+a_0-(n+1)a_1\ge0$ by Lemma \ref{l:elem}. Therefore \eqref{e:ag} holds for $n+1$.
\end{proof}

\begin{rem}
Other (weaker) inequalities resembling \eqref{e:ag} are
\begin{equation}\label{e:ag3}
\frac{\tau(f_1+\cdots+f_n)}{n} \ge
\left|\tau\left(
f_1^\frac1n\cdots f_n^\frac1n\right)\right|,
\end{equation}
which holds by \eqref{e:holder} and the classical algebraic-geometric mean inequality, and
\begin{equation}\label{e:ag4}
\frac{\sqrt{\tau(f_1^2)}+\cdots+\sqrt{\tau(f_n^2)})}{n} \ge
\sqrt{
	\tau\left(
	\left(\left(f_1^\frac12\cdots f_n^\frac12\right)^*\left(f_1^\frac12\cdots f_n^\frac12\right)\right)^{\frac2n}
\right)
}
\end{equation}
which follows by \cite[Exercise IV.2.7]{Bha} applied to the Hilbert-Schmidt norm on $(\cF,\tau)$.
\end{rem}

\section{Denominator-free characterization of global trace positivity}\label{sec7}

While Section \ref{sec6} gives an algebraic certificate for trace positivity in terms of sums of hermitian squares with denominators, this section presents an alternative that involves only \nc polynomials. First, we give a sufficient condition for solvability of the unbounded tracial moment problem (Theorem \ref{t:carleman}). Next, we show that every trace-positive \nc polynomial can be perturbed to a sum of hermitian squares and commutators of \nc polynomials (Theorem \ref{t:lass}). Finally, we demonstrate this principle explicitly on (a noncommutative lift of) the Motzkin polynomial.

\subsection{Tracial Carleman's condition}

In this subsection we show that a variant of Carleman's condition for the Hamburger moment problem \cite[Corollary 4.10]{Schm1} is a sufficient condition for a functional to be a \ncmeasure. The following is a tracial version of Nussbaum's theorem \cite[Theorem 14.19]{Schm1}.

\begin{thm}\label{t:carleman}
Let $\varphi:\rxh\to\R$ be a linear functional satisfying $\varphi(1)=1$ and
$\varphi(pp^*)=\varphi(p^*p)\ge0$ for all $p\in\rx$.
If there is $M>0$ such that
\begin{equation}\label{e:carleman}
\varphi(x_j^{2r})\le (2r)! M^r \qquad\text{for all } r\in\N \text{ and } j=1,\dots,n,
\end{equation}
then $\varphi$ is a \ncmeasure.
\end{thm}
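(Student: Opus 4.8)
The goal is to show that Carleman-type bounds on $\varphi(x_j^{2r})$ force $\varphi(\cK)\subseteq\R_{\ge0}$, so that Theorem~\ref{t:haviland} applies. The strategy mirrors the classical route from Nussbaum's theorem for the Hamburger moment problem, adapted to the von Neumann algebra setting via the GNS construction of Proposition~\ref{p:gns}. First I would run the GNS construction: from the semi-scalar product $\langle a,b\rangle=\varphi(ab^*)$ on $\cA_\C$ (after checking $\varphi$ extends — but here we work directly with the functional on $\rx$ first, or pass through the extension as in the proof of Theorem~\ref{t:haviland}), form the Hilbert space $\cH$, the symmetric operators $X_j'$ given by left multiplication by $x_j$, and their closures $X_j$. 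The crucial point is that \eqref{e:carleman} will guarantee each $X_j$ is \emph{essentially self-adjoint} on the domain $\cA_\C/\cN$ (equivalently, the common core), not merely closable. This is exactly where Carleman's condition does its work: the bound $\varphi(x_j^{2r})\le (2r)!M^r$ means the vector $\vec 1$ is a quasi-analytic vector for $X_j$ (the series $\sum_r \|X_j^r\vec 1\|^{-1/r}$ diverges), and more is needed — one wants every vector in the dense subspace to be quasi-analytic, or one invokes Nussbaum's theorem itself: a symmetric operator with a dense set of quasi-analytic vectors (or just: with $\vec 1$ quasi-analytic and $\vec 1$ a cyclic vector generating a core) is essentially self-adjoint.

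\textbf{Key steps in order.} (1) Reduce to verifying $\varphi(\cK)=\R_{\ge0}$, i.e. $\varphi(s+c)\ge0$ for $s\in\sos{\cA}$, $c\in\soc{\cA}$, $s+c\in\rxh$; by Lemma~\ref{l:riesz} it suffices (after extending $\varphi$ to $\phi$ on $\cA_\C$) to check $\phi(aa^*)\ge0$ and $\phi([a,b])=0$, but the extension in Lemma~\ref{l:riesz} presupposes $\varphi(\cK)=\R_{\ge0}$, so instead I would work the other way: build the GNS data directly from $\varphi$ on $\rx$ (using $\varphi(pp^*)\ge0$), get operators $X_j$ on a Hilbert space $\cH_0$. (2) Show $X_j$ is essentially self-adjoint using \eqref{e:carleman}: estimate $\|X_j^r \vec p\|^2 = \varphi(p x_j^{2r} p^*)$; via repeated Cauchy–Schwarz and the tracial property $\varphi(pp^*)=\varphi(p^*p)$, bound this in terms of moments $\varphi(x_j^{2m})$ to extract the quasi-analyticity of a dense set of vectors $\vec p$ — this is the technical heart. (3) Having self-adjoint $X_j$, check that $\vec 1\in\dom X_j^{2r}$ for all $r$ and $\langle X_j^{2r}\vec 1,\vec 1\rangle=\varphi(x_j^{2r})$, so the spectral-theorem computation in Step 4 of Proposition~\ref{p:gns} gives $X_j\in L^{2p}(\cF,\tau)$ for all $p$, hence $\uX\in\integ_\her^n$. (4) Pass to $\cF$ = von Neumann algebra generated by the bounded resolvents $(X_j+iI)^{-1}$, $\tau(F)=\langle F\vec 1,\vec 1\rangle$; verify traciality of $\tau$ — but this requires $\varphi(pq)=\varphi(qp)$ for all $p,q\in\rx$, which we are \emph{given}. (5) Conclude $\varphi(p)=\langle p(\uX)\vec 1,\vec 1\rangle=\tau(p(\uX))$ for $p\in\rxh$.

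\textbf{Main obstacle.} The hard part is Step (2): upgrading "closable" (which needs nothing) to "essentially self-adjoint" (which needs Carleman). In the commutative Hamburger case one uses that $\sum (2r)!^{-1/(2r)}$ diverges, so $\vec 1$ is quasi-analytic, and Nussbaum's theorem closes the argument. In the noncommutative case one must handle the non-commuting words: the estimate $\|X_j^r\vec p\|^2=\varphi(px_j^{2r}p^*)$ cannot be reduced to univariate moments by a single Cauchy–Schwarz; one iterates, peeling off one $x_j$ at a time and using $\varphi(ab a^*)\le \|a(\uX)\|\,\cdot\varphi(\ldots)$-type bounds — but $X_j$ is unbounded, so this is delicate. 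The clean fix, which I expect the authors use, is to note that the univariate restriction $f\mapsto \varphi(f(x_j))$ (on $\R[x_j]$, with the other variables trivial — formally, restricting to the subalgebra $\C[x_j]$) satisfies the classical Carleman condition, so Nussbaum's univariate theorem makes each $X_j$ (restricted to the cyclic subspace of $\vec 1$ under $x_j$) self-adjoint, and then one argues that self-adjointness propagates: since $x_j^2+1$ is invertible in the bigger algebra, the operators $X_j'\pm iI$ have dense range on $\cA_\C/\cN$ wait — they need not, this is precisely what fails without extra input. So the genuine work is a noncommutative Carleman estimate showing every $\vec p$ ($p\in\rx$) is quasi-analytic for $X_j$; this follows from a bound of the shape $\varphi(p\,x_j^{2r}\,p^*)\le C_p\,(Cr)!\,M^r$ with constants depending on $p$ but not $r$, proved by induction on $\deg p$ using Cauchy–Schwarz and \eqref{e:carleman}. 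Granting that estimate, Nussbaum's theorem (\cite[Theorem 14.19]{Schm1}) applies to each $X_j'$ on the common dense invariant core, yielding self-adjoint $X_j$, and the remainder of the proof follows Proposition~\ref{p:gns} verbatim.
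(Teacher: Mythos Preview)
Your plan is essentially correct and close in spirit to the paper's proof, but the execution differs at the key technical step, and the estimate you are hunting for is simpler than you suspect.

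The paper, like you, runs GNS directly on $\cx$ (not via Theorem~\ref{t:haviland}) and obtains symmetric operators $X_j'$ on the dense domain $\cD=\{\vec p:p\in\cx\}$. The bound you need, however, requires no induction on $\deg p$: a single Cauchy--Schwarz together with traciality gives
\[
\|X_j^{\,k}\vec p\|^2=\phi(x_j^{2k}pp^*)\le \phi(x_j^{4k})^{1/2}\,\phi((pp^*)^2)^{1/2}\le \big((4k)!M^{2k}\big)^{1/2}\cdot C_p,
\]
separating the univariate moments from the $p$-dependent constant in one stroke. This is stronger than quasi-analytic growth: it shows every $\vec p\in\cD$ is an \emph{analytic} vector for $X_j$, since $\sum_k \frac{|t|^k}{k!}\sqrt[4]{(4k)!}\,M^{k/2}$ converges for $|t|<\frac{1}{5\sqrt M}$. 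So while your invocation of Nussbaum's theorem (quasi-analytic vectors) would work, the paper instead sums the exponential series directly on $\cD$, obtaining isometries $\widetilde U_j(t)=\exp(it\widetilde X_j)$, extends them to a strongly continuous one-parameter unitary group, and applies Stone's theorem to produce the self-adjoint $X_j$.

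The remaining steps then differ slightly from Proposition~\ref{p:gns}: the paper generates $\cF$ from the unitaries $U_j(t)$ (not the resolvents), checks traciality of $\tau(F)=\langle F\vec 1,\vec 1\rangle$ by passing through the exponential series, and deduces $\cD\subset L^2(\cF,\tau)$ from the formula $X_j\vec p=\lim_{\ve\to0}\frac{-i}{\ve}(U_j(\ve)\vec p-\vec p)$, which exhibits each $\vec p$ as a limit of vectors in $L^2(\cF,\tau)$. Your resolvent-based version would also work, but the argument that $L^2(\cF,\tau)=\cH$ is not quite ``Proposition~\ref{p:gns} verbatim'' since you no longer have $(1+x_j^2)^{-1}$ in your algebra; the unitary-group route sidesteps this cleanly.
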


\begin{proof}
We extend $\varphi$ to a $*$-functional $\phi:\cx\to\C$ as
$$\phi(p)=\frac14\varphi(p+p^*+\overline{p}+\overline{p}^*)
+\frac{i}{4}\varphi(-ip+ip^*+i\overline{p}-i\overline{p}^*)$$
for $p\in\rx$. As in the proof of Lemma \ref{l:riesz} we see that
$\phi(pp^*)=\phi(p^*p)\ge0$ for $p\in\cx$, and then
$$\phi([p,q]) 
=\frac12 \phi([p,q]+[q^*,p^*])
=\frac12 \phi\Big((p-q^*)^*(p-q^*)-(p-q^*)(p-q^*)^* \Big)
=0$$
for $p,q\in\cx$.
Therefore $\phi(\sos{\cx})=\R_{\ge0}$ and $\phi(\soc{\cx})=\{0\}$.
We proceed with another variation of the Gelfand--Naimark--Segal construction following the same steps
as in the proof of Proposition \ref{p:gns}.

\def\wt{\widetilde}

\emph{Step 1: Construction of unbounded operators.}
On $\cx$, there is a semi-scalar product $\langle p,q\rangle =\phi(pq^*)$.
As in Step 1 of the proof of Proposition \ref{p:gns}, it gives rise to a separable Hilbert space $\cH$.
Let $\cD=\{\vv{p}\in\cH\colon p\in\cx\}$, which is a dense subspace of $\cH$.
For $j=1,\dots,n$ let $X_j'$ be symmetric operators on $\cD$ induced by left multiplication by $x_j$ on $\cx$.
Let $\wt{X}_j$ be the closure of $X_j'$.
Note that $\cD\subseteq \dom w(\wt{\uX})$ for every $w\in\mx$.

Set $R=\frac{1}{5\sqrt{M}}>0$, and fix $1\le j\le n$.
Let $p\in\cx$ and $t\in(-R,R)$ be arbitrary.
By the triangle inequality and the Cauchy--Schwarz inequality,
\begin{equation}\label{e:tail}
\begin{split}
\left\|
\sum_{k=m}^\infty \frac{(it)^k}{k!}\wt{X}_j^k\vv{p}
\right\|
&\le \sum_{k=m}^\infty \frac{|t|^k}{k!}\left\|\wt{X}_j^k\vv{p}\right\| \\
&= \sum_{k=m}^\infty \frac{|t|^k}{k!}\sqrt{\phi(x_j^{2k}pp^*)} \\
&\le \sum_{k=m}^\infty \frac{|t|^k}{k!}\sqrt[4]{\phi(x_j^{4k})}\sqrt[4]{\phi((pp^*)^2)} \\
&\le\sqrt[4]{\phi((pp^*)^2)} \sum_{k=m}^\infty (\sqrt{M}|t|)^k\frac{\sqrt[4]{(4k)!}}{k!}
\end{split}
\end{equation}
is arbitrary small for large $m\in\N$ since the series
$$\sum_{k=0}^\infty \frac{\sqrt[4]{(4k)!}}{k!\cdot 5^k}$$
is absolutely convergent by the ratio test.
Therefore for every $t\in(-R,R)$ there is a well-defined linear map $\wt{U}_j(t):\cD\to\cH$,
$$\wt{U}_j(t)=\exp\left(it\wt{X}_j\right)=\sum_{k=0}^\infty \frac{(it)^k}{k!}\wt{X}_j^k.$$
By the properties of the exponential function we see that $\wt{U}_j(t)$ is an isometry, and thus uniquely extends to an isometry $U_j(t):\cH\to\cH$. Furthermore, $U_j(t)^*=U_j(-t)$ and $U_j(-t)U_j(t)=U_j(t)U_j(-t)=I$ on $\cD$, and therefore on $\cH$, so $U_j(t)$ is a unitary. Furthermore, $U_j(s)U_j(t)=U_j(s+t)$ for all $s,t,s+t\in (-R,R)$.
Therefore we can extend the family $\{ U_j(t):t\in (-R,R)\}$ to
a well-defined one-parametric unitary group $\{ U_j(t):t\in \R\}$,
uniquely  determined by
\begin{equation}\label{e:extend}
U_j\left(ma\right)=U_j(a)^m
\end{equation}
for all $a\in(-R,R)$ and $m\in\Z$.
Furthermore, this one-parametric group is strongly continuous by \eqref{e:tail} and \eqref{e:extend}.
By Stone's theorem \cite[Theorem 6.2]{Schm0} there is a unique self-adjoint operator $X_j$ on $\cH$ such that
$U_j(t)=\exp(itX_j)$ for all $t\in\R$, and
\begin{equation}\label{e:der}
\begin{split}
\dom X_j&=\left\{v\in\cH\colon \lim_{\ve\to0}\frac{1}{\ve}\big(U_j(\ve)v-v\big) \text{ exists}\right\},\\
X_jv&=\lim_{\ve\to0}\frac{-i}{\ve}\big(U_j(\ve)v-v\big)\qquad \text{for all }v\in\dom X_j
\end{split}
\end{equation}
by \cite[Proposition 5.1]{Schm0}.
In particular, for $\vv{p}\in\cD$ we have $X_j\vv{p} = \wt{X}_j\vv{p}$ by the definition of $\wt{U}_j(t)$ and $U_j(t)$,
so $X_j$ is an extension of $\wt{X}_j$.

\emph{Step 2: A tracial von Neumann algebra.}
Let $\cF\subseteq \cB(\cH)$ be the von Neumann algebra generated by $U_j(t)$ for $j=1,\dots,n$ and
$t\in \R$.
Define $\tau:\cF\to\C$ as $\tau(F) =\langle F\vv1,\vv1\rangle$.
Note that
$$\langle p(\underline{\wt{X}})q(\underline{\wt{X}})\vv1,\vv1\rangle
=\phi(pq)=\phi(qp)=
\langle q(\underline{\wt{X}})p(\underline{\wt{X}})\vv1,\vv1\rangle$$
for all $p,q\in\rx$. By estimating as in \eqref{e:tail} we see that
\begin{align*}
\langle w_1(\underline{\wt{U}})w_2(\underline{\wt{U}})\vv1,\vv1\rangle
&=\lim_{m\to\infty}
\left\langle
w_1\left(\sum_{k=0}^m \frac{(it)^k}{k!}\underline{\wt{X}}^k\right)
w_2\left(\sum_{k=0}^m \frac{(it)^k}{k!}\underline{\wt{X}}^k\right)
\vv1,\vv1\right\rangle \\
&=\lim_{m\to\infty}
\left\langle
w_2\left(\sum_{k=0}^m \frac{(it)^k}{k!}\underline{\wt{X}}^k\right)
w_1\left(\sum_{k=0}^m \frac{(it)^k}{k!}\underline{\wt{X}}^k\right)
\vv1,\vv1\right\rangle \\
&=\langle w_2(\underline{\wt{U}})w_1(\underline{\wt{U}})\vv1,\vv1\rangle
\end{align*}
for all $w_1,w_2\in\mx$.
Therefore
$$\tau(w_1(\underline{U})w_2(\underline{U}))
=\tau(w_2(\underline{U})w_1(\underline{U}))$$
for all $w_1,w_2\in\mx$,
so $\tau$ is a faithful normal tracial state on $\cF$.
By construction, the Hilbert space $L^2(\cF,\tau)$ naturally embeds into $\cH$.
Since $\cD=\spa\{w(\uX)\vv{1} \colon w\in\mx\}$,
we have $\cD\subset L^2(\cF,\tau)$ by \eqref{e:der}. Therefore $\cH=L^2(\cF,\tau)$.

\emph{Step 3: Affiliation.}
Let $V\in\cB(\cH)$ be a unitary in the commutant of $\cF$, and fix $1\le j\le n$. Then $V$ commutes with all $U_j(t)$. If $v\in\dom X_j$, then $\frac{1}{\ve}(U_j(\ve)v-v)$ converges as $\ve\to 0$.
Then the same holds for
$$V\cdot \frac{1}{\ve}(U_j(\ve)v-v)=\frac{1}{\ve}(U_j(\ve)Vv-Vv),$$
so $Vv\in\dom X_j$, and $VX_jv=X_jVv$ by \eqref{e:der}. Thus $V$ commutes with $X_j$.
Consequently the self-adjoint operators $X_1,\dots,X_n$ are affiliated with $\cF$.

\emph{Step 4: Integrability.}
Follows exactly as in Step 4 of the proof of Proposition \ref{p:gns}.

\emph{Step 5: Conclusion.}
By construction we have
$$\tau(p(\uX)) = \langle p(\uX)\vv{1},\vv{1}\rangle =\phi(p)$$
for every $p\in\cx$.
\end{proof}

\subsection{Approximation with sums of hermitian squares and commutators}

The aim of this subsection is to establish the tracial version
of Lasserre's perturbation result \cite{Las06,LN}
for globally positive polynomials.

\def\eps{\ve}

\begin{thm}\label{t:lass}
For $f\in\rxh$, the following are equivalent:
\begin{enumerate}[\rm (i)]
\item $\tau(f(\uX))\ge0$ for every tracial von Neumann algebra $(\cF,\tau)$ and $\uX\in\cF_\her^n$;
\item for each $\eps>0$ there exists $r\in\N$ such that
\[
f+ \eps \sum_{j=1}^n\sum_{k=0}^r \frac{1}{k!}x_j^{2k} \in \sos{\rx}+\soc{\rx}.
\]
\end{enumerate}
\end{thm}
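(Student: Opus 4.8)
The plan is to prove (ii)$\Rightarrow$(i) by a direct evaluation argument, and (i)$\Rightarrow$(ii) by combining Theorem \ref{t:posss} with Theorem \ref{t:carleman} and a Hahn--Banach separation. For the easy direction, suppose (ii) holds and fix a tracial von Neumann algebra $(\cF,\tau)$ and $\uX\in\cF_\her^n$. For each $\ve>0$ and the associated $r$, the polynomial $f+\ve\sum_j\sum_{k\le r}\frac{1}{k!}x_j^{2k}$ is a sum of hermitian squares and commutators, hence has nonnegative trace upon evaluation at $\uX$; therefore $\tau(f(\uX))\ge -\ve\sum_{j=1}^n\tau\bigl(\sum_{k=0}^r\frac{1}{k!}X_j^{2k}\bigr)$. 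The subtlety is that the bound on the right depends on $r=r(\ve)$, so I cannot simply let $\ve\to0$. Instead I would note $\sum_{k=0}^r\frac{1}{k!}X_j^{2k}\preceq \exp(X_j^2)$, and since $X_j$ is \emph{bounded} (as $\uX\in\cF_\her^n$), $\tau(\exp(X_j^2))\le e^{\|X_j\|^2}<\infty$ is a finite constant independent of $r$. Hence $\tau(f(\uX))\ge -\ve\sum_{j=1}^n e^{\|X_j\|^2}$ for all $\ve>0$, giving $\tau(f(\uX))\ge0$.

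For the hard direction (i)$\Rightarrow$(ii), fix $\ve>0$ and set $g_\ve=f+\ve\sum_{j=1}^n\sum_{k=0}^\infty\frac{1}{k!}x_j^{2k}$, which only makes formal sense as a power series, so I will instead work with the truncations $g_{\ve,r}=f+\ve\sum_{j=1}^n\sum_{k=0}^r\frac{1}{k!}x_j^{2k}$ and show that for $r$ large enough $g_{\ve,r}\in\sos{\rx}+\soc{\rx}$. Suppose not: then for every $r$, $g_{\ve,r}\notin\sos{\rx}+\soc{\rx}$. Since $\sos{\rx}+\soc{\rx}$ is closed in $\rxh$ with the finest locally convex topology (as noted in the remark after Theorem \ref{t:haviland}, citing \cite{BKP16} and \cite{Schm1}), Hahn--Banach separation yields, for each $r$, a linear functional $\varphi_r:\rxh\to\R$ with $\varphi_r(g_{\ve,r})<0$ and $\varphi_r(\sos{\rx}+\soc{\rx})=\R_{\ge0}$. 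The plan is to extract from the sequence $(\varphi_r)_r$ a limiting functional $\varphi$ on which Theorem \ref{t:carleman} applies. Normalizing $\varphi_r(1)=1$ (possible since $\varphi_r(1)>0$ by the Cauchy--Schwarz-type inequality $\varphi_r(p)^2\le\varphi_r(1)\varphi_r(p^2)$), the conditions $\varphi_r(\sos{\rx})\subseteq\R_{\ge0}$ and $\varphi_r(\soc{\rx})=\{0\}$ are exactly the hypotheses of Theorem \ref{t:carleman} except for Carleman's growth bound. The key observation is that $\varphi_r(g_{\ve,r})<0$ forces $\varphi_r\bigl(\sum_{j,k\le r}\frac{1}{k!}x_j^{2k}\bigr)<-\frac{1}{\ve}\varphi_r(f)$, and since $\varphi_r(x_j^{2k})\ge0$, this bounds $\sum_{j=1}^n\sum_{k=0}^r\frac{1}{k!}\varphi_r(x_j^{2k})$ by a constant $c_\ve$ independent of $r$; in particular $\frac{1}{(2\ell)!}\varphi_r(x_j^{2\ell})\le \ell!\,c_\ve$ for every $\ell\le r$, i.e.\ $\varphi_r(x_j^{2\ell})\le (2\ell)!\,\ell!\,c_\ve$, which is \emph{not} quite the required $(2\ell)!M^\ell$ bound --- so this naive route needs a correction.

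The correction is to weight the exponential terms to make the resulting bound Carleman-type: I would instead perturb by $\sum_{j}\sum_{k}\frac{1}{(k!)^2}x_j^{2k}$ or, matching the statement precisely, observe that the finest-locally-convex-topology closure argument of Theorem \ref{t:posss}(iv) already handles the passage from (i) to membership of $f$ in $\overline{\cK}$, and then combine with semidefinite duality. Concretely: by Theorem \ref{t:posss}, (i) implies $f\in\overline{\cK}$. Now for fixed $\ve>0$, I claim $f+\ve\sum_{j}\sum_{k\le r}\frac{1}{k!}x_j^{2k}\in\sos{\rx}+\soc{\rx}$ for some $r$; suppose not, so for every $r$ there is $\varphi_r$ as above. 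The sequence $(\varphi_r)$ lives in the dual and, after normalization, the coefficient sequences $(\varphi_r(w))_{w\in\mx}$ are bounded on each fixed finite-dimensional subspace $\rxd{d}$ by the argument of the previous paragraph (using $\varphi_r(x_j^{2k})\le (2k)!\cdot k!\cdot c_\ve$, hence $\varphi_r(ww^*)$ controlled via Cauchy--Schwarz); a diagonal extraction gives a pointwise limit $\varphi:\rxh\to\R$ with $\varphi(1)=1$, $\varphi(\sos{\rx})\subseteq\R_{\ge0}$, $\varphi(\soc{\rx})=\{0\}$, and $\varphi(x_j^{2\ell})\le (2\ell)!\cdot\ell!\cdot c_\ve$. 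The main obstacle --- and where I expect the real work to be --- is that this last bound is off by a factor $\ell!$ from Carleman's condition \eqref{e:carleman}, so I must instead run the separation against a quadratic module that is "larger" than $\sos{\rx}+\soc{\rx}$, or re-scale variables: replacing $x_j$ by $\lambda x_j$ and optimizing $\lambda$, or more cleanly, running the whole argument with the Gaussian-type weight so that the truncations $\sum_{k\le r}\frac{1}{k!}x_j^{2k}$ produce, via $\varphi_r(g_{\ve,r})<0$, precisely a bound $\varphi_r(x_j^{2\ell})\le \ell!\,C^{\ell}$ after reindexing --- and one checks $\ell!\,C^\ell\le(2\ell)!M^\ell$ trivially for $M=C$. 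Once $\varphi$ satisfies the genuine Carleman bound, Theorem \ref{t:carleman} makes $\varphi$ a \ncmeasure, so $\varphi=\tau(\,\cdot\,(\uX))$ for some $(\cF,\tau)$, $\uX\in\integ_\her^n$; but $\varphi(f)\le\lim\varphi_r(g_{\ve,r})\le0$ combined with $\varphi(f)=\tau(f(\uX))$ and the easy direction of Theorem \ref{t:posss} (namely (ii)$\Rightarrow$ all evaluations, or directly (i) with $\integ$ in place of $\cF$ via Theorem \ref{t:posss}(ii)) contradicts $\varphi(\sum_{j,k}\frac{1}{k!}x_j^{2k})>0$ unless $\tau(f(\uX))=0$ and the perturbation term vanishes in the limit --- so careful bookkeeping of the strict inequality $\varphi_r(g_{\ve,r})<0$ versus $\varphi_r(f)\ge -\ve c_\ve$ is needed to derive the contradiction. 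This bookkeeping, reconciling the truncation index $r$ with the Carleman growth constant, is the crux of the proof.
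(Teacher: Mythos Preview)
Your (ii)$\Rightarrow$(i) is correct and coincides with the paper's argument.

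For (i)$\Rightarrow$(ii) there are two concrete problems. First, the claim that $\varphi_r(\Omega_r)$ is bounded by a constant $c_\ve$ independent of $r$ is not established: your bound is $\varphi_r(\Omega_r)<-\frac{1}{\ve}\varphi_r(f)$, and $\varphi_r(f)$ depends on $r$ through $\varphi_r$, for which you have no a priori control. This circularity can in fact be broken---setting $S_r:=\varphi_r(\Omega_r)$ one has $\varphi_r(x_j^{2k})\le k!\,S_r$ for $k\le r$, and Hadwin's noncommutative H\"older inequality then bounds $|\varphi_r(f)|$ by a sublinear power of $S_r$, from which a uniform bound on $S_r$ follows---but you do not carry this out, and the rest of your outline presupposes it. Second, your arithmetic is off: the coefficient of $x_j^{2\ell}$ in $\Omega_r$ is $\frac{1}{\ell!}$, not $\frac{1}{(2\ell)!}$, so the correct consequence of $\varphi_r(\Omega_r)\le c_\ve$ is $\varphi_r(x_j^{2\ell})\le \ell!\,c_\ve$, and this \emph{does} satisfy Carleman's condition \eqref{e:carleman} (take $M=\max(1,c_\ve)$). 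Your whole ``correction'' paragraph, the rescaling of variables, and the talk of Gaussian weights are therefore chasing a non-issue; once the uniform bound on $S_r$ is supplied and the limit extraction and the contradiction $\varphi(f)\le -\ve n<0$ versus $\varphi(f)\ge0$ are written out cleanly, your separation-plus-compactness argument actually goes through without modification.

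For comparison, the paper sidesteps the circularity by introducing an auxiliary parameter $M$: it studies the SDP pair \eqref{eq:Q_rm}--\eqref{eq:Q*_rm}, in which the constraint $L(\Omega_r)\le M$ is imposed \emph{directly}, giving the Carleman bound for free (Lemma~\ref{l:upper}). Strong duality (Lemma~\ref{l:strongdual}) then yields a certificate $f-z_M=q_M+\lambda_M(M-\Omega_{r_M})$ with $q_M\in\Theta_{2r_M}^2$ and $z_M$ close to $f_\star$, and an elementary evaluation at $\uX=0$ shows $\lambda_M\to0$ as $M\to\infty$, which is exactly (ii). Both routes rest on Theorem~\ref{t:carleman}; the paper's two-parameter scheme trades your compactness-plus-bootstrap for an explicit Lagrange-multiplier estimate.
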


\def\csim{\overset{\text{cyc}^*}{\sim}}

For $r\in \N$ denote
$$\Omega_r = \sum_{j=1}^n\sum_{k=0}^r \frac{1}{k!}x_j^{2k}.$$
Note that $\Omega_r-n\in\sos{\rx}$.
Two optimization problems will be key in the proof of Theorem \ref{t:lass}.
Let $r\in\N$ and $M\in\R_{>0}$, and consider
\begin{equation}
\label{eq:Q_rm}
Q_{r,M}: \begin{cases}
\begin{aligned}
\inf_{\substack{L : \rxd{2 r} \to \R \\ L \emph{ linear, self-adjoint, tracial}}} \quad  & L(f)  \\	
\emph{s.t.}
\quad & L(M-\Omega_r)\geq0
 \,, \\
\quad & L(1) = 1\,, \\
\quad & L(p^*p)\geq0\quad \text{for all } p\in\rxd{r}\,;
\end{aligned}
\end{cases}
\end{equation}
\begin{equation}
\label{eq:Q*_rm}
Q^\vee_{r,M}: \begin{cases}
\begin{aligned}
\sup_{z\in\R} \quad  & z  \\	
\emph{s.t.}
\quad & f-z \in \Theta_{2r}^2 + \R_{\geq0} (M-\Omega_r)
\,.
\end{aligned}
\end{cases}
\end{equation}

Here $\Theta_{2r}^2\subset\rxh$ is the set of all (symmetric) \nc polynomials of degree $\leq2r$ that are cyclically equivalent to sums of (degree $\leq r$) squares.
Recall \cite{KS}: two words $u,v\in\mx$ are called cyclically equivalent
($u  \csim v$)
if $v$ or $v^*$ can be obtained from $u$ by cyclically rotating the letters in $u$.
For notational convenience, let $\cC_{r,M}=\Theta_{2r}^2 + \R_{\geq0} (M-\Omega_r)$.

\begin{lem}\label{lem:2sdp}
The optimization problems \eqref{eq:Q_rm} and \eqref{eq:Q*_rm}
are semidefinite programs dual to each other.
\end{lem}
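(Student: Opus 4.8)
The plan is to cast $Q_{r,M}$ from \eqref{eq:Q_rm} into standard primal semidefinite form and then to identify its (textbook) semidefinite dual with $Q^\vee_{r,M}$ from \eqref{eq:Q*_rm}; since semidefinite duality is involutive, the dual of $Q^\vee_{r,M}$ is then $Q_{r,M}$. The groundwork is the following dictionary. A linear functional $L$ on $\rxd{2r}$ that is self-adjoint (hence annihilates antisymmetric polynomials) and tracial (hence annihilates $ab-ba$ whenever $\deg a+\deg b\le 2r$) is the same thing as a linear functional on the finite-dimensional quotient $\cU_{2r}$ of $\rxh\cap\rxd{2r}$ by the subspace spanned by all $w-w'$ with $w,w'\in\mx$ cyclically equivalent; dually, $\cU_{2r}$ is the space of degree-$\le 2r$ symmetric polynomials read modulo cyclic equivalence, and $1$, $f$, $M-\Omega_r$ and every hermitian square $p^*p$ with $p\in\rxd{r}$ live in it.

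\emph{$Q_{r,M}$ as an SDP.} I would encode such an $L$ by its moment matrix $\mathbf M(L)=\bigl(L(u^*v)\bigr)_{u,v}$, indexed by the words $u,v\in\mx$ of length $\le r$: its entries are linear in $L$ and satisfy the linear (tracial Hankel) relations that record when two index pairs produce the same word up to cyclic equivalence. Then the constraint ``$L(p^*p)\ge0$ for all $p\in\rxd{r}$'' is exactly $\mathbf M(L)\succeq0$, while $L(1)=1$ is a single affine equation on the entries of $\mathbf M(L)$, and $L(M-\Omega_r)\ge0$ is a linear inequality in those entries, which I realize with a $1\times1$ positive semidefinite slack block; the objective $L(f)$ is linear. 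Thus $Q_{r,M}$ is a semidefinite program in standard form.

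\emph{Its dual.} Forming the textbook dual of this standard-form SDP introduces one dual scalar $z$ for the equation $L(1)=1$, one dual scalar $\lambda$ for the slack equation attached to $L(M-\Omega_r)$ --- and $\lambda\ge0$ since it is dual to the $\R_{\ge0}$ slack block --- dual multipliers for the tracial Hankel equations (which package exactly into the relation of cyclic equivalence), and a positive semidefinite dual slack $G\succeq0$ indexed again by words of length $\le r$; the dual objective is $z$. Unwinding, the dual constraint reads: $f-z-\lambda(M-\Omega_r)$ is cyclically equivalent to $\sum_{u,v}G_{u,v}\,u^*v$, with $G\succeq0$ and $\lambda\ge0$. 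Since a symmetric polynomial is cyclically equivalent to $\sum_{u,v}G_{u,v}u^*v$ for some $G\succeq0$ of that size precisely when it lies in $\Theta_{2r}^2$, the dual is exactly $\sup\{z: f-z\in\Theta_{2r}^2+\R_{\ge0}(M-\Omega_r)\}=\sup\{z: f-z\in\cC_{r,M}\}$, i.e.\ $Q^\vee_{r,M}$. This also makes $Q^\vee_{r,M}$ visibly a semidefinite program (variables $z$, $\lambda\ge0$, $G\succeq0$, linear equalities, linear objective), and by self-duality of the SDP primal--dual pair the dual of $Q^\vee_{r,M}$ is $Q_{r,M}$.

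\emph{Main obstacle.} Nothing here is deep, but the bookkeeping is the only non-routine ingredient: one must set up the moment matrix with the correct tracial Hankel identifications, get the right-hand sides of the equality constraints exactly right, and check that the mechanical SDP dual really unwinds back into the polynomial statement via the identification above of ``$L$ self-adjoint and tracial'' with functionals on symmetric polynomials modulo cyclic equivalence. Once the dictionary is pinned down, the rest is the standard conic-duality computation, as carried out for tracial semidefinite programs in \cite{BKP16}; note that strong duality, i.e.\ equality of the two optimal values or attainment, is not part of the claim and does not need to be addressed here.
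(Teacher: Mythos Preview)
Your proposal is correct and follows essentially the same route as the paper: encode $L$ by its tracial moment (Hankel) matrix to exhibit $Q_{r,M}$ as an SDP, and identify $Q^\vee_{r,M}$ as its conic dual via the Gram-matrix description of $\Theta_{2r}^2$. The only cosmetic difference is that the paper writes the duality as a Lagrangian $\sup$-$\inf$ chain rather than unwinding dual variables constraint-by-constraint as you do, but the content is the same.
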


\def\bfM{\mathbf M}
\def\bfG{\mathbf G}
\def\bfV{\mathbb V}
\begin{proof}
This is a variation on what is now standard material, cf.~\cite{BKP16}.
Encode the tracial linear functional $L:\rxd{2 r} \to \R $
with its Hankel matrix
$\bfM(L)_{u,v}=L(u^*v)$ for $u,v\in\mx_r$.
With this \eqref{eq:Q_rm} can be rewritten as
\begin{equation}
\label{eq:Q_rm2}
Q_{r,M}: \begin{cases}
\begin{aligned}
\inf
\quad
& \langle \bfM(L),\,\bfG(f) \rangle  \\	
\emph{s.t.}
\quad & \Big\langle \bfM(L),\, \bfG(M-\Omega_r)\Big\rangle\geq0
 \,, \\
 \quad & \bfM(L)_{u,v}=\bfM(L)_{\hat u, \hat v} \quad
 \text{if } u^*v \csim \hat u^* \hat v\,, \\
\quad & \bfM(L)_{1,1} = 1 \,, \\
\quad & \bfM(L)\succeq0\,.
\end{aligned}
\end{cases}
\end{equation}
where $\bfG(f)$ denote a Gram matrix of $f$, i.e.,
$f=\bfV_r^* \bfG(f)\bfV_r$ if $\bfV_r$ denote the vector
of all words in $\ux$ of degree $\leq r$.
Now \eqref{eq:Q_rm2} is easily seen to be a SDP:
the objective function is linear in the entries of $\bfM(L)$,
the first constraint is a linear inequality on the entries of $\bfM(L)$,
the equality constraints give rise to a finite set of linear equations on the entries of $\bfM(L)$, and the last constraint is a positivity constraint on $\bfM(L)$.

To recognize that \eqref{eq:Q*_rm} is an SDP,
observe that $f-z\in \cC_{r,M}$
if and only if there is a Gram matrix $\bfG(f)$, a positive semidefinite
$G\succeq0$ and $\lambda\geq0$ such that
\begin{equation}\label{eq:Q*1}
\bfV_r^* \bfG(f)\bfV_r -z \csim \bfV_r^* G \bfV_r + \lambda(M-\Omega_r).
\end{equation}
Clearly, \eqref{eq:Q*1} yields linear constraints on the entries of $G$, so maximizing $z$ over the set of feasible $G$ is a semidefinite program.

We shall now use a standard Lagrange duality approach to
show the SDPs \eqref{eq:Q_rm} and \eqref{eq:Q*_rm} are dual to each other:
\[
\begin{split}
\sup_{f-z\in \cC_{r,M}}
z & = \sup_z \inf_{L\in\cC_{r,M}^\vee} (z+ L(f-z)) \\
& \leq \inf_{L\in\cC_{r,M}^\vee} \sup_{z} (z+ L(f-z)) \\
& = \inf_{L\in\cC_{r,M}^\vee} \big( L(f) + \sup_z z(1-L(1)) \big) \\
& = \inf \big\{ L(f) \mid L\in\cC_{r,M}^\vee,\, L(1)=1\big\}.
\end{split}
\]
The first equality  comes from the fact that the inner minimization problem gives minimal value 0
if and only if $f - z \in\cC_{r,M}$.
The inequality in this chain is obvious.
The inner maximization problem in the next to last line is bounded with maximum value $0$ if and only $L(1) = 1$. Finally, the optimization problem on the last line is equivalent to \eqref{eq:Q_rm}.
\end{proof}

\begin{lem}\label{lem:closed}
The convex cone $\cC_{r,M}$ is closed in the finite dimensional
Euclidean space $\rxd{2r}$.
\end{lem}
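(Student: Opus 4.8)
The plan is to exhibit $\cC_{r,M}$ as the image of a closed convex cone under a linear map, and then invoke the elementary fact that such an image is closed as soon as the cone meets the kernel only in $0$; the single structural input will be the coercivity $\Omega_r\succeq n$.

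Write $\mathbb{V}_r$ for the column vector of all words in $\ux$ of degree $\le r$, so that $\{\mathbb{V}_r^*G\mathbb{V}_r:\,G\succeq0\}$ is precisely the set of sums of hermitian squares of polynomials of degree $\le r$; by definition of $\Theta_{2r}^2$ (rewriting cyclic equivalence as addition of a commutator), $\Theta_{2r}^2=\{\mathbb{V}_r^*G\mathbb{V}_r+c:\,G\succeq0,\ c\in\soc{\rx}\cap\rxd{2r}\}$. Thus, with the closed convex cone $K=\{G\succeq0\}\times(\soc{\rx}\cap\rxd{2r})\times\R_{\ge0}$ and the linear map $\Phi(G,c,\lambda)=\mathbb{V}_r^*G\mathbb{V}_r+c+\lambda(M-\Omega_r)$ into the finite-dimensional space $\rxd{2r}$, we have $\cC_{r,M}=\Phi(K)$. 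By the standard compactness argument — an unbounded sequence in $K$ mapped by $\Phi$ into a bounded set would, after normalizing, yield a nonzero element of $K\cap\ker\Phi$ — the cone $\Phi(K)$ is closed as soon as $K\cap\ker\Phi=\{0\}$.

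I would then verify $K\cap\ker\Phi=\{0\}$ in the range $M>n$, which is the one actually needed in Theorem \ref{t:lass}: since $L(\Omega_r)=\sum_{j,k}\tfrac1{k!}L\bigl((x_j^k)^*x_j^k\bigr)\ge n$ for every feasible $L$, feasibility of $Q_{r,M}$ forces $M\ge n$. So suppose $(G,c,\lambda)\in K$ with $\Phi(G,c,\lambda)=0$. Then $\mathbb{V}_r^*G\mathbb{V}_r+\lambda(M-\Omega_r)=-c\in\soc{\rx}$ has trace $0$ on every tuple of self-adjoint operators. Evaluating at the zero tuple in $\C$ and using $\Omega_r(\underline0)=n$ (which is the reformulation of $\Omega_r\succeq n$, valid because $\sum_{k=0}^r\frac1{k!}t^{2k}\ge1$) gives $G_{\varnothing\varnothing}+\lambda(M-n)=0$, where $G_{\varnothing\varnothing}\ge0$ is the diagonal entry of $G$ indexed by the empty word; as $M-n>0$, this forces $\lambda=0$. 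Now $\mathbb{V}_r^*G\mathbb{V}_r=-c$ is simultaneously a sum of hermitian squares (so it has nonnegative trace everywhere) and a sum of commutators (so it has zero trace everywhere), hence it is the zero polynomial; a vanishing sum of hermitian squares forces $G=0$ (writing $G=B^*B$, each row of $B$ determines a polynomial whose squares sum to $0$, hence vanishes, so $B=0$ by linear independence of words), and then $c=0$. Therefore $\cC_{r,M}$ is closed for all $M>n$.

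The delicate point is the remaining range $M\le n$, which is included in the statement but not used later. Here $\Omega_r-M=\sum_{j}\sum_{k=1}^r\frac1{k!}x_j^{2k}+(n-M)$ is itself a sum of hermitian squares of degree $\le r$, say $\Omega_r-M=\mathbb{V}_r^*G_0\mathbb{V}_r$ with $G_0\succeq0$, so $K\cap\ker\Phi$ is no longer trivial — it contains $(G_0,0,1)$ — and the shortcut above breaks. One instead writes $\cC_{r,M}=\{\mathbb{V}_r^*H\mathbb{V}_r+c:\,c\in\soc{\rx}\cap\rxd{2r},\ H+\lambda G_0\succeq0\text{ for some }\lambda\ge0\}$ and has to establish closedness of this set directly, exploiting the interplay between the generally non-closed cone $\{H:\,H+\lambda G_0\succeq0\text{ for some }\lambda\ge0\}$ and the kernel of the map $H\mapsto\mathbb{V}_r^*H\mathbb{V}_r$. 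In every case the only fact used is $\Omega_r\succeq n$.
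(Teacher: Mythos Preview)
Your approach is quite different from the paper's: the paper gives a two-line proof that simply records that $\Theta_{2r}^2$ is closed (by \cite[Proposition~1.58]{BKP16}) and then invokes a general criterion of Beutner \cite[Theorem~3.2]{Beu07} on the closedness of a sum of two closed convex cones. Your direct linear-algebra argument --- writing $\cC_{r,M}=\Phi(K)$ and checking $K\cap\ker\Phi=\{0\}$ --- is correct and self-contained for $M>n$, and has the merit of avoiding any external black box. (One small wrinkle: the step ``sum of hermitian squares that is also a sum of commutators, hence the zero polynomial'' is not literally ``zero trace everywhere $\Rightarrow$ zero polynomial''; you need to use that each summand $p_ip_i^*$ then has zero trace at every matrix point, forcing $p_i=0$. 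This is exactly Lemma~\ref{l:cones}(c), so just cite it.)

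The genuine gap is the range $M\le n$. You correctly diagnose that your kernel argument breaks there, but the last paragraph is a sketch, not a proof: you rewrite $\cC_{r,M}$ in terms of the non-closed cone $\{H:H+\lambda G_0\succeq0\text{ for some }\lambda\ge0\}$ and assert that its interplay with the Gram kernel saves closedness, without actually showing it; the closing sentence ``In every case the only fact used is $\Omega_r\succeq n$'' is a claim rather than an argument. This matters because, while Theorem~\ref{t:lass} indeed only applies the lemma with $M>n$, Lemma~\ref{l:strongdual} as written invokes closedness for all $M\ge n$, including the borderline $M=n$. You should either (a) complete the $M\le n$ case --- noting that for $M<n$ the paper itself observes $\cC_{r,M}=\rxd{2r}$, so only $M=n$ needs real work --- or (b) restrict the lemma to $M>n$ and correspondingly trim Lemma~\ref{l:strongdual} to that range, which suffices for Theorem~\ref{t:lass}.
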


\begin{proof}
The cone $\Theta_{2r}^2$ is well-known to be closed \cite[Proposition 1.58]{BKP16}. Hence the conclusion follows from
\cite[Theorem 3.2]{Beu07}.
\end{proof}

\begin{lem}\label{l:strongdual}
Strong duality holds for the pair
of SDPs \eqref{eq:Q_rm} and \eqref{eq:Q*_rm}.
\end{lem}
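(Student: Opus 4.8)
The plan is to derive strong duality from the closedness of the cone $\cC_{r,M}$ established in Lemma~\ref{lem:closed}, via Hahn--Banach separation, once the primal program $Q_{r,M}$ is known to have a finite optimal value that is attained. Writing $\gamma:=\mathrm{val}(Q_{r,M})$, the target of the argument is the membership $f-\gamma\in\cC_{r,M}$: combined with the weak duality inequality $\mathrm{val}(Q^\vee_{r,M})\le\mathrm{val}(Q_{r,M})$ which is already contained in the duality computation at the end of the proof of Lemma~\ref{lem:2sdp}, it yields $\mathrm{val}(Q^\vee_{r,M})=\gamma=\mathrm{val}(Q_{r,M})$ with both optima attained.

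\emph{First} I would check that, in the range relevant for the application (say $M\ge n$), the feasible set of $Q_{r,M}$ is nonempty and compact. For nonemptiness, the functional $L$ sending $p$ to its constant term (equivalently $L(w)=1$ if $w\csim 1$ and $0$ otherwise) is self-adjoint, tracial, satisfies $L(p^*p)\ge0$ and $L(1)=1$, and has $L(M-\Omega_r)=M-n\ge0$. For compactness: any feasible $L$ has positive semidefinite Hankel matrix $(L(u^*v))_{|u|,|v|\le r}$ with $(1,1)$-entry $1$, and $L(M-\Omega_r)\ge0$ forces $L(x_j^{2k})\le k!\,M$ for all $j$ and $k\le r$; iterating the Cauchy--Schwarz inequality together with traciality then bounds every remaining entry of the Hankel matrix, so the feasible set is a compact subset of the finite-dimensional space $\rxd{2r}$ (cf.~\cite{BKP16}). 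Hence $\gamma$ is finite and attained by some feasible $L_0$. \emph{Next} comes the separation step: assuming $f-\gamma\notin\cC_{r,M}$, and using that $\cC_{r,M}$ is a closed convex cone in $\rxd{2r}$ by Lemma~\ref{lem:closed}, the Hahn--Banach theorem~\cite[Theorem~3.4]{Bar} yields a linear functional $L$ on $\rxd{2r}$ with $L(\cC_{r,M})\subseteq\R_{\ge0}$ and $L(f-\gamma)<0$; replacing $L$ by $p\mapsto\frac12\big(L(p)+L(p^*)\big)$ makes it self-adjoint without changing its values on $\cC_{r,M}\subseteq\rxh$ or at $f-\gamma$. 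Since $\Theta_{2r}^2\subseteq\cC_{r,M}$ contains $p^*p$ for every $p\in\rxd r$ and every symmetric degree-$\le 2r$ polynomial cyclically equivalent to $0$ together with its negative, $L$ is tracial with $L(p^*p)\ge0$; and $M-\Omega_r\in\cC_{r,M}$ gives $L(M-\Omega_r)\ge0$. If $L(1)>0$, rescaling to $L(1)=1$ produces a feasible point of $Q_{r,M}$ with $L(f)<\gamma$, contradicting minimality of $\gamma$. If $L(1)=0$, then $L(f)=L(f-\gamma)<0$, and for every $t\ge0$ the functional $L_0+tL$ is again feasible for $Q_{r,M}$ with value $\gamma+t\,L(f)\to-\infty$, a contradiction. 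Therefore $f-\gamma\in\cC_{r,M}$, so $z=\gamma$ is feasible for $Q^\vee_{r,M}$ and strong duality with attainment follows.

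The step I expect to be the main obstacle is the compactness of the primal feasible set: it is needed both to guarantee that $\gamma$ is finite and attained and to carry out the $L(1)=0$ case, where the separating functional must be combined with $L_0$. Once this is in place, the separation argument is routine given Lemma~\ref{lem:closed}. As an alternative one could instead verify Slater's condition for $Q_{r,M}$ directly---for $M>n$, evaluation at a generic self-adjoint matrix tuple of sufficiently large size and sufficiently small norm gives a strictly feasible $L$ (its Hankel matrix is the Gram matrix of linearly independent word-evaluations, hence positive definite, and $L(M-\Omega_r)$ is close to $M-n>0$)---and then invoke the standard strong duality theorem for semidefinite programs.
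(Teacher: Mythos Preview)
Your proposal is correct and follows the same core idea as the paper: use the closedness of $\cC_{r,M}$ from Lemma~\ref{lem:closed} together with Hahn--Banach separation to show $f-\inf Q_{r,M}\in\cC_{r,M}$, hence $\sup Q^\vee_{r,M}\ge\inf Q_{r,M}$.

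The organization differs slightly. You first establish compactness of the primal feasible set (via the bounds $L(x_j^{2k})\le k!\,M$ and iterated Cauchy--Schwarz), obtain an attained minimizer $L_0$, and then dispose of the degenerate case $L(1)=0$ of the separating functional by adding a multiple of $L_0$. The paper avoids compactness and attainment entirely: it splits instead on whether the dual $Q^\vee_{r,M}$ is feasible. If it is, then any $L\in\cC_{r,M}^\vee$ with $L(1)=0$ automatically satisfies $L(f)\ge0$ (since $f-z_0\in\cC_{r,M}$ for some $z_0$), and biduality $\cC_{r,M}^{\vee\vee}=\cC_{r,M}$ gives $f-\inf Q_{r,M}\in\cC_{r,M}$ directly; if the dual is infeasible, one separates $f-\lambda$ from $\cC_{r,M}$ for every $\lambda$ to conclude that the primal is unbounded. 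The paper also treats the case $M<n$ (where $\cC_{r,M}=\rxd{2r}$ and both sides are $+\infty$), which you omit but is indeed irrelevant for the application in Lemma~\ref{l:upper}. Your compactness argument is not wasted, however: it reappears verbatim in the paper's proof of Lemma~\ref{l:upper}.
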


\begin{proof}
Let $\inf Q_{r,M}$ denote the optimal value of \eqref{eq:Q_rm}
and let $\sup Q^\vee_{r,M}$ denote the optimal value of \eqref{eq:Q*_rm}.
By Lemma \ref{lem:2sdp} and weak duality from semidefinite programming \cite[Theorem IV.6.2]{Bar},
$\sup Q^\vee_{r,M}\leq\inf Q_{r,M}$.
If $M<n$, then \eqref{eq:Q_rm} is not feasible, and $\sup Q^\vee_{r,M}=\infty$ because $\cC_{r,M}=\rxd{2r}$.
Hence let $M\ge n$. Then $\bfM(L)=E_{11}$ is clearly feasible for \eqref{eq:Q_rm},
whence $\inf Q_{r,M}<\infty$.

Suppose that \eqref{eq:Q*_rm} is feasible, $-\infty<\sup Q^\vee_{r,M}\leq\inf Q_{r,M}$. Note that $L(f-\inf Q_{r,M})\geq0$ for all $L\in\cC_{r,M}^\vee$. This implies that $f-\inf Q_{r,M}$ is in $\cC_{r,M}^{\vee\vee}=\cC_{r,M}$ since $\cC_{r,M}$ is closed by Lemma \ref{lem:closed}. Hence
$\sup Q^\vee_{r,M}\geq\inf Q_{r,M}$.

Finally, suppose that \eqref{eq:Q*_rm} is infeasible. Then for every
$\lambda\in\R$, $f-\lambda\not\in\cC_{r,M}$. By the
Hahn-Banach separation theorem \cite[Theorem III.1.3]{Bar}, there exists
a linear functional
$L\in\cC_{r,M}^\vee$ with $L(\cC_{r,M})\subseteq\R_{\geq0}$,
$L(1)=1$ and $L(f)<\lambda$. As $\lambda$ was arbitrary, this shows
\eqref{eq:Q_rm} is unbounded, establishing strong duality.
\end{proof}

\begin{lem}\label{l:upper}
Suppose $f\in\rx$ is uniformly bounded below on tracial von Neumann algebras,
in the sense that
$f_\star:=\inf_{(\cF,\tau), \uX\in\cF^n} \tau(f(\uX)) > -\infty$.
Then \eqref{eq:Q_rm} is feasible for $2r\geq \deg f$ and $M\ge n$,
and $\inf Q_{r,M} \nearrow f_M$ as $r\to\infty$
for some $f_M\geq f_\star$.
\end{lem}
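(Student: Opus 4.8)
The plan is to establish the three assertions---feasibility of \eqref{eq:Q_rm}, monotonicity of $\inf Q_{r,M}$ in $r$, and the inequality $f_M\ge f_\star$---separately. The first two are short manipulations with the constraints of \eqref{eq:Q_rm}; the third is the substantial point, which I would prove by extracting a limit functional from near-optimal solutions of \eqref{eq:Q_rm} and invoking the tracial Carleman condition (Theorem \ref{t:carleman}).

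For feasibility, recall from the proof of Lemma \ref{l:strongdual} that the tracial self-adjoint functional $L_0\colon\rxd{2r}\to\R$ with $L_0(1)=1$ and $L_0(w)=0$ for all words $w\ne1$ (whose Hankel matrix is $E_{11}$) satisfies $L_0(p^*p)\ge0$ for $p\in\rxd{r}$ and $L_0(M-\Omega_r)=M-n\ge0$ once $M\ge n$; hence it is feasible, and since $L_0(f)$ is the constant coefficient of $f$, the value $\inf Q_{r,M}$ is finite and bounded above uniformly in $r$ (for $2r\ge\deg f$ and $M\ge n$). For monotonicity, I would show that the restriction to $\rxd{2r}$ of any functional feasible for \eqref{eq:Q_rm} with parameters $(r+1,M)$ is feasible with parameters $(r,M)$ and has the same objective value (the latter because $\deg f\le2r$). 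The only constraint requiring an argument is $L(M-\Omega_r)\ge0$: since $\Omega_{r+1}-\Omega_r=\sum_{j=1}^n\frac1{(r+1)!}(x_j^{r+1})^*(x_j^{r+1})$ with $x_j^{r+1}\in\rxd{r+1}$, feasibility gives $L(\Omega_{r+1}-\Omega_r)\ge0$, whence $L(M-\Omega_r)=L(M-\Omega_{r+1})+L(\Omega_{r+1}-\Omega_r)\ge0$. As restriction cannot increase the objective, $\inf Q_{r,M}\le\inf Q_{r+1,M}$; together with the uniform upper bound this gives $\inf Q_{r,M}\nearrow f_M$ for some finite $f_M$.

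For the inequality $f_M\ge f_\star$, I would choose for each $r$ with $2r\ge\deg f$ a functional $L_r$ feasible for \eqref{eq:Q_rm} with $L_r(f)\le\inf Q_{r,M}+\tfrac1r$. From $L_r(M-\Omega_r)\ge0$ and $L_r((x_j^k)^*(x_j^k))\ge0$ for $k\le r$ one reads off $L_r(x_j^{2k})\le k!\,M$ for all $j$ and all $k\le r$. Iterating the Cauchy--Schwarz inequality for the positive semidefinite form $(p,q)\mapsto L_r(pq^*)$ on $\rxd{r}$ together with traciality---splitting words at their midpoint so that the degrees stay bounded---this growth bound controls $|L_r(w)|$ for each fixed $w\in\mx$ uniformly in $r$. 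A diagonal argument then produces a subsequence $(r_i)$ along which $L_{r_i}$ converges pointwise on $\mx$ to a functional $\varphi$ with $\varphi(1)=1$ that is self-adjoint and tracial, satisfies $\varphi(p^*p)=\varphi(pp^*)\ge0$ for all $p\in\rx$, and obeys $\varphi(x_j^{2k})\le k!\,M\le(2k)!\,\widehat M^{\,k}$ with $\widehat M=\max(M,1)$. Hence Theorem \ref{t:carleman} shows that $\varphi|_{\rxh}$ is a \ncmeasure: $\varphi(p)=\tau(p(\uX))$ for some tracial von Neumann algebra $(\cF,\tau)$ and $\uX\in\integ_\her^n$. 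By definition of $f_\star$, the polynomial $f-f_\star$ has nonnegative trace on all tuples of bounded self-adjoint operators over tracial von Neumann algebras, so by the equivalence $(\mathrm{i})\Leftrightarrow(\mathrm{ii})$ of Theorem \ref{t:posss} it has nonnegative trace on all tuples from $\integ_\her^n$; in particular $\tau(f(\uX))\ge f_\star$. Since $\varphi(f)=\lim_i L_{r_i}(f)=\lim_i\inf Q_{r_i,M}=f_M$, we conclude $f_M=\tau(f(\uX))\ge f_\star$.

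I expect the main obstacle to be the passage to the limit functional $\varphi$: converting the single scalar inequality $L_r(M-\Omega_r)\ge0$ into moment bounds uniform in $r$, and then recognizing that $\varphi$ meets precisely the hypotheses of the tracial Carleman theorem. Once $\varphi$ is in hand, the appeal to Theorem \ref{t:posss}---trace positivity of $f-f_\star$ propagates from bounded to power-integrable tuples---is what links the constructed power-integrable tuple $\uX$ back to the infimum over bounded tuples that defines $f_\star$; feasibility and monotonicity, by contrast, are routine bookkeeping with the constraints of \eqref{eq:Q_rm}.
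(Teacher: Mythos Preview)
Your proposal is correct and follows essentially the same route as the paper: the same feasibility witness $L_0$, the same monotonicity via restriction (you make explicit that $\Omega_{r+1}-\Omega_r$ is a sum of hermitian squares, which the paper leaves implicit), the same moment bound $L(x_j^{2k})\le k!\,M$ feeding into iterated Cauchy--Schwarz (Hadwin's inequality in the paper), the same compactness extraction of a limit functional, and the same appeal to Theorem~\ref{t:carleman} and the equivalence (i)$\Leftrightarrow$(ii) of Theorem~\ref{t:posss}. The only cosmetic differences are that the paper invokes Banach--Alaoglu on rescaled Hankel matrices rather than a diagonal argument, and uses optimizers rather than near-optimizers of $Q_{r,M}$.
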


\begin{proof}
Feasibility of \eqref{eq:Q_rm} for $2r\ge \deg f$ and $M\ge n$ is clear
(e.g., $L(p)=p(0)$ for $p\in\rx_{2r}$).
If $L$ is feasible for $Q_{r,M}$ then its
restriction is feasible for $Q_{r',M}$ for $r'<r$.
Hence the sequence $(\inf Q_{r,M})_{r\geq d}$ is increasing.

Let $L$ be feasible for $Q_{r,M}$. Observe that for $k\le r$,
the values of $L(x_j^{2k})$ are bounded by the linear inequality,
\begin{equation}\label{e:QrMbd}
L(x_j^{2k})\leq k!M.
\end{equation}
Then Hadwin's noncommutative H\"older inequality for linear functionals on the free algebra (see \cite[Proof of Theorem 1.3]{Had}) implies a bound
\[
|L(x_{i_1}\cdots x_{i_\delta})|\leq
\prod_{j=1}^\delta \sqrt[2^\delta]{L(x_{i_j}^{2^\delta})}
\leq
\prod_{j=1}^\delta \sqrt[2^\delta] {(2^{\delta-1})!M} =: c_{\delta}
\]
for all $2^\delta\leq 2r$.
In particular, if $r\ge 2^{\deg f-1}$ then $L(f)\le s\cdot c_{\deg f}$,
where $s$ is the number of summands in $f$.

Hence $(\inf Q_{r,M})_r$ is an increasing function bounded from above,
whence $\inf Q_{r,M} \nearrow f_M$ as $r\to\infty$, for some $f_M$.
It remains to show $f_M\geq f_\star$.

To each $L:\rx\to\R$ we assign the infinite Hankel matrix
$\bfM(L)$ as in the proof of Lemma \ref{lem:2sdp}.
If $L$ acts only on $\rxd{2r}$ we extend it by $0$ to all of $\rx$.
We also scale each $L:\rx\to\R$ to $\check L:\rx\to\R$ by
$\check L(w) = \frac1{c_{|w|}}L(w)$
for a word $w\in\mx$.

Let $L^{(r)}$ be an optimizer of \eqref{eq:Q_rm}, and consider the sequence
$
(\bfM(\check L^{(r)}))_{r\in\N}.
$
Each entry in each infinite matrix is bounded by $1$ in absolute value,
so we may consider this a sequence in the unit ball $B_1$ of $\ell^\infty$. By the Banach-Alaoglu theorem \cite[Theorem III.2.9]{Bar}, $B_1$ is compact in the weak-$*$ topology of $\ell^\infty$. Hence there
is
an element $\bfM=\bfM(\check L)$ in $B_1$ and a subsequence $(\bfM(\check L^{(r_k)}))_{k\in\N}$ converging to $\bfM(\check L)$. In particular,
$\check L^{(r_k)}(w)\to \check L(w)$ as $k\to\infty$, for all $w\in\mx$.
Now define
\[
\hat L:\rx\to\R,\quad w\mapsto c_{|w|} \check L(w).
\]
Then $L^{(r_k)}|_{\rxd{\delta}}\to \hat L|_{\rxd{\delta}}$ as $k\to \infty$, for every $\delta\in\N$.
In particular, $\hat L$ is a $*$-functional, 
$\hat L(1)=1$, $\hat L(f)=f_M$, 
$\hat L(\soc{\rx})=\{0\}$ and $\hat L(\sos{\rx})=\R_{\ge0}$.

Let $m\in\N$ be arbitrary. Then for every $r_k\ge m$,
$$L^{(r_k)}(x_j^{2m})\le m! M$$
by \eqref{e:QrMbd}.  Consequently
$$\hat L(x_j^{2m})\le m! M.$$
Therefore $L$ is a \ncmeasure by Theorem \ref{t:carleman}. 
In particular, $f_M\ge f_\star$ since 
$f_\star=\inf_{(\cF,\tau), \uX\in\integ^n} \tau(f(\uX))$ by (i)$\Leftrightarrow$(ii) of Theorem \ref{t:posss}.
\end{proof}

\begin{proof}[Proof of Theorem \ref{t:lass}]
(ii)$\Rightarrow$(i) Let $(\cF,\tau)$ and $\uX\in\cF_\her^n$ be arbitrary. Then for every $r\in\N$,
$$\tau\left(\Omega_r(\uX)\right)
\le \sum_{j=1}^n\tau\left(\exp(X_j^2)\right)=:M<\infty.$$
By (ii), for every $\eps>0$ there exists $r\in\N$ such that
$$\tau\left(f(\uX)+ \ve \Omega_r(\uX)\right)\ge0.$$
Therefore for every $\ve>0$,
$$\tau(f(\uX))\ge -\ve M,$$
and so $\tau(f(\uX))\ge0$.

(i)$\Rightarrow$(ii)
Denote $f_\star=\inf_{(\cF,\tau), \uX} \tau(f(\uX))$.

First assume $f_\star>0$. Let $M > \max\{\frac{1}{f_\star},n\}$ be arbitrary.
By Lemmas \ref{l:strongdual} and \ref{l:upper} there exists $r_M>0$ such that
$\sup Q^\vee_{r_M,M}> f_\star-\frac1M$.
That is, there are $z_M\ge f_\star-\frac1M$, $\lambda_M\ge0$ and $q_M\in \Theta_{2r_M}^2$ such that
\begin{equation}\label{e:sdpsol}
f-z_M=q_M+\lambda_M\left(M-\Omega_{r_M}\right).
\end{equation}
Evaluating \eqref{e:sdpsol} at $\uX=0\in\R^n$ gives
$$
f(0)-f_\star+\frac1M
\ge f(0)-z_M
= q_M(0)+ \lambda_M (M-\Omega_{r_M}(0))
\ge\lambda_M (M-n),
$$
and therefore
\begin{equation}\label{e:lamest}
\lambda_M\le \frac{f(0)-f_\star+\frac1M}{M-n}.
\end{equation}
The right-hand side of \eqref{e:lamest} goes to $0$ as $M\to\infty$.
By \eqref{e:sdpsol},
$$f+\lambda_M\Omega_{r_M}=z_M+q_M+\lambda_M M \in \Theta_{2r_M}^2,$$
and $\lambda_M\to0$ as $M\to \infty$.
Therefore (ii) holds.

Now assume $f_\star=0$, and let $\ve>0$ be arbitrary.
By applying (i)$\Rightarrow$(ii) to the \nc polynomial $f+\frac{n\ve}{2}$ and $\frac{\ve}{2}>0$,
there exists $r\in\N$ such that
$\left(f+\frac{n\ve}{2}\right)+\frac{\ve}{2} \Omega_r\in \Theta_{2r}^2$.
But this \nc polynomial equals $f+\ve \Omega_r -\frac{\ve}{2} (\Omega_r-n)$,
so $f+\ve \Omega_r\in \Theta_{2r}^2$.
\end{proof}

\begin{rem}
Let $f\in\rxh$. For $r\in\N$ let
$$\ve_r= \inf\left\{\ve\in\R\colon f+\ve \Omega_r\in \Theta_{2r}^2\right\}.$$
Then $(\ve_r)_r$ is a decreasing sequence, each $\ve_r$ can be computed with a semidefinite program,
and $f$ is trace-positive if and only if $\inf_r\ve_r\le 0$ by Theorem \ref{t:lass}.
\end{rem}

\subsection{Sum-of-squares perturbations of the tracial Motzkin polynomial}

We illustrate Theorem \ref{t:lass} on the polynomial
$m=x_2x_1^4x_2+x_2^2x_1^2x_2^2-3x_2x_1^2x_2+1\in\bih$ from Example \ref{ex:motzkin}.
In fact,  we show in Example \ref{ex:lass} below that for every $\ve>0$ there is $r\in\N$ such that
$m+\frac{\ve}{r!}x_1^{2r}$ is cyclically equivalent to a sum of hermitian squares.
This in particular improves the approximation of the commutative Motzkin polynomial 
by sums of squares given in \cite[Example 3.5]{LN}.
We start with a technical lemma.

\begin{lem}\label{l:calc}
Let $r\in 4\N+1$ and $\ve\ge \frac{r!}{(r-1)^{r-1}}$. Then the polynomial
$$p(t)=-\left(\frac{t-r}{r-1}\right)^{\frac{r-1}{2}}$$
satisfies
$$2p(t)\le t-3 \quad\text{and}\quad t p(t)^2\le 1+\frac{\ve}{r!}t^r$$
for all $t\ge0$.
\end{lem}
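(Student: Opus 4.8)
The plan is to treat the two claimed inequalities separately, in each case reducing a one-variable polynomial inequality on $[0,\infty)$ to a single critical-point evaluation at $t=1$ (where $\frac{t-r}{r-1}=-1$). First comes the parity bookkeeping: since $r\equiv1\pmod4$, the exponent $m:=\frac{r-1}{2}$ is an \emph{even} positive integer, so $p$ is an honest polynomial and $q:=-p$ satisfies $q(t)=\left(\frac{t-r}{r-1}\right)^{m}\ge0$ for every real $t$; moreover $q(t)^2=\left(\frac{t-r}{r-1}\right)^{r-1}$ (here $r-1=2m$ is even) and $q'(t)=\frac12\left(\frac{t-r}{r-1}\right)^{m-1}$ with $m-1$ \emph{odd}.

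For $2p(t)\le t-3$, equivalently $h(t):=t-3+2q(t)\ge0$: since $q\ge0$ the inequality is obvious for $t\ge3$, so only $t\in[0,3]$ needs attention. Here I would use $h'(t)=1+\left(\frac{t-r}{r-1}\right)^{m-1}$; as $m-1$ is odd, the map $t\mapsto\left(\frac{t-r}{r-1}\right)^{m-1}$ is strictly increasing and equals $-1$ exactly at $t=1$, so $h$ is strictly decreasing on $(-\infty,1]$ and strictly increasing on $[1,\infty)$. Hence $h$ attains its global minimum $h(1)=1-3+2(-1)^m=0$ at $t=1$, so $h\ge0$ everywhere; for $r=5$ one even gets $h(t)=\frac18(t-1)^2$.

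For $t\,p(t)^2\le1+\frac{\ve}{r!}t^r$, observe that the left side does not involve $\ve$ and the right side is nondecreasing in $\ve$, so it suffices to take $\ve=\frac{r!}{(r-1)^{r-1}}$, that is, $\frac{\ve}{r!}=\frac1{(r-1)^{r-1}}$ --- which is also why the hypothesis is exactly this bound, since it equalizes the degree-$r$ coefficients of the two sides. The claim becomes $g(t):=1+\frac{t^r}{(r-1)^{r-1}}-t\,q(t)^2\ge0$, which I would split over $[0,r]$ and $[r,\infty)$. On $[0,r]$ one has $q(t)^2=\left(\frac{r-t}{r-1}\right)^{r-1}$ and $\frac{d}{dt}\left(t\,q(t)^2\right)=\left(\frac{r-t}{r-1}\right)^{r-2}\cdot\frac{r(1-t)}{r-1}$, which is $\ge0$ on $[0,1]$ and $\le0$ on $[1,r]$; thus $t\,q(t)^2$ is maximal at $t=1$ with value $1$, so $g(t)\ge\frac{t^r}{(r-1)^{r-1}}\ge0$ there. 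On $[r,\infty)$, from $0\le t-r\le t$ we get $t\,q(t)^2=t\left(\frac{t-r}{r-1}\right)^{r-1}\le t\left(\frac{t}{r-1}\right)^{r-1}=\frac{t^r}{(r-1)^{r-1}}$, whence $g(t)\ge1>0$.

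The one place demanding care is the parity bookkeeping --- tracking which of $m$, $m-1$, $r-1$, $r-2$ is even or odd, since the argument rests on $q\ge0$, on $h'$ changing sign exactly once, and on $\frac{d}{dt}(t\,q(t)^2)$ changing sign exactly once on $[0,r]$ --- together with the reduction to $\ve=\frac{r!}{(r-1)^{r-1}}$ so that the top-degree terms cancel correctly. Everything else is routine single-variable calculus, and the key simplification throughout is that both extremal problems are solved at $t=1$.
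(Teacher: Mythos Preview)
Your proof is correct and follows essentially the same approach as the paper: elementary one-variable calculus, with both inequalities becoming equalities at the critical point $t=1$. The organization differs slightly. For $2p(t)\le t-3$ the paper observes that $p$ is concave (since $\frac{r-1}{2}$ is even) with $p(1)=-1$, $p'(1)=\frac12$, so the tangent line $\frac12(t-3)$ at $t=1$ dominates $p$; your computation of $h'$ and its sign change at $t=1$ is the same fact unpacked. For the second inequality the paper substitutes $t\mapsto t/r$ to reduce to bounding $t(t-1)^{r-1}$ by $t^r$ plus its local maximum value at $t=1/r$, whereas you work directly with $t\,q(t)^2$ and split into $[0,r]$ and $[r,\infty)$; your case split is arguably more transparent, since it isolates the estimate $t\,q(t)^2\le 1$ on $[0,r]$ (maximum at $t=1$) and the trivial bound $t-r\le t$ on $[r,\infty)$ without any change of variables.
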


\begin{proof}
Observe that $p$ is concave on $\R$ (since $\frac{r-1}{2}$ is even), $p(1)=-1$ and $p'(1)=\frac12$.
Therefore $p(t)\le \frac12(t-3)$ for all $t\in\R$.

On $(0,\infty)$, the function $t\mapsto t(t-1)^{r-1}$ has precisely two local extrema,
a local maximum at $\frac{1}{r}$ and a local minimum at $1$. Therefore
$$t(t-1)^{r-1}-\frac{1}{r}\left(\frac{1}{r}-1\right)^{r-1}\le t^r$$
for all $t\ge0$.
Replacing $t$ with $\frac{t}{r}$ gives
$$
\left(\frac{t}{r}\right)\left(\left(\frac{t}{r}\right)-1\right)^{r-1}\le
\frac{1}{r}\left(\frac{1}{r}-1\right)^{r-1}+\left(\frac{t}{r}\right)^{r},
$$
and after multiplication by $\frac{r^r}{(r-1)^{r-1}}$ we obtain
$$
t\left(\frac{t-r}{r-1}\right)^{r-1}\le 1
+\frac{t^r}{(r-1)^{r-1}}.
$$
Therefore $t p(t)^2\le 1+\frac{\ve}{r!}t^r$ holds for all $t\ge0$.
\end{proof}

\begin{exa}\label{ex:lass}
Let $M=t_1^4t_2^2+t_1^2t_2^4-3t_1^2t_2^2+1\in\R[t_1,t_2]$ be the Motzkin polynomial.
If $r\in 4\N+1$ and $\ve\ge \frac{r!}{(r-1)^{r-1}}$,
then $M+\frac{\ve}{r!}t_1^{2r}$ is a sum of squares in $\R[t_1,t_2]$.
Note that $\frac{r!}{(r-1)^{r-1}}$ decays exponentially towards $0$ as $r\to\infty$.

Indeed, let $p$ be as in Lemma \ref{l:calc}.
The univariate polynomials $t_1^4-3t_1^2-2t_1^2p(t_1^2)$
and
$1+\frac{\ve}{r!}t_1^{2r}-t_1^2p(t_1^2)^2$ are nonnegative,
and therefore sums of squares in $\R[t_1]$.
Then
\begin{align*}
M+\frac{\ve}{r!}t_1^{2r}
&=t_1^2t_2^4+(t_1^4-3t_1^2)t_2^2+\left(1+\frac{\ve}{r!}t_1^{2r}\right) \\
&=\Big(t_1^4-3t_1^2-2t_1^2p(t_1^2)\Big)t_2^2+
t_1^2\Big(t_2^2+p(t_1^2)\Big)^2
+\left(1+\frac{\ve}{r!}t_1^{2r}-t_1^2p(t_1^2)^2\right)
\end{align*}
is a sum of (at most five) squares in $\R[t_1,t_2]$.

Let $m\in\bih$ be a cyclically sorted noncommutative lift of $M$.
As in the proof of Corollary \ref{c:cyc} we conclude that
$$m+\frac{\ve}{r!}x_1^{2r} \in \Theta_{2r}^2$$
for $r\in 4\N+1$ and $\ve\ge \frac{r!}{(r-1)^{r-1}}$.
\hfill $\square$
\end{exa}



\begin{thebibliography}{KK}

\bibitem[ANT19]{ANT}
V. Alekseev, T. Netzer, A. Thom:
{\it Quadratic modules, $C^*$-algebras, and free convexity},
Trans. Amer. Math. Soc. 372 (2019) 7525--7539.

\bibitem[Bar02]{Bar}
A. Barvinok:
{\it A course in convexity},
Graduate Studies in Mathematics 54, American Mathematical Society, Providence, RI, 2002.

\bibitem[Bei13]{Bei}
S. Beigi:
{\it Sandwiched R{\'e}nyi divergence satisfies data processing inequality},
J. Math. Phys. 54 (2013) 122202.

\bibitem[Beu07]{Beu07}
E. Beutner:
{\it On the closedness of the sum of closed convex cones in reflexive Banach spaces},
J. Convex Anal. 14 (2007) 99--102.

\bibitem[Bha97]{Bha}
R. Bhatia:
{\it Matrix analysis},
Graduate Texts in Mathematics 169, Springer-Verlag, New York, 1997.

\bibitem[BKP16]{BKP16}
S. Burgdorf, I. Klep, J. Povh:
{\it Optimization of polynomials in non-commuting variables}, 
SpringerBriefs in Mathematics, Springer, Cham, 2016.

\bibitem[Car10]{Car}
E. Carlen:
{\it Trace inequalities and quantum entropy: an introductory course},
Entropy and the quantum, 73--140, Contemp. Math. 529, Amer. Math. Soc., Providence, RI, 2010.

\bibitem[CMN11]{CMN}
J. Cimpri\v{c}, M. Marshall, T. Netzer:
{\it Closures of quadratic modules},
Israel J. Math. 183 (2011) 445--474.

\bibitem[Coh95]{Coh}
P. M. Cohn: 
{\it Skew fields. Theory of general division rings}, 
Encyclopedia of Mathematics and its Applications 57, Cambridge University Press, Cambridge, 1995.

\bibitem[CD08]{CD}
B. Collins, K. J. Dykema:
{\it A linearization of Connes' embedding problem},
New York J. Math. 14 (2008) 617--641.

\bibitem[CF98]{CF}
R. E. Curto, L. A. Fialkow:
{\it Flat extensions of positive moment matrices: recursively generated relations},
Mem. Amer. Math. Soc. 136 (1998) no. 648.

\bibitem[Dub75]{Dub}
M. Dubois-Violette:
{\it A generalization of the classical moment problem on $*$-algebras with applications to relativistic quantum theory. I},
Commun. Math. Phys. 43 (1975) 225--254.

\bibitem[Dyk94]{Dyk}
K. J. Dykema:
{\it Factoriality and Connes’ invariant $T(\cM)$ for free products of von Neumann algebras},
J. Reine Angew. Math. 450 (1994) 159--180.

\bibitem[FK86]{FK}
T. Fack, H. Kosaki:
{\it Generalized $s$-numbers of $\tau$-measurable operators},
Pacific J. Math. 123 (1986) 269--300.

\bibitem[FNT14]{FNT}
T. Fritz, T. Netzer, A. Thom:
{\it Can you compute the operator norm?},
Proc. Amer. Math. Soc. 142 (2014) 4265--4276. 

\bibitem[GdLL19]{GdLL}
S. Gribling, D. de Laat, M. Laurent:
{\it Lower bounds on matrix factorization ranks via noncommutative polynomial optimization},
Found. Comput. Math. 19 (2019) 1013--1070.

\bibitem[GKVVW16]{GKVW}
A. Grinshpan, D. S. Kaliuzhnyi-Verbovetskyi, V. Vinnikov, H. J. Woerdeman:
{\it Matrix-valued Hermitian Positivstellensatz, lurking contractions, and contractive determinantal representations of stable polynomials}, 
Operator theory, function spaces, and applications, 123--136,
Oper. Theory Adv. Appl. 255, Birkh\"auser/Springer, Cham, 2016. 

\bibitem[GS09]{GS}
A. Guionnet, D. Shlyakhtenko:
{\it Free diffusions and matrix models with strictly convex interaction},
Geom. Funct. Anal. 18 (2009) 1875--1916.

\bibitem[Had01]{Had}
D. Hadwin:
{\it A noncommutative moment problem},
Proc. Amer. Math. Soc. 129 (2001) 1785--1791.

\bibitem[Hel02]{Hel}
J. W. Helton:
{\it ``Positive'' noncommutative polynomials are sums of squares},
Ann. of Math. 156 (2002) 675--694.

\bibitem[HM04]{HM}
J. W. Helton, S. McCullough:
{\it A Positivstellensatz for non-commutative polynomials},
Trans. Amer. Math. Soc. 356 (2004) 3721--3737.

\bibitem[Ino76]{Ino}
A. Inoue:
{\it On a class of unbounded operator algebras II},
Pacific J. Math. 66 (1976) 411--431.

\bibitem[JNVWY]{JNVWY}
Z. Ji, A. Natarajan, T. Vidick, J. Wright, H. Yuen:
{\it MIP* = RE},
Communications of the ACM 64 (2021) 131--138; \texttt{arXiv:2001.04383}.

\bibitem[JZ13]{JZ}
M. Junge, Q. Zeng:
{\it Noncommutative Bennett and Rosenthal inequalities},
Ann. Probab. 41 (2013) 4287--4316. 

\bibitem[KMV22]{KMV}
I. Klep, V. Magron, J. Vol\v{c}i\v{c}:
{\it Optimization over trace polynomials},
Ann. Henri Poincar\'e
23 (2022) 67--100.

\bibitem[KPV17]{KPV}
I. Klep, J. E. Pascoe, J. Vol\v{c}i\v{c}:
{\it Regular and positive noncommutative rational functions},
J. Lond. Math. Soc. (2) 95 (2017) 613--632.

\bibitem[KPV21]{KPV1}
I. Klep, J. E. Pascoe, J. Vol\v{c}i\v{c}:
{\it Positive univariate trace polynomials},
J. Algebra 579 (2021) 303--317.

\bibitem[KS08]{KS}
I. Klep, M. Schweighofer:
{\it Connes' embedding conjecture and sums of hermitian squares},
Adv. Math. 217 (2008) 1816--1837.

\bibitem[KSV18]{KSV}
I. Klep, \v{S}. \v{S}penko, J. Vol\v{c}i\v{c}:
{\it Positive trace polynomials and the universal Procesi-Schacher conjecture},
Proc. Lond. Math. Soc. (3) 117 (2018) 1101--1134.

\bibitem[Kos92]{Kos}
H. Kosaki:
{\it An inequality of Araki-Lieb-Thirring (von Neumann algebra case)},
Proc. Amer. Math. Soc. 114 (1992) 477--481.

\bibitem[Las06]{Las06}
J. B. Lasserre:
{\it A sum of squares approximation of nonnegative polynomials},
SIAM J. Optim. 16 (2006) 751--765.

\bibitem[LN07]{LN}
J. B. Lasserre, T. Netzer:
{\it SOS approximations of nonnegative polynomials via simple high degree perturbations},
Math. Z. 256 (2007) 99--112.

\bibitem[Lau09]{Lau}
M. Laurent:
{\it Sums of squares, moment matrices and optimization},
in Emerging Applications of Algebraic Geometry, 157--270,
IMA Volumes in Mathematics and its Applications 149, Springer-Verlag Berlin, 2009.

\bibitem[LT76]{LT}
E. H. Lieb, W. E. Thirring:
{\it Inequalities for the moments of the eigenvalues of the Schrödinger Hamiltonian and their relation to Sobolev inequalities},
in Studies in Mathematical Physics, 269--303, Princeton University Press, 1976.

\bibitem[Mar08]{Mar}
M. Marshall:
{\it Positive polynomials and sums of squares},
Mathematical Surveys and Monographs 146, American Mathematical Society, 2008.

\bibitem[McC01]{McC}
S. McCullough:
{\it Factorization of operator-valued polynomials in several non-commuting variables},
Linear Algebra Appl. 326 (2001) 193--203.

\bibitem[MS17]{MS}
J. A. Mingo, R. Speicher:
{\it Free probability and random matrices},
Fields Institute Monographs 35, Springer, New York, 2017.

\bibitem[Nel74]{Nel}
E. Nelson:
{\it Notes on non-commutative integration},
J. Funct. Anal. 15 (1974) 103--116.

\bibitem[NT10]{NT}
T. Netzer, A. Thom:
{\it Tracial algebras and an embedding theorem},
J. Funct. Anal. 259 (2010) 2939--2960.

\bibitem[Oza13]{Oza}
N. Ozawa:
{\it About the Connes embedding conjecture: algebraic approaches},
Jpn. J. Math. 8 (2013) 147--183.

\bibitem[PKRR\textsuperscript{+}19]{PhysRev}
A. Pozas-Kerstjens, R. Rabelo, L. Rudnicki, R. Chaves, D. Cavalcanti, M. Navascu\'es, A. Ac\'{\i}n:
{\it Bounding the sets of classical and quantum correlations in networks},
Phys. Rev. Lett. 123 (2019) 140503.

\bibitem[PS76]{PS}
C. Procesi, M. Schacher: 
{\it A non-commutative real Nullstellensatz and Hilbert’s 17th problem},
Ann. of Math. 104 (1976) 395--406.

\bibitem[PV99]{PV}
M Putinar, F.-H. Vasilescu:
{\it Solving moment problems by dimensional extension},
Ann. of Math. (2) 149 (1999) 1087--1107.

\bibitem[Qua15]{Qua}
R. Quarez:
{\it Trace-positive non-commutative polynomials},
Proc. Amer. Math. Soc. 143 (2015) 3357--3370.

\bibitem[Rad08]{Rad}
F. R\u{a}dulescu:
{\it A non-commutative, analytic version of Hilbert's 17th problem in type II\textsubscript{1} von Neumann algebras},
Von Neumann algebras in Sibiu, 93--101,
Theta Ser. Adv. Math. 10, Theta, Bucharest, 2008.

\bibitem[Rez95]{Rez0}
B.~Reznick:
\emph{Uniform denominators in Hilbert's seventeenth problem},
Math.~Z.\ 220 (1995), 75--97.

\bibitem[Rez05]{Rez}
B. Reznick:
{\it On the absence of uniform denominators in Hilbert's 17th problem},
Proc. Amer. Math. Soc. 133 (2005) 2829--2834.

\bibitem[Sche06]{Sche0}
C. Scheiderer:
{\it Sums of squares on real algebraic surfaces},
Manuscripta Math. 119 (2006) 395--410.

\bibitem[Sche09]{Sche}
C. Scheiderer:
{\it Positivity and sums of squares: A guide to recent results},
in Emerging Applications of Algebraic Geometry, 271--324,
IMA Volumes in Mathematics and its Applications 149, Springer-Verlag Berlin, 2009.

\bibitem[Sche12]{Sche1}
C.~Scheiderer:
\emph{A Positivstellensatz for projective real varieties},
Manuscripta Math.\ 138 (2012), 73--88.

\bibitem[SW17]{SW}
C. Scheiderer, S. Wenzel:
{\it Polynomials nonnegative on the cylinder},
Contemp. Math. 697 (2017) 291--300.

\bibitem[Schm12]{Schm0}
K. Schm\"udgen:
{\it Unbounded self-adjoint operators on Hilbert space},
Springer-Verlag, New York, 2012.

\bibitem[Schm17]{Schm1}
K. Schm\"udgen:
{\it The moment problem},
Springer International Publishing AG, 2017.

\bibitem[SBT17]{SBT}
D. Sutter, M. Berta, M. Tomamichel:
{\it Multivariate trace inequalities},
Comm. Math. Phys. 352 (2017) 37--58. 

\bibitem[Tak03]{Tak2}
M. Takesaki:
{\it Theory of Operator Algebras II},
Encyclopaedia of Mathematical Sciences 125, Springer, 2003.

\bibitem[Voi94]{Voi}
D. V. Voiculescu:
{\it The analogues of entropy and of Fisher's information measure in free probability theory, II},
Invent. Math. 118 (1994) 411--440.

\bibitem[VDN92]{VDN}
D. V. Voiculescu, K. J. Dykema, A. Nica:
{\it Free random variables. 
A noncommutative probability approach to free products with applications to random matrices, operator algebras and harmonic analysis on free groups},
CRM Monograph Series 1, American Mathematical Society, Providence, RI, 1992.

\bibitem[Vol18]{Vol}
J. Vol\v{c}i\v{c}:
{\it Matrix coefficient realization theory of noncommutative rational functions},
J. Algebra 499 (2018) 397--437.

\end{thebibliography}
\end{document}